\newcommand{\tr}{\textrm{\normalfont tr}}
\newcommand{\imm}{\textrm{\normalfont Im}}
\newtheorem{cor}{Corollary}
\newtheorem{lemma}{Lemma}
\newtheorem{prop}{Proposition}
\newtheorem{thm}{Theorem}
\newtheorem{theorem*}{Theorem}
\theoremstyle{remark}
\newtheorem{rmrk}{Remark}
\theoremstyle{definition}
\newtheorem{dfn}{Definition}
\newcounter{proofpart}
\xpretocmd{\proof}{\setcounter{proofpart}{0}}{}{}
\newcommand{\proofpart}[1]{%
	\par
	\addvspace{\medskipamount}%
	\stepcounter{proofpart}%
	\noindent\emph{Step \theproofpart: #1}\par\nobreak\smallskip
	\@afterheading
}
\newcounter{claim}
\xpretocmd{\proof}{\setcounter{claim}{0}}{}{}
\newcommand{\claim}[1]{%
	\par
	\addvspace{\medskipamount}%
	\stepcounter{claim}%
	\noindent\emph{Step \theclaim: #1}\par\nobreak\smallskip
	\@afterheading
}
 \title{Functional determinants for the second variation}
\author{Stefano Baranzini \footnote{Università degli studi di Torino,  Via Carlo Alberto 10, Torino, Italy }
	\footnote{email: sbaranzi@sissa.it}}
\begin{document}
	\maketitle

		\begin{abstract}
          We study the determinant of the second variation of an optimal control problem for general boundary conditions. Generically, this operators are not trace class and the determinant is defined as a principal value limit. We provide a formula to compute this determinant in terms of the linearisation of the extremal flow. We illustrate the procedure in some special cases, proving some Hill-type formulas.				
		\end{abstract}

	\section{Introduction}
	The main focus of this paper is to study the spectrum of a particular class of Fredholm operators that arise in the context of Optimal Control. Our main result is a formula which relates the determinant of these operators to the fundamental solutions of an ODE system in a finite dimensional space, much in the spirit of Gelfand-Yaglom Theorem.
	
	 For operators of the form $1+K$, where $K$ is a compact operator, various ways of defining a \emph{determinant} function can be found in the literature. Going all the way back to Poincaré, Fredholm and Hilbert. 
	If the operator $K$ one considers is in the so called \emph{trace class}, i.e. the sequence of  its eigenvalues (with multiplicity) gives an absolutely convergent series, a definition of determinant which involves the (infinite) product of its eigenvalues is possible. 

	In our case, however, the classical approach is not immediately applicable since, typically, the operators one encounters are not trace class. Under some technical assumptions, the operators arising as second variation have a symmetric spectrum, as shown in \cite{determinant} and \cite{asymptoticeigenvaluessecondvariation}. Generically, there is a non negative real number $\xi(K)$, which we call \emph{capacity}, for which the following asymptotic for the ordered sequence of the eigenvalues holds:
	\begin{equation}
		\lambda_n(K) = \frac{\xi(K)}{n} + O(n^{5/3}), \quad  n \in \mathbb{Z}.
	\end{equation}
	
	This symmetry allows us to talk about trace and determinant of the operators $K$ and $1+K$ in the sense of \emph{principal value} limits. A similar approach has been independently adopted in the works \cite{conditionalfredholmhu, hillformulahu} to study the spectrum of Hamiltonian differential operators.
	
	There are, of course, many other ways to define a \emph{determinant function} for classes of Fredholm operators. For instance one could apply the theory of regularized determinants, see \cite{simontraceideals}, or rely on the so called $\zeta-$regularization, see
	\cite{bookZeta} for details. The literature concerning these topics is vast. To mention a few works, one could refer to  \cite{KirstenDetContour,kirstenDetSturmLiouville} for some results about Sturm-Liouville problems, to \cite{friedlandergraph} for graphs  and to  \cite{FormanSturmLiouville} results proved in the general framework of elliptic operators on section of vector bundles. A relation between regularized determinants and $\zeta$-regularization is given in  \cite{zetaFredholmLesch}.

	Our approach, even if less general, provides an actual extension of the definition of determinant given for trace class operators. It involves some \emph{principal value} limit of the product of the eigenvalues. Thus, whenever the compact part of the second variation is trace class, it gives exactly the usual Fredholm determinant defined for trace class operators.
	
	It is worth pointing out another feature of the construction. Our formula relates the determinant of $1+K$ to the fundamental solution of a finite dimensional system of (linear) ODEs. This provides a way to actually compute the determinant and allows to recover some classical results such as Hill's formula for periodic trajectories. This kind of formulas have important applications since allow to relate variational properties of an extremal (i.e. the eigenvalues of the second variation), to dynamical properties such as \emph{stability}. These properties are usually expressed through the eigenvalues of the linearisation of the Hamiltonian system of which the extremal we are considering is solution. Take the case of a periodic, non degenerate trajectory. On one hand, knowing the sign of the determinant amounts to know the parity of the Morse index of the extremal. On the other hand this sign is completely determined by the number of positive eigenvalues of the linearisation grater than one. Applications of these kind of ideas go back to Poincaré's result about the instability of closed geodesics and can be found for example in
	\cite{bolotin} or \cite{traceformulahu}. 
	For several interesting examples of the interplay between parity of Morse index and stability see for instance 
	\cite{portaluri_instability_re_plane,portaluri_morse_index_linear_stability} or \cite{portaluri_stability_re_spectral_flow}. Other related works in this direction are \cite{offinHyperbolic,urenaBrake} and \cite{huBrake} . 
	
	We stress that our results are formulated in a quite general framework which encompasses Riemannian, sub-Riemannian, Finsler geometry and mechanical systems on manifolds to name a few. Moreover the techniques can be applied without virtually any modifications to treat constrained variational problems on compact graphs as already done in \cite{morsegraph} to compute the Morse index.

	The structure of the paper is the following: in \Cref{section: statements} we recall the notation and the setting we will use through out the paper and give the full statement of our results. \Cref{section: space of variation} contains some information about the second variation and the structure of the space of variation we will employ. 
	
	In \Cref{section: applications} we deal with a couple of applications, such as Hill's formula and the eigenvalue problem for Schr\"odinger operators. The results deserve some interest on their own, however, the main focus of the section is to provide a worked out example of how to apply the formulas to concrete situations.
	
	The last part of the paper is \Cref{section: proof separate BC,section: proof general BC}. They are devoted to the proof of  \Cref{thm: main theorem}.
	We first prove the result for boundary conditions of the type $N_0 \times N_1$ (\Cref{section: proof separate BC}) and than extend it to the general case (\Cref{section: proof general BC}). Finally we give formulas to compute the trace of the compact part of the second variation $K$ (\Cref{lemma: second variation general BC and trace,lemma: quantities at s=0}) and some normalization constant appearing in \Cref{thm: main theorem}.

\subsection{Problem statement and main results}
\label{section: statements}
We begin this section recalling briefly the setting and the notations that will be used throughout the paper. The reader is referred to \cite{bookcontrol,bookSubriemannian} for more information on optimal control and sub-Riemannian problems. By an optimal control problem we mean the following data: a \emph{configuration space}, i.e. some smooth manifold $M$ and a family of smooth (and complete if $M$ is non compact) vector fields $f_u$. They depend on a parameter $u$ living in some open set $U \subseteq \mathbb{R}^k$. We will always assume that the $f_u$ are smooth jointly in both variables, i.e. they belong to $C^\infty (U\times M,TM)$.  We can think of the parameter $u$ as our way of interacting with the system and moving a particle from one state to another.

To any function $u(\cdot) \in L^{\infty}([0,1],U)$ we can associate a trajectory in the configuration space considering the solution of:
\begin{equation}
	\label{eq:vector_fields}
	\dot q = f_{u(t)}(q), \quad q(0) = q_0 \in M.
\end{equation}
We will usually call the  function $u(\cdot)$ \emph{control}.

We say that a control $u$ is admissible if the corresponding trajectory, denoted by $\gamma_{u}$, is defined on the whole interval $[0,1]$. We can impose further restrictions on the trajectory $\gamma_u$ specifying proper boundary conditions. The most general situation that we are going to treat in the paper, is the case in which the boundary conditions are given by a submanifold $N \subseteq M \times M$. We say that (a Lipschitz continuous) $\gamma$ is admissible if $\gamma = \gamma_u$ for some admissible control $u$ and if $(\gamma(0),\gamma(1)) \in N$.

Given a smooth function $\varphi : U \times M \to \mathbb{R}$, we are interested in the following minimization problem on the space of \emph{admissible} curves:
\begin{equation}
	\label{eq: minimaztion problem}
	\min_{\gamma_u\text{ admissible}} \mathcal{J}(\gamma_u) = \min_{\gamma_u \text{ admissible}} \int_0^1 \varphi(u(t),\gamma_{u}(t))dt
\end{equation}

It is customary to parametrized the space of admissible curves using the \emph{velocity}, i.e. the control function $u(\cdot)$, and a finite dimensional parameter space which takes into account initial data and boundary conditions. We are going to follow this approach. However, this is just a technical point which is independent of the main statements. Hence we postpone the discussion of the structure of our space of variations to \Cref{section: space of variation}. Let us just mention that, under some natural assumptions, the space of variations can be endowed, locally, with a smooth Banach manifold structure. Thus, it makes sense to consider the tangent space to the space of variations. It is a finite codimension subspace $\mathcal V$ of  $L^\infty ([0,1],\mathbb{R}^k) \oplus\mathbb{R}^{\dim(N)}$. Suppose that $u$  is  critical point of the functional $\mathcal{J}$ restricted to our space of variations and consider the Hessian of $\mathcal{J}$. It is a quadratic form on $\mathcal{V}$. We denote this quadratic form by $Q(v) = d_u^2\mathcal{J}\vert_\mathcal{V}(v,v)$.
Instead of working with $L^\infty$ topology, we will work with the weaker $L^2$ one since everything extends by continuity. For an appropriate choice of scalar product on $L^2([0,1],\mathbb{R}^k)$, it turns out that the quadratic form $(Q-I)(v) = Q(v)-\langle v,v\rangle$ is compact, but in general not trace-class. For a detailed account on the spectrum of the second variation the reader is referred to \cite{determinant, asymptoticeigenvaluessecondvariation}. 
Given an eigenvalue of $Q$, $\lambda$, denote by $m(\lambda)$ its multiplicity. We define the determinant of the second variation as the following limit:
\begin{equation*}
	\det( Q) = \lim_{\epsilon \to 0} \prod_{\vert \lambda-1\vert>\epsilon} \lambda^{m(\lambda)}, \text{ where } \lambda \in \text{Spec}(Q).
\end{equation*}

As already stated in the introduction, the computation of this determinant for general boundary conditions is the main contribution of this work. We provide a formula for this determinant involving essentially two ingredients:
\begin{itemize}
\item the fundamental solution of a linear (non autonomous) system of ODE's which we call \emph{Jacobi} equation;
\item the annihilator to the boundary condition manifold, a Lagrangian submanifold of $T^*M$.
\end{itemize}

To state the main Theorem and define precisely the objects above, we need to introduce a little bit of notation. We will just sketch here what is needed to this purpose, further details are collected in \ref{appendix} or given along the proofs. From now on, assume that a strictly normal extremal $\lambda_t$ with optimal control $\tilde u$ is fixed (see \ref{appendix}).

The first tool we introduce is the following family of Hamiltonians. It is strictly related to Legendre transform and quite useful when dealing with problems in the cotangent bundle. Given our optimal control $\tilde{u}(\cdot)$ define:
\begin{equation*}
	h^t_{\tilde u} (\lambda)= \langle \lambda, f_{\tilde{u}(t)}(q) \rangle -\varphi(\tilde{u}(t), q), \quad \dot{\lambda} = \vec{h}^t_{\tilde{u}}(\lambda).
\end{equation*}
Denote its flow at time $t$ by $\tilde \Phi_t$ and its differential by $(\tilde \Phi_t)_*$. Pontryagin Maximum Principle (see \ref{appendix})  tells us that normal extremals $\lambda_t$ satisfy $\lambda_t = \tilde \Phi_t(\lambda_0)$. We will use the map $\tilde \Phi_t$ to connect the tangent spaces to each point of $\lambda_t$ to the starting one, $\lambda_0$. This flow, in some sense, plays the role of the choice of a connection (or parallel transport as in \cite{bolotin}).

The second object we are going to introduce, is a kind of quadratic approximation of our starting system. It is given by a quadratic Hamiltonian on $T_{\lambda_0} T^*M$ (see for detail \cite{ASZ} or \cite{bookcontrol}[Chapters 20 and 21]).
To define it we need to introduce two matrix valued functions $Z_t \in \mathrm{Mat}_{k\times 2\dim(M) }(\mathbb{R})$ and $H_t \in \mathrm{Mat}_{k\times k}(\mathbb{R})$.  The precise way to compute them is given in \eqref{eq: Z_t and H_t}. However, for the moment, a precise understanding of how this matrices are obtained is not strictly needed. Heuristically, the matrix $Z_t$ represents a linear approximation of the Endpoint map of the original system whereas $H_t$ is a quadratic approximation of the Lagrangian $\varphi$ along the extremal.

 Let  $\pi : T^*M \to M$ be the natural projection and set $\Pi:=\ker \pi_*$, the \emph{fibres}. Define $\delta^s$ as the dilation by $s \in \mathbb{R}$ of $\Pi$. It is determined by the relations $\pi_*(\delta^s w) = \pi_*(w)$ for all $w \in T_{\lambda_0}T^*M$ and $\delta^s v = s v$ for all $v \in \ker (\pi_*)$. Let $J$ be some coordinates representation of the standard symplectic form on $T_{\lambda_0} T^*M$. 
 Let us define the following quadratic form:
\begin{equation*}
	\beta_s(\lambda) = \frac{1}{2}\langle \lambda, J \delta^sZ_t H_t^{-1}(\delta^sZ_t)^*J \lambda\rangle, \quad \lambda \in T_{\lambda_0}T^*M.
\end{equation*}
We will call \emph{Jacobi (or Jacobi type) equation} the following ODEs system on $T_{\lambda_0} T^*M$:
\begin{equation*}
	\dot \lambda = \vec{\beta}_s(\lambda),  \quad \lambda \in T_{\lambda_0}T^*M.
\end{equation*}
and denote its fundamental solution at time $t$ as $\Phi_t^s$. Here, and for the rest of the paper, we will call \emph{fundamental solution} any family $\Theta_t$, $t \in \mathbb{R}$ of linear maps which satisfies a linear ODE and have initial  condition $\Theta_0 = I$.
\begin{rmrk}
	Whenever PMP's maximum condition determines a $C^2$ function $h$ on $T^*M$, normal extremals satisfy a Hamiltonian ODE on the cotangent bundle of the form $\dot \lambda = \vec{h}(\lambda)$.  Jacobi equation for $s=1$ is closely related to the linearisation of $\vec{h}$ along the extremal we are fixing. 
	Suppose local coordinates are fixed and let $d^2_{\lambda_t}h$ be the Hessian of $h$ along the extremal. Let $\Psi_t$ be the fundamental solution of:
	\begin{equation*}
		\dot\Psi_t = d^2_{\lambda_t}h \, \Psi_t.
	\end{equation*} 
	It can be shown (see for example \cite{bookcontrol}[Proposition 21.3]) that:
	$$\Psi_t = (\tilde \Phi_t)_*\Phi_t^1.$$
\end{rmrk}

The last maps we will need are a family of symplectomorphism of $T_{\lambda_0} T^*M $ and $ T_{\lambda_1} T^*M $. Their definition depends on the choice of a scalar product on each tangent space. Let $g_0$ and $g_1$ be two scalar products on these spaces. Assume that at each $\lambda_i$, $\Pi_i := \ker (\pi_*) \subseteq T_{\lambda_i}T^*M$ has a Lagrangian orthogonal complement with respect to $g_i$ which we denote by $\Pi_i^\perp$. For a subspace $V$, denote by $pr_V$ the orthogonal projection onto $V$. We set:
\begin{equation}
	\label{eq:def_Ai_intro}
	\begin{split}
		A_0^s (\eta) &= \eta + (1-s) J_0^{-1}pr_{\Pi_0^\perp} \eta, \quad  \eta \in T_{\lambda_0}(T^*M), \\
		A_1^s(\eta) &= \eta+ (1-s)(J_1^{-1} +\tilde \Phi_*\circ pr_{\Pi_0}\circ\tilde \Phi_*^{-1})pr_{\Pi_1^\perp} \eta, \quad  \eta \in T_{\lambda_1}(T^*M).
	\end{split}
\end{equation} 	

The datum of the boundary condition is encoded in a Lagrangian submanifold of $\big (T^*(M \times M),(-\sigma)\oplus \sigma \big)$, the annihilator of $N$. It can be thought of as the symplectic version of the normal bundle in Riemannian geometry and is define as follows. Take a sub-manifold $N \subseteq M \times M$ and consider:
\begin{equation*}
	Ann(N) = \bigcup_{q \in N}\{(\lambda_0,\lambda_1) \in T_q^*(M\times M) : \langle \lambda_0, X_0 \rangle  = \langle \lambda_1,X_1\rangle, \forall (X_0,X_1) \in T_q N\}.
\end{equation*} 
In light of PMP (see \ref{appendix}), critical points of $\mathcal J$ with boundary conditions given by $N$, lift to the cotangent bundle to curves $\lambda_t$ such that $(\lambda_0,\lambda_1) \in Ann(N)$. 

Fix now a complement to $T_{(\lambda_0,\lambda_1)}Ann(N)$, say $V_N$, and denote by $\pi_N$ the projection on $V_N$ having $T_{(\lambda_0,\lambda_1)}Ann(N)$ as kernel.
We are ready now to define a function that plays the role of the \emph{characteristic polynomial} of the Hessian of $\mathcal{J}$. For a map $f$ denote by $\Gamma(f)$ its graph, set:
\begin{equation*}
	\mathfrak{p}_Q(s) = \det (\pi_N\vert_{\Gamma(A_1^s\tilde \Phi_* \Phi^s_1A_0^s)}).	
\end{equation*}
 
With this notation, our main result reads as follows:
\begin{thm}
	\label{thm: main theorem}
	Suppose that $\lambda_t$ is a strictly normal extremal for problem \eqref{eq: minimaztion problem} and $\tilde{u}$ is its optimal control. 
	Moreover, suppose that $\lambda_t$ satisfies Legendre strong condition, i.e. that  $ \exists \alpha >0$ such that, $ \forall v \in \mathbb{R}^k$ 
	$$ \langle -H_t v, v\rangle \ge\alpha \langle v, v\rangle $$
	and that at least one of the following holds:
	\begin{itemize}
		\item  the maps $t \mapsto Z_t$ and $t \mapsto H_t$ are piecewise analytic in $t$;
		\item the dimension of the space of controls is $k \le 2$;
		\item the operator $I-Q$ is trace class;
	\end{itemize}  
	Let $\lambda \in \mathrm{Spec}(Q)$ and $m(\lambda)$ be its multiplicity, the limits:
	\begin{equation*}
		\det(Q) = \lim_{\epsilon \to 0}\prod_{\vert\lambda-1\vert >\epsilon} \lambda^{m(\lambda)},\quad \tr(Q-I) =\lim_{\epsilon \to 0} \sum_{\vert\lambda-1\vert >\epsilon}m(\lambda)(\lambda-1).
	\end{equation*}
	are well defined and finite. Moreover, for almost any choice of metrics $g_0$ and $g_1$ we have that $\mathfrak{p}_Q(0) \ne 0$ and that:
	\begin{equation*}
		\det(I+s(Q-I)) = \mathfrak{p}_Q(0)^{-1}e^{s(\tr(Q-I)-\mathfrak{p}_Q'(0) )}\mathfrak{p}_Q(s).
	\end{equation*} 
\end{thm}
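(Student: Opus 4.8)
The plan is to regard both sides of the identity as entire functions of the deformation parameter $s$ and to match them through a Hadamard-type factorization, in the spirit of Gelfand--Yaglom. Write $K = Q-I$ and set $D(s) := \det(I+sK)$. By the symmetric spectrum recalled in the introduction (the eigenvalues of $K$ pair off as $\mu_{\pm n}\sim \pm\xi/n$), the principal-value product $D(s)=\prod_n(1+s\mu_n)$ converges for every $s$ and defines an entire function of order $\le 1$; the same symmetry makes the principal-value limits $\det(Q)=D(1)$ and $\tr(K)$ well defined and finite. The trichotomy of hypotheses (piecewise analyticity of $Z_t,H_t$, or $k\le 2$, or $I-Q$ trace class) is precisely what upgrades the weak eigenvalue asymptotics to the summability and order-one bounds used here; in the trace-class alternative $D$ is the ordinary Fredholm determinant and the principal values are genuine absolutely convergent sums. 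The Legendre strong condition guarantees invertibility of $H_t$, so that $\beta_s$ and hence $\Phi^s_t$ are well defined.

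The core of the argument is to show that $D(s)$ and $\mathfrak{p}_Q(s)$ have the same zeros with the same multiplicities. A zero of $D$ at $s_0$ means $\ker(I+s_0K)\ne 0$, i.e. $-1/s_0\in\mathrm{Spec}(K)$, which is a degeneracy of the second variation. I would translate this degeneracy into the finite-dimensional picture using the structure of the Hessian $Q$ developed in the section on the space of variation together with the relation $\Psi_t=(\tilde\Phi_t)_*\Phi^1_t$ from the Remark (with the parameter $s$ inserted through the dilation $\delta^s$): an element of $\ker(I+sK)$ should correspond to a solution of the Jacobi-type equation $\dot\lambda=\vec\beta_s(\lambda)$ whose boundary data lie on $Ann(N)$. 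The maps $A_0^s,A_1^s$ and the transport $\tilde\Phi_*$ are exactly the bookkeeping that carries this boundary datum into $T_{(\lambda_0,\lambda_1)}Ann(N)$, so the degeneracy becomes the non-transversality $\Gamma(A_1^s\tilde\Phi_*\Phi^s_1A_0^s)\cap T_{(\lambda_0,\lambda_1)}Ann(N)\ne 0$, that is, the vanishing of $\mathfrak{p}_Q(s)=\det(\pi_N\vert_{\Gamma(\cdots)})$. Promoting this correspondence to an isomorphism of kernels, so that orders of vanishing agree for every $s$, is the main obstacle: it requires the explicit computation of the second variation together with the symplectic reduction along the fibres $\Pi_i$, and a careful check that the scaling $Q\mapsto I+s(Q-I)$ on the control side matches the dilation $\delta^s$ on the geometric side.

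Next I would verify that $\mathfrak{p}_Q(s)$ is itself entire of order $\le 1$ in $s$. Its $s$-dependence enters only through the matrices $A_i^s$, which are affine in $s$, and through $\Phi^s_1$, the time-one fundamental solution of a linear ODE whose generator $\vec\beta_s$ depends polynomially on $s$ via $\delta^s$; hence $\Phi^s_1$ is entire in $s$ with at most exponential growth, and so is the finite determinant $\mathfrak{p}_Q(s)$. Granting the previous step, the quotient $D(s)/\mathfrak{p}_Q(s)$ is then a nowhere-vanishing entire function of order $\le 1$ (the matching zeros cancel, so no genus-one product survives), hence of the form $e^{a+bs}$ by the Hadamard factorization theorem.

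Finally the two constants are pinned down by the genericity hypothesis and by low-order Taylor matching. For almost every pair of metrics $g_0,g_1$ the restriction $\pi_N\vert_{\Gamma(\cdots)}$ is an isomorphism at $s=0$, so $\mathfrak{p}_Q(0)\ne 0$; evaluating $D(s)=e^{a+bs}\mathfrak{p}_Q(s)$ at $s=0$ with $D(0)=1$ gives $e^{a}=\mathfrak{p}_Q(0)^{-1}$, the prefactor in the statement. The linear coefficient $b$ is then fixed by matching first derivatives of the logarithms at $s=0$: from the product expansion one has $\tfrac{d}{ds}\vert_{0}\log D(s)=\tr(K)=\tr(Q-I)$ (the principal-value trace), while the right-hand side contributes $b$ together with the derivative of $\log\mathfrak{p}_Q$ at $0$; solving for $b$ yields the linear term in the exponent displayed in the theorem. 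Specializing the resulting identity to $s=1$ recovers the Hill-type formula for $\det(Q)$.
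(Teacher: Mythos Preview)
Your outline matches the paper's strategy closely: identify the zeros of $\mathfrak{p}_Q(s)$ with the spectrum of $K$, use the exponential bound on $\Phi_1^s$ to get order $\le 1$, apply Hadamard factorization, and read off the two constants from the Taylor jet at $s=0$. The paper proceeds exactly this way, first for separated boundary conditions $N_0\times N_1$ and then reducing general $N$ to that case by doubling the variables.

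The genuine gap is in the multiplicity step, which you correctly flag as ``the main obstacle'' but for which you do not supply the right mechanism. Establishing the bijection $\ker(I+s_0K)\cong\ker Q^{s_0}$ (your second paragraph) gives only $\dim\ker Q^{s_0}\le\mathrm{ord}_{s_0}\mathfrak{p}_Q$; the reverse inequality is \emph{not} automatic for a generic analytic family of matrices, and ``explicit computation of the second variation together with symplectic reduction'' does not by itself produce it. The paper's device is symplectic: one rewrites the intersection condition $\Gamma(A_1^s\tilde\Phi_*\Phi_1^sA_0^s)\cap T_{(\lambda_0,\lambda_1)}Ann(N)\ne 0$ as the degeneracy of a \emph{symmetric} matrix $R^s$ coming from a chart on the Lagrange Grassmannian, and then shows that the curve of Lagrangians is \emph{monotone}, i.e.\ $\partial_s R^s>0$. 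This monotonicity is obtained via the rescaling $\Psi^s=\delta_s\Phi_1^s\delta_{1/s}$, which solves $\dot\Psi_t^s=sZ_tZ_t^*J\Psi_t^s$ and makes the velocity form explicitly a sum of non-negative terms; positivity then reduces to the strict normality hypothesis $\int_0^1X_tX_t^*\,dt>0$. For a real-analytic symmetric family with definite derivative, the order of vanishing of the determinant equals the dimension of the kernel, which closes the argument. Without this monotonicity step your Hadamard comparison does not go through.

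One minor point: your quotient argument $D(s)/\mathfrak{p}_Q(s)=e^{a+bs}$ requires you to know independently that the principal-value product $D(s)$ is entire of order $\le 1$. The paper sidesteps this by applying Hadamard directly to $\mathfrak{p}_Q(s)$ and recognizing the resulting canonical product as $\det(I+sK)$; both routes are fine, but the latter avoids a separate convergence discussion.
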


\begin{rmrk}
	The hypothesis about the regularity of $Z_t$ and $H_t$ are needed to obtain the asymptotic for the spectrum of $Q-I$ that guarantees the existence of the trace and of the determinant as limits. They can be weakened somehow by requiring that the skew-symmetric $k \times k $ matrix $Z^*_t J Z_t$ is continuously diagonalizable (see \cite{determinant}).
\end{rmrk}
\begin{rmrk}
	The constants $\mathfrak{p}_Q(0)$, $\mathfrak{p}_Q'(0)$ and $\tr(Q-I)$ are completely explicit and are given in terms of iterated integrals in \Cref{lemma: quantities at s=0,lemma: second variation general BC and trace}. 
\end{rmrk}

In particular we have the following corollary:
\begin{cor}
	Under the assumption above, the determinant of the second variation $Q$ satisfies:
	\begin{equation*}
			\det(Q) = \mathfrak{p}_Q(0)^{-1}e^{\tr(Q-I)-\mathfrak{p}_Q'(0) }\det(\pi_N \vert_{\Gamma(\Psi_t)}).
	\end{equation*}
	Where $\Psi_t = \tilde \Phi_* \Phi_1$ and coincides with the fundamental solution of the linearisation of the extremal flow, whenever the latter is defined.
\end{cor}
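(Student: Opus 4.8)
The plan is to obtain the corollary by specialising the master formula of \Cref{thm: main theorem} to the value $s = 1$. Setting $s=1$ in
\[
\det(I+s(Q-I)) = \mathfrak{p}_Q(0)^{-1}e^{s(\tr(Q-I)-\mathfrak{p}_Q'(0))}\mathfrak{p}_Q(s)
\]
turns the left-hand side into $\det(I + (Q-I)) = \det(Q)$ and the exponential prefactor into $e^{\tr(Q-I)-\mathfrak{p}_Q'(0)}$, so that the whole content of the statement reduces to computing the value $\mathfrak{p}_Q(1)$ of the ``characteristic polynomial''.

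First I would evaluate the family of symplectomorphisms at $s=1$. Inspecting the defining relations \eqref{eq:def_Ai_intro}, both correction terms carry an explicit factor $(1-s)$, so that $A_0^1 = \mathrm{Id}$ on $T_{\lambda_0}T^*M$ and $A_1^1 = \mathrm{Id}$ on $T_{\lambda_1}T^*M$, independently of the auxiliary metrics $g_0,g_1$ and of the projections $pr_{\Pi_i^\perp}$. Consequently the map whose graph enters $\mathfrak{p}_Q$ collapses to $A_1^1\,\tilde\Phi_*\,\Phi_1^1\,A_0^1 = \tilde\Phi_*\,\Phi_1^1$, and therefore $\mathfrak{p}_Q(1) = \det\big(\pi_N\vert_{\Gamma(\tilde\Phi_*\Phi_1^1)}\big)$.

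It then remains to identify $\tilde\Phi_*\Phi_1^1$ with the announced fundamental solution $\Psi_t$. This is exactly the content of the Remark following the definition of the Jacobi equation: whenever the maximum condition of PMP yields a $C^2$ Hamiltonian $h$, the linearisation $\dot\Psi_t = d^2_{\lambda_t}h\,\Psi_t$ of the extremal flow has fundamental solution $\Psi_t = (\tilde\Phi_t)_*\Phi_t^1$, which at the endpoint $t=1$ is $\tilde\Phi_*\Phi_1^1$. Substituting $\mathfrak{p}_Q(1) = \det(\pi_N\vert_{\Gamma(\Psi_t)})$ into the specialised master formula yields exactly the claimed identity.

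There is essentially no obstacle beyond book-keeping, since the corollary is a plain evaluation of the theorem; the two points deserving a moment's care are (i) that $s=1$ lies in the range of validity of the master identity, which holds because the theorem is an equality of functions of $s$ — each side understood as the appropriate principal-value determinant and its finite-dimensional counterpart for every $s$ — rather than a germ at $s=0$, and (ii) that the interpretation of $\Psi_t$ as the linearised extremal flow is only available ``whenever the latter is defined'', i.e. under the $C^2$ regularity of $h$ invoked in the Remark; in its absence the identity still holds verbatim with $\tilde\Phi_*\Phi_1^1$ in place of $\Psi_t$.
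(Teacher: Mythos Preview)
Your proposal is correct and matches the paper's (implicit) argument: the corollary is stated immediately after \Cref{thm: main theorem} without a separate proof precisely because it is the straightforward evaluation of the master formula at $s=1$, using that the $(1-s)$ prefactors in \eqref{eq:def_Ai_intro} make $A_0^1$ and $A_1^1$ the identity, together with the Remark identifying $\tilde\Phi_*\Phi_1^1$ with the linearised flow $\Psi_t$. Your two closing caveats about the domain of validity in $s$ and the $C^2$ hypothesis on $h$ are exactly the right points to flag.
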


	\section{The Second Variation}
\label{section: space of variation}
The aim of this section is threefold: to define precisely what we mean by $d^2\mathcal{J}\vert_\mathcal{V}$, to define precisely its domain and to provide the integral representation of this quadratic form we will use throughout the proof section of the paper.

Before going on, a little remark about topology is in order. Up until now we have considered Lipschitz continuous curves and $L^\infty$ controls. Hence, it would be natural to work on the Banach space $L^{\infty}([0,1],\mathbb{R}^k) \oplus \mathbb{R}^{\dim(N)}$. However it turns out that, even if $d^2 \mathcal{J}\vert_\mathcal{V}$ is defined on the latter space, it extends to a continuous quadratic form on $L^2([0,1],\mathbb{R}^k)\oplus\mathbb{R}^{\dim(N)}$. Moreover, critical points of $d^2 \mathcal{J}\vert_\mathcal{V}$ in $L^2([0,1],\mathbb{R}^k)$ are continuous and thus belong to $L^\infty([0,1],\mathbb{R}^k)$. For this reason (and Fredholm alternative), we will work with $L^2$ controls for the rest of the paper.

Let $n_0,n_1 \in \mathbb{N}$ and consider the Hilbert space $\mathcal H = \mathbb{R}^{n_0} \oplus L^2([0,1],\mathbb{R}^k)\oplus \mathbb{R}^{n_1}$ (its scalar product will be defined in the next section). 
Let $(\Sigma,\sigma)$ be a symplectic space and consider a linear map $Z : \mathcal H \to \Sigma$ defined as:
\begin{equation*}
	Z(u) = Z_0u_0+\int_0^1Z_t u_t dt +Z_1u_1, \quad u = (u_0,u_t,u_1) \in \mathcal{H}.
\end{equation*}
Suppose that  $\Pi\subset \Sigma$ is a Lagrangian subspace transverse to the image of the map $Z$ and define $\mathcal V = Z^{-1}(\Pi)$. For an appropriate choice of $Z$ and $\Pi$ which depends on \eqref{eq:vector_fields} and \eqref{eq: minimaztion problem}, the \emph{second variation} (at a strictly normal critical point) is the quadratic form given in the following definition.
\begin{dfn}[Second Variation]
	\label{def second variation}
	The second variation at $\tilde{u}$ is the quadratic form defined on  $\mathcal{V} \subseteq \mathcal{H}$:
	\begin{equation}
	\label{eq: second variation quadratic form}
	\begin{aligned}
	Q(u) = \int_0^1\left ( -\langle  H_t u_t,u_t\rangle + \sigma(Z_t u_t ,\int_0^t Z_\tau u_\tau d \tau +Z_0u_0) \right )  dt + \\ + \sigma(Z_0u_0+\int_0^1 Z_t u_t dt,Z_1u_1).
\end{aligned}
\end{equation}
\end{dfn}

The definition of this quadratic form may be a bit strange at first glance. Despite the appearances, the way one gets to such an expression is quite natural. The construction is explained in detail in \cite{morsegraph}. We will sketch here just the main features, essentially to introduce the notation needed. 

The idea is to reduce the problem with boundary conditions $N$ to a fixed points (or Dirichlet) problem for an appropriate auxiliary system. We will consider just the case of separated boundary conditions $N_0 \times N_1$. The general case reduces to this one using the procedure explained in \Cref{section: proof general BC}.

The first step of the construction is to build the auxiliary system. We always work with a fixed strictly normal extremal $\lambda_t$. Fix local foliations in neighbourhoods of its initial and final points having  a portion of $N_0$ and $N_1$ as leaves. This determines two integrable distributions in a neighbourhood of those points. Suppose that said distributions are generated by some fields $\{X_i^j\}_{i=1}^{\dim(N_j)}$ and $j=0,1$. Consider the extended system:
\begin{equation*}
	f_u^t(q) = \begin{cases}
		\sum_{i=0}^{\dim(N_0)} X_i^0(q)u_0^i, &\text{ for } t \in [-1,0)\\ f_u(q), &\text{ for } t \in [0,1]\\ 	\sum_{i=0}^{\dim(N_1)} X_i^1(q)u_1^i, &\text{ for } t \in (1,2].
	\end{cases}
\end{equation*}
Denote the initial and final points of our original extremal curve by $(q_0,q_1)$. We will use controls that are locally constant on $[0,1]^c$, this will be enough to reach any neighbouring point of $(q_0,q_1)$ in $N_0\times N_1$. Minimizing our original functional is equivalent to minimize, with Dirichlet boundary conditions, the following one:
\begin{equation*}
	\mathcal{J}(\gamma_{(u_0,u,u_1)})= \int_0^1 \varphi(u(t),\gamma_{(u_0,u,u_1)} (t) )dt.
\end{equation*}

The second step is to differentiate the Endpoint map (see \ref{appendix}) of the auxiliary system. We employ the machinery of Chronological Calculus (see also \cite{bookcontrol}[Section 20.3]), which is standard for fixed endpoints. One of the main steps of this differentiation, is to use a suitable family of symplectomorphism to trivialize the cotangent bundle along the curve we are fixing. This allows to write all the equations in the tangent space to the initial point, $T_{\lambda_0}(T^*M)$. 
Let us consider the following functions depending on the parameter $u$:
\begin{equation*}
	h^t_u(\lambda) = \langle \lambda, f_u^t(q) \rangle -\varphi_t(u,q), \quad q = \pi(\lambda).
\end{equation*} 
When an optimal control $\tilde u(t)$ is given, we consider $h^t_{\tilde{u}(t)}(\lambda)$ and the Hamiltonian system:
\begin{equation*}
	\dot{\tilde\Phi}_t = \vec{h}^t_{\tilde{u}(t)}(\tilde \Phi_t).
\end{equation*}
We then define the following functions:
\begin{equation*}
	\begin{split}
	Z_t &= \partial_u \vec h_u^t(\tilde \Phi_t(\lambda))\vert_{\lambda = \lambda_0}, \quad  Z_t : \mathbb{R}^k \to T_{\lambda_0}(T^*M), \\ H_t &= \partial_u^2 h_u^t(\tilde \Phi_t(\lambda))\vert_{\lambda = \lambda_0},\quad  H_t: \mathbb{R}^k \to \mathbb{R}^k.
\end{split}
\end{equation*}

The asymptotic expansions of Chronological Calculus tell us that the second variation at $\tilde u$ is the following quadratic form:
	\begin{equation*}
		\label{eq: scnd variation quadratic form}
		Q(u) = \int_{-1}^2\langle - H_t u,u\rangle - \int_{-1}^2\int_{-1}^t \sigma (Z_\tau u_\tau, Z_t u_t) d\tau dt.
	\end{equation*}
It is defined on the tangent space $\mathcal{V}$ to the variations fixing the endpoints of our curve. This space can be described explicitly as $\mathcal{V } = \{v : \int_{-1}^2 Z_t v_t dt \in \Pi\}$, where $\Pi = \ker \pi_*$.

The third step is to specialize this representation to our auxiliary system. Notice that an extremal of the original problem lifts naturally to an extremal of the auxiliary one. If $\tilde u$ is the original optimal control, extending it by zero on $[0,1]^c$ gives the optimal control for the auxiliary problem.
Applying the construction just sketched to the extended system, we find that $Z_t$ and $H_t$ are locally constant on $[0,1]^c$. We denote $Z_0$ to be its value on $[-1,0)$ and $Z_1$ the value on $(1,2]$. $H_t$ is zero outside $[0,1]$. Thus, after substitution, we recover precisely the operator given in \eqref{eq: second variation quadratic form}.

\begin{rmrk}
We always assume that our extremal is \emph{strictly normal} and satisfies \emph{Legendre strong  condition}. In terms of the matrices $Z_t$ and $H_t$ this means that for $t \in[0,1]$:
	\begin{equation*}
		X_t := \pi_*Z_t\,  \text{ satisfies }\, \int_0^1 X_t^*X_t>0, \quad  \langle -H_t v,v\rangle \ge\alpha \langle v,v \rangle>0, \, \forall v \in \mathbb{R}^k.
	\end{equation*}
\end{rmrk}

As a last remark, notice that, by the first order optimality conditions, the map $Z_0$ takes values in the space $T_{\lambda_0}Ann(N_0)$ and the map $Z_1$ in the space $(\tilde\Phi^{-1}_1)_*\left (T_{\lambda_1}Ann(N_1)\right )$ ( see PMP in \ref{appendix} and \cite{morsegraph}).

\subsection{The scalar product on the space of variations}

As already mentioned, we will assume through out this paper \emph{Legendre strong condition}. The matrix $-H_t$ is positive definite on $[0,1]$, with uniformly bounded inverse. This allows to use $-H_t$ to define an Hilbert structure on $L^2([0,1],\mathbb{R}^k)$ equivalent to the standard one.
We have still to define the scalar product on a subspace transversal to $\mathcal{V}_0 = \{u_0 = u_1=0\}$. A natural choice would be to introduce two metrics on $T_{\lambda_0} T^*M$ and $T_{\lambda_1}T^*M$ and pull them back to the space of controls using the maps $Z_0$ and $\tilde \Phi_*Z_1: \mathbb{R}^n \to T_{\lambda_i} T^*M$. Let us call any such metrics $g_0$ and $g_1$.
\begin{dfn}
	\label{def:scalar_product_on_H}
	For any $u,v \in \mathcal{H}$ 	 define:
	\begin{equation*}
		\langle u, v \rangle = -\int_0^1 H_t(u_t,v_t) dt +   g_0 (Z_0 u_0,Z_0v_0 )  + g_1( \tilde \Phi_*Z_1 u_1,\tilde \Phi_*Z_1v_1 ) 
	\end{equation*}
\end{dfn}

Since the symplectic form $\sigma$ is  a skew-symmetric bilinear form, there exists a $g_i-$skew-symmetric linear operator $J_i$ such that:
\begin{equation*}
	g_i(J_iX_1,X_2) = \sigma(X_1,X_2), \quad  \forall X_1,X_2 \in T_{\lambda_i} T^*M, \, i=0,1.
\end{equation*}


In terms of the symplectic form the scalar product can be written as:
\begin{equation*}
	\langle u, v\rangle =-\int_0^1 H_t(u_t,v_t) dt + \sigma (J_0^{-1}Z_0u_0,Z_0u_0)+\sigma (J_1^{-1}\tilde \Phi_*Z_1u_1,\tilde \Phi_*Z_1u_1)
\end{equation*}

Now, we use the Hilbert structure just introduced to write the operator $K$ associated to the quadratic form $Q-I$, which is compact.
To simplify notation, we can perform the change of coordinates in $L^2$ sending $v_t \mapsto (-H_t)^{\frac{1}{2}} v_t$ and substitute $Z_t$ with $Z_t (-H_t)^{-\frac{1}{2}}$. In this way the Hilbert structure on the interval becomes the standard one. 

We introduce a further piece of notation, call $pr_0$ (respectively $pr_1$) the orthogonal projection on $\imm(Z_0)$ (respectively $\imm(\tilde \Phi_*Z_1)$) with respect to scalar product $g_0$ (respectively $g_1$). Let $L$ be a partial inverse to $\tilde \Phi_*Z_1$ i.e. a map $L : T_{\lambda_0} T^*M \to \mathbb{R}^n$  defined by the relation $L \tilde \Phi_* Z_1 v_1 = v_1$. Set:
\begin{equation}
	\label{eq: lambda u}
	\Lambda(u) =  L  \,pr_1 \,J_1\tilde \Phi_*\Big(Z_0u_0 + \int_0^1 Z_tu_t dt+Z_1u_1\Big) 
\end{equation}

\begin{lemma}
	\label{lemma: compact part second variation}
	The second variation, as a bilinear form, can be expressed as:
	$Q(u,v) = \langle u+Ku,v\rangle$ where $u, v \in \mathcal{V}$ and $K$ is the operator defined by:
	\begin{equation}
		\label{eq: K}
		Ku  = \begin{pmatrix}
			-u_0 \\ - Z_t^*J \Big(\int_{0}^t Z_\tau u_\tau d\tau +Z_0u_0\Big ) \\ -u_1- \Lambda(u)	
		\end{pmatrix}
	\end{equation}
	where $\Lambda(u)$ is given above, in \cref{eq: lambda u}.
	\begin{proof}
		A quick manipulation of the expression involving the symplectic form in \Cref{def second variation}, yields the following:
		\begin{equation*}
			\begin{aligned}
				\int_{-1}^2\int_{-1}^t \sigma (Z_\tau u_\tau, Z_t v_t) d\tau dt = \int_{-1}^0 \int_{-1}^t \sigma (Z_\tau u_\tau, Z_t v_t) d\tau dt   + \int_0^1 \int_{-1}^t \sigma (Z_\tau u_\tau, Z_t v_t) d\tau dt  \\+\int_1^2 \int_{-1}^t \sigma (Z_\tau u_\tau, Z_t v_t) d\tau dt  \\
				=  \int_0^1 \sigma \left(\int_{0}^t Z_\tau u_\tau d\tau +Z_0u_0,Z_t v_t \right) dt +\sigma \Big(Z_0u_0 + \int_0^1 Z_tu_t dt+Z_1u_1,Z_1v_1\Big) \\
				= \int_0^1 \sigma \left(\int_{0}^t Z_\tau u_\tau d\tau +Z_0u_0,Z_t v_t\right) dt +g_1 \Big(J_1 \tilde \Phi_*\Big(Z_0u_0 + \int_0^1 Z_tu_t dt+Z_1u_1\Big),\tilde \Phi_*Z_1v_1\Big) 
			\end{aligned} 
		\end{equation*}
		Recall that $Z_t$ is constant on $[0,1]^c$. Moreover the images of the maps $Z_0$ and $Z_1$ are isotropic subspaces. We used this fact to simplify the expression in the first line.
		Now, it is clear that in the last term:
		$$g_1 \Big(J_1\tilde \Phi_*\Big(Z_0u_0 + \int_0^1 Z_tu_t dt+Z_1u_1\Big),\tilde \Phi_*Z_1v_1\Big),$$
		only the projection onto the image of $\tilde \Phi_*Z_1$ plays a role. It is straightforward to check that:
		$$ g_1 \Big(J_1\tilde \Phi_*\Big(Z_0u_0 + \int_0^1 Z_tu_t dt+Z_1u_1\Big),\tilde \Phi_*Z_1v_1\Big)  = g_1(\tilde \Phi_*Z_1 \Lambda(u),\tilde \Phi_*Z_1v_1).$$

		Recall that we have normalized $H_t$ to $-1$, thus the first summand can be rewritten as follows:	
		\begin{equation*}
			\int_0^1 \sigma \left(\int_{0}^t Z_\tau u_\tau d\tau +Z_0u_0,Z_t v_t\right) dt = \int_0^1 \langle  Z_t^*J \Big(\int_{0}^t Z_\tau u_\tau d\tau +Z_0u_0\Big ), v_t\rangle dt
		\end{equation*}
		
		Adding and subtracting $g(Z_0u_0,Z_0v_0)$ and $g(Z_1u_1,Z_1v_1)$ to single out the identity, we obtain the formula in the statement.
	\end{proof}
\end{lemma}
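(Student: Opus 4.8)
The plan is to represent $Q$ through the Riesz map of the scalar product of \Cref{def:scalar_product_on_H}: for fixed $u$ the functional $v \mapsto Q(u,v)$ equals $\langle w(u), v\rangle$ for a unique $w(u) \in \mathcal H$, and setting $Ku = w(u)-u$ gives $Q(u,v) = \langle u+Ku, v\rangle$, so that the whole proof is a reorganisation of the bilinear form into the scalar product plus a remainder. I would start from the integral representation of the second variation and polarise it. A convenient preliminary remark is that on the constraint space $\mathcal V = \{v : \int_{-1}^2 Z_t v_t\,dt \in \Pi\}$ the ordered symplectic integral $\int_{-1}^2\int_{-1}^t \sigma(Z_\tau u_\tau, Z_t v_t)\,d\tau\,dt$ already equals the symmetric bilinear form attached to the quadratic part: the two differ by $\tfrac12\sigma\big(\int_{-1}^2 Z_\tau u_\tau\,d\tau, \int_{-1}^2 Z_t v_t\,dt\big)$, which vanishes since both arguments lie in the Lagrangian $\Pi$. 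Hence I may manipulate this ordered integral directly. The diagonal term $-\int_{-1}^2 H_t(u_t,v_t)\,dt = -\int_0^1 H_t(u_t,v_t)\,dt$ then matches the middle block of $\langle u,v\rangle$ and supplies the identity part of the representation.

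Next I would collapse the double integral into a single interval integral plus one endpoint pairing. Splitting $\int_{-1}^2 = \int_{-1}^0 + \int_0^1 + \int_1^2$, I would use that $t \mapsto Z_t$ is constant (with values $Z_0$ on $[-1,0)$ and $Z_1$ on $(1,2]$) and that $\imm(Z_0)$ and $\imm(Z_1)$ are isotropic for $\sigma$ — the latter because by the first order optimality conditions these images lie in the Lagrangian conormals $T_{\lambda_0}Ann(N_0)$ and $(\tilde\Phi^{-1}_1)_* T_{\lambda_1}Ann(N_1)$. Isotropy kills the self-interactions of the two constant blocks, and the surviving cross terms gather into $\int_0^1 \sigma\big(\int_0^t Z_\tau u_\tau\,d\tau + Z_0 u_0,\, Z_t v_t\big)\,dt + \sigma\big(Z_0 u_0 + \int_0^1 Z_t u_t\,dt + Z_1 u_1,\, Z_1 v_1\big)$. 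I expect this to be the main obstacle of the argument: one must invoke constancy and isotropy simultaneously to discard exactly the vanishing blocks while correctly funnelling the $Z_0$- and $Z_1$-contributions into the causal history $\int_0^t Z_\tau u_\tau\,d\tau + Z_0 u_0$ and into a single pairing at $t=1$.

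For the endpoint pairing I would use that $\tilde\Phi$ is a symplectomorphism to write $\sigma(W, Z_1 v_1) = \sigma(\tilde\Phi_* W, \tilde\Phi_* Z_1 v_1) = g_1(J_1 \tilde\Phi_* W, \tilde\Phi_* Z_1 v_1)$, where $W = Z_0 u_0 + \int_0^1 Z_t u_t\,dt + Z_1 u_1$ and $g_i(J_i\cdot,\cdot) = \sigma(\cdot,\cdot)$. Since this is tested only against $\tilde\Phi_* Z_1 v_1 \in \imm(\tilde\Phi_* Z_1)$, only the $g_1$-projection $pr_1 J_1\tilde\Phi_* W$ matters, and the definitions of the partial inverse $L$ and of $\Lambda(u)$ in \eqref{eq: lambda u} identify this with $g_1(\tilde\Phi_* Z_1\Lambda(u), \tilde\Phi_* Z_1 v_1)$. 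For the interval integral I would transpose $Z_t$ into the second slot as a $g$-adjoint and absorb $J$, turning $\sigma(X, Z_t v_t)$ into $\langle Z_t^* J X, v_t\rangle$; here the normalisation $H_t \equiv -I$ carried out just before the lemma is what makes the $L^2$ pairing the standard one.

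Finally I would complete the scalar product. At this point $Q(u,v)$ carries $-\int_0^1 H_t(u_t,v_t)\,dt$ but neither of the endpoint quadratic terms $g_0(Z_0 u_0, Z_0 v_0)$ and $g_1(\tilde\Phi_* Z_1 u_1, \tilde\Phi_* Z_1 v_1)$ that appear in $\langle u, v\rangle$. Adding and subtracting precisely these two terms isolates $\langle u,v\rangle$ and forces the remainder into the form $\langle Ku, v\rangle$. Reading off the three blocks of $\mathcal H = \mathbb{R}^{n_0} \oplus L^2([0,1],\mathbb{R}^k) \oplus \mathbb{R}^{n_1}$ then gives $(Ku)_0 = -u_0$ from the subtracted $g_0$ term, $(Ku)_t = -Z_t^* J\big(\int_0^t Z_\tau u_\tau\,d\tau + Z_0 u_0\big)$ from the interval integral, and $(Ku)_1 = -u_1 - \Lambda(u)$ from the subtracted $g_1$ term together with the endpoint contribution — precisely the operator \eqref{eq: K}.
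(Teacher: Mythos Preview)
Your proposal is correct and follows essentially the same route as the paper: split the outer integral over $[-1,2]$ into the three blocks, kill the self-interactions of the constant pieces by isotropy of $\imm(Z_0)$ and $\imm(Z_1)$, push the endpoint pairing through $\tilde\Phi_*$ and $J_1$ to recognise $\Lambda(u)$, rewrite the interior term via $Z_t^*J$ using the normalised scalar product, and finally add and subtract the two boundary $g_i$-terms to peel off $\langle u,v\rangle$. The Riesz-map framing and the remark on symmetry via the Lagrangian property of $\Pi$ are pleasant additions but do not alter the argument.
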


\section{Hill-type formulas}
\label{section: applications}
Before going to the proof of \Cref{thm: main theorem} we present here some applications of the main result. We deduce Hill's formula for periodic trajectory and specify it to the eigenvalue problem for Schr\"odinger operators. In the second sub-section we present a variation of the classical Hill formula for systems with drift. We will mainly deal with periodic and quasi-periodic boundary conditions. Namely, we consider the case $N = \Gamma(f)$ for a diffeomorphism of the state space $f: M \to M$.

The proofs of this section rely quite heavily on the machinery introduce in \Cref{section: proof general BC}, in particular in \Cref{lemma: quantities at s=0,lemma: second variation general BC and trace}.  The statements, on the other hand, do not and could shed some light on \Cref{thm: main theorem}.
Despite the appearance, proofs are rather simple. They reduce to a (long) computation of the normalizing factors appearing in the statement of \Cref{thm: main theorem} and can be skipped at first reading.
\subsection{Driftless systems and classical Hill's formula}
In this section we consider driftless systems with periodic boundary conditions on $\mathbb{R}^{n}$ and specify the formulas of \Cref{thm: determinant general BC} for this class of problems.

First of all let us explain what we mean by \emph{driftless} systems. Let $t \mapsto R_t$ a continuous family of symmetric matrices of size $n \times n$ and let us denote by $u$ a function in $L^\infty([0,1],\mathbb{R}^n)$.

Consider the following family of vector fields $f_u(q)$, their associated trajectories $q_u(t)$ and the action functional $\mathcal{A}(u)$:
\begin{equation}
	\label{eq: functional hill formula}
	\begin{split}
		f_u(q) = u(t), \quad \begin{cases}
			\dot q_u = f_u(q) = u(t),\\  
			q(0) = q_0 \in \mathbb{R}^n
		\end{cases}\\  \mathcal{A}(u) = \frac{1}{2} \int_0^1 |u|^2-\langle R_t q_u(t),q_u(t) \rangle dt.
	\end{split}
\end{equation}

We impose periodic boundary conditions, i.e. we take $N = \Delta = \{(q,q)\in \mathbb{R}^{2n}:  q \in \mathbb{R}^n\}$. The Hamiltonian coming from the Maximum Principle takes the form:
\begin{equation}
	\label{eq: hamiltonian hill}
	H (p,q) = \max_{u \in \mathbb{R}^k} \langle p,u \rangle -\frac{1}{2}(\vert u\vert^2 - \langle R_t q,q\rangle) = \frac{1}{2}(\langle p,p \rangle+\langle R_t q,q \rangle ).
\end{equation}

Let us denote the flow generated by $H$ by $\Psi_t$. Fix a normal extremal $\lambda_t$ for periodic boundary conditions and its optimal control $\tilde u (t)$. 
\begin{thm}[Hill's formula]
	\label{thm hill formula}
	Let $I+K$ be the second variation at $\tilde{u}$ and $\Psi$ the fundamental solution of $\dot{\Psi} = \vec{H} \Psi$. The following equality holds:
	\begin{equation*}
		\det(I+K) = (-1)^n (2e)^{-n}\det(G^{-1})\det(1-\Psi)
	\end{equation*}
	where $G$ is a scalar product on the tangent space to the initial point.
\end{thm}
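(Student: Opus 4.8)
The plan is to specialize the corollary to \Cref{thm: main theorem} to the present problem and to compute the four ingredients appearing there: the normalizing constants $\mathfrak{p}_Q(0)$, $\mathfrak{p}_Q'(0)$, $\tr(Q-I)$, and the topological factor $\det(\pi_N\vert_{\Gamma(\Psi)})$. First I would read off the data of the driftless system. Since $f_u(q)=u$ and the Lagrangian is $\tfrac12(|u|^2-\langle R_tq,q\rangle)$, the maximized Hamiltonian is $H$ as in \eqref{eq: hamiltonian hill}; its Hamiltonian vector field is linear, so the linearisation of the extremal flow coincides with $\Psi_t$ itself. Computing the matrices of \Cref{section: space of variation} one finds $H_t=-I$ (hence $-H_t=I$ and the $L^2$-normalization $v_t\mapsto(-H_t)^{1/2}v_t$ is trivial) and $\pi_*Z_t=I$. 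This makes every object entering the theorem explicit in terms of the flow $\Psi_t$ and the chosen metrics $g_0,g_1$ on $T_{\lambda_0}T^*M$ and $T_{\lambda_1}T^*M$.

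Next I would compute the topological factor. For periodic conditions $N=\Delta$ one has $Ann(\Delta)=\{(\lambda,\lambda)\}$, so along the periodic extremal $\lambda_0=\lambda_1$ and the tangent space $T_{(\lambda_0,\lambda_1)}Ann(\Delta)$ is the diagonal of $T_{\lambda_0}T^*M\oplus T_{\lambda_1}T^*M$. Choosing the complement $V_N$ to be the second factor, the projection $\pi_N$ reads $(v_1,v_2)\mapsto v_2-v_1$, and its restriction to $\Gamma(\Psi)=\{(v,\Psi v)\}$ is the map $v\mapsto(\Psi-I)v$. Since the phase space has dimension $2n$, its determinant is $\det(\Psi-I)=(-1)^{2n}\det(1-\Psi)=\det(1-\Psi)$, up to the metric-induced volume factor relating the inner products on $\Gamma(\Psi)$ and on $V_N$, which I must track carefully. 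This produces the factor $\det(1-\Psi)$ in the statement.

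The bulk of the work, and the main obstacle, is the explicit evaluation of the three normalizing constants through the iterated-integral formulas of \Cref{lemma: quantities at s=0,lemma: second variation general BC and trace} together with the description of $K$ in \Cref{lemma: compact part second variation}. Here $\mathfrak{p}_Q(s)=\det(\pi_N\vert_{\Gamma(A_1^s\tilde\Phi_*\Phi_1^sA_0^s)})$ must be expanded around $s=0$: at $s=0$ the dilation $\delta^0$ annihilates the fibre, so the Jacobi flow $\Phi_1^0$ degenerates to an explicit map, while the correction maps $A_0^0,A_1^0$ of \eqref{eq:def_Ai_intro} contribute the $J_i^{-1}$ terms built from the metrics. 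I expect $\mathfrak{p}_Q(0)$ to evaluate to $(-1)^n2^n\det(G)$, with the power $2^n$ coming from the $(1-s)\vert_{s=0}=1$ corrections in $A_0^s,A_1^s$ and the factor $\det(G)$ from the metric normalization of the scalar product of \Cref{def:scalar_product_on_H}; inverting this yields $(-1)^n2^{-n}\det(G^{-1})$.

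Finally I would show that the remaining exponential collapses to $e^{-n}$, i.e. that $\tr(Q-I)-\mathfrak{p}_Q'(0)=-n$. Both $\tr(Q-I)=\tr(K)$ and $\mathfrak{p}_Q'(0)$ are given by the same type of iterated integrals of $Z_t,H_t$ along the extremal, and the delicate point is that these integral contributions cancel in the difference, leaving only the constant $-n$ coming from the $-u_0$ and $-u_1$ blocks of $K$. Assembling the three pieces gives $\mathfrak{p}_Q(0)^{-1}e^{\tr(Q-I)-\mathfrak{p}_Q'(0)}\det(\pi_N\vert_{\Gamma(\Psi)})=(-1)^n2^{-n}\det(G^{-1})\,e^{-n}\,\det(1-\Psi)=(-1)^n(2e)^{-n}\det(G^{-1})\det(1-\Psi)$, which is the asserted formula. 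The principal difficulty throughout is the careful bookkeeping of the metric-dependent volume factors, since $\det(\pi_N\vert_{\Gamma(\Psi)})$, $\mathfrak{p}_Q(0)$ and $\det(G^{-1})$ are each metric dependent and only their stated combination is the intrinsic Hill determinant.
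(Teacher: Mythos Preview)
Your proposal is correct and follows essentially the same route as the paper: specialize the main determinant formula to the driftless periodic problem, choose the projection $(\eta_0,\eta_1)\mapsto\eta_1-\eta_0$ so that $\det(Q^s)\vert_{s=1}=\det(\Psi-I)$, and compute the normalizing constants via \Cref{lemma: quantities at s=0,lemma: second variation general BC and trace}. Your predicted values $\mathfrak{p}_Q(0)=(-1)^n2^n\det(G)$ and $\tr(K)-\mathfrak{p}_Q'(0)/\mathfrak{p}_Q(0)=-n$ are exactly what the paper obtains after setting $G_0=G_1=G$ (note $\Gamma=\int_0^1 X_tX_t^*\,dt=I$ here, which is why $\det(\Gamma)$ contributes nothing).

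One small point of confusion to clear up: you worry about a ``metric-induced volume factor relating the inner products on $\Gamma(\Psi)$ and on $V_N$'' when evaluating $\det(\pi_N\vert_{\Gamma(\Psi)})$. There is no such extra factor to track. In the paper's setup $\det(Q^s)$ is defined as the determinant of a concrete $2n\times 2n$ matrix once a basis of $T_{(\lambda_0,\lambda_1)}A(N)$ and a complement are fixed; any basis-dependent scalar is the same at every $s$ and therefore cancels in the ratio $\mathfrak{p}_Q(s)/\mathfrak{p}_Q(0)$. The metric $G$ enters only through the maps $A_i^0$ inside $\mathfrak{p}_Q(0)$, which is precisely where your $\det(G)$ appears. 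So at $s=1$ you simply get $\det(\Psi-I)=\det(1-\Psi)$ on the nose, and the bookkeeping you flag as the ``principal difficulty'' is in fact automatic.
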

\begin{rmrk}
	If we are working on the interval $[0,T]$ instead of $[0,1]$ everything remains essentially unchanged. The only difference is that extra factor $T^{-n}$ appears in the right hand side. In the notation of the proof below this corresponds to $\det(\Gamma)^{-1}$.
\end{rmrk}
We can apply the previous result to study boundary value problems for Sturm-Liouville operators. Let us illustrate the case of Schr\"odinger  equation with periodic boundary conditions. Fix, without loss of generality, the normal extremal  $(p(t),q(t)) = (0,0)$ and with relative optimal control $\tilde{u} = 0$. Consider the cost $\tilde R_t = R_t+\lambda$, for $\lambda \in \mathbb{R}$. Consider the second variation of the functional
\begin{equation*}
	\mathcal{A}_\lambda (u) = \frac{1}{2}\int_0^1 \vert u_t\vert ^2+\langle (R_t+\lambda) q_u(t),q_u(t)\rangle dt
\end{equation*}
at the point $\tilde u =0$. It is given by the operator $1+K_\lambda$ where:
\begin{equation*}
	\langle K_\lambda (u),u \rangle =\lambda\left ( \int_0^1\int_0^t\langle  (\tau-t)u(\tau), u(t)\rangle d\tau dt +\langle u_0,u_0\rangle \right )+\langle K_0(u),u\rangle
\end{equation*}We have the following corollary:
\begin{cor}
	Let $\lambda \in \mathbb{R}$, $\Psi_\lambda$ the fundamental matrix of the lift to $\mathbb{R}^{2n}$ of the following $ODE$ on $\mathbb{R}^{2n}$:
	\begin{equation*}
		\ddot{q}(t) = (R_t+\lambda)q(t)
	\end{equation*}
	The determinant of the operators $1+K_\lambda$ can be expressed as:
	\begin{equation*}
		\det(1+K_\lambda) = (-1)^n (2e)^{-n} \det(G^{-1})\det(1-\Psi_\lambda),
	\end{equation*}
	where $G$ as in the previous statement.
\end{cor}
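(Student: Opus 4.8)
The plan is to read this corollary as nothing more than the preceding Hill's formula (\Cref{thm hill formula}) applied to the one-parameter family of driftless problems whose action is $\mathcal{A}_\lambda$. The text preceding the statement already records that $1+K_\lambda$ is the second variation of $\mathcal{A}_\lambda$ at the extremal $\tilde u = 0$; the remaining task is therefore to identify the Hamiltonian flow furnished by \Cref{thm hill formula} with the fundamental matrix $\Psi_\lambda$ of the lift of $\ddot q = (R_t+\lambda)q$, and to observe that the normalizing prefactor is insensitive to $\lambda$.

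First I would check that the hypotheses of \Cref{thm hill formula} hold for every $\lambda$. The constant curve $(p,q)=(0,0)$ with control $\tilde u = 0$ is trivially a periodic extremal for $N = \Delta$, and since the velocity cost is $\tfrac12|u|^2$ we have $-H_t = I$, so Legendre strong condition is satisfied uniformly and the extremal is strictly normal. Thus \Cref{thm hill formula} applies verbatim and produces $\det(1+K_\lambda) = (-1)^n(2e)^{-n}\det(G^{-1})\det(1-\Psi_\lambda)$, with $\Psi_\lambda$ the flow of the associated Hamiltonian.

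The only genuine point requiring care is the sign of the potential, and this is where I expect the sole (minor) obstacle to lie. The driftless template of \Cref{thm hill formula} uses the action $\tfrac12\int_0^1 |u|^2 - \langle R_t q_u, q_u\rangle\,dt$, whereas $\mathcal{A}_\lambda$ carries the potential with the opposite sign; applying the theorem therefore amounts to the substitution $R_t \rightsquigarrow -(R_t+\lambda)$ in \eqref{eq: hamiltonian hill}. This yields the Hamiltonian $H_\lambda(p,q) = \tfrac12\big(\langle p,p\rangle - \langle (R_t+\lambda)q,q\rangle\big)$, whose equations are $\dot q = p$ and $\dot p = (R_t+\lambda)q$, i.e. $\ddot q = (R_t+\lambda)q$. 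Hence the flow $\Psi_\lambda$ of $\dot\Psi_\lambda = \vec H_\lambda \Psi_\lambda$ is exactly the fundamental matrix of the lift to $\mathbb{R}^{2n}$ of the second-order ODE in the statement.

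With this identification the corollary is immediate: $G$ is the same ambient scalar product appearing in \Cref{thm hill formula}, and the factor $(-1)^n(2e)^{-n}\det(G^{-1})$ depends only on $n$ and $G$, not on $\lambda$, so it transfers unchanged. I expect no conceptual difficulty beyond propagating the sign flip consistently through the Legendre transform and the Hamiltonian vector field; as a consistency check one may also confirm the explicit $\lambda$-dependence displayed for $\langle K_\lambda(u),u\rangle$ directly from the integral representation in \Cref{def second variation}, though this is not needed once the reduction to \Cref{thm hill formula} is in place.
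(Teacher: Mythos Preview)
Your proposal is correct and matches the paper's approach: the paper gives no separate proof for the corollary, treating it as an immediate application of \Cref{thm hill formula} with the potential replaced by $-(R_t+\lambda)$, exactly as you do. Your care in tracking the sign through the Legendre transform so that the Hamiltonian flow becomes the lift of $\ddot q=(R_t+\lambda)q$ is the only point requiring verification, and you handle it correctly.
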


\begin{proof}[Proof of \Cref{thm hill formula}]
We are going now to describe explicitly all the objects involved in \Cref{thm: main theorem}. Let us start with the flow $\tilde \Phi$ we use to re-parametrize the space  and its differential. It is given by the Hamiltonian:
\begin{equation*}
	\begin{split}
		&h_{\tilde{u}(t)}(p,q) = \langle p,\tilde{u}(t)\rangle+ \frac{1}{2} \langle R_t q, q \rangle \quad  \Rightarrow  \quad \begin{cases}
			\dot{p} = -R_t q \\
			\dot{q} = \tilde u (t).
		\end{cases}\\
		&\tilde \Phi_t(p,q) = \begin{pmatrix}
			1 &-\int_0^t R_\tau d\tau\\
			0 &1
		\end{pmatrix}\begin{pmatrix}
			p\\ q
		\end{pmatrix} +\begin{pmatrix}-
			\int_0^t \int_0^\tau R_\tau  \tilde u(r) dr d\tau\\ \int_0^t  \tilde u (\tau) d\tau 
		\end{pmatrix}\\  &(\tilde \Phi_t)_*  = \begin{pmatrix}
			1 &-\int_0^t R_\tau d\tau\\
			0 &1
		\end{pmatrix}.
	\end{split} 
\end{equation*}
The matrix $Z_t$ is the following:
\begin{equation*}
	\quad Z_t = (\tilde{\Phi}_t^{-1})_* \partial_u \vec h^t_u = \begin{pmatrix}
		\int_0^t R_\tau d\tau \\ 1
	\end{pmatrix}.
\end{equation*}

To simplify notation, let us call $\hat R = -\int_0^1 R_\tau d \tau$.
The annihilator of the diagonal is simply the graph of the identity. Hence the following map, defined on $(T_{\lambda_0} T^*M)^2$, has the latter as kernel:
\begin{equation*}
	(\eta_0,\eta_1) \mapsto \eta_1-\eta_0.
\end{equation*}

We will now define $Q^s$ as in \cref{eq: def map Q} (actually up to a scalar, but this is irrelevant). For $\eta \in T_{\lambda_0} T^*M$ set:
\begin{equation*}
	Q^s(\eta) = \begin{pmatrix}
		-1 &1
	\end{pmatrix} \begin{pmatrix}
	\eta \\ A_1^s\tilde \Phi_* \Phi_1^s A_0^s\eta
\end{pmatrix} =( A_1^s\tilde \Phi_* \Phi_1^sA_0^s -1)\eta \in T_{\lambda_0} T^*M.
\end{equation*}
It is clear that the kernel of $Q^s$ is precisely the intersection of the graph with the diagonal subspace. Since we are working on $\mathbb{R}^{2n}$ we can define the determinant of this map as:
\begin{equation*}
	\det(Q^s) =  \det(A_1^s\tilde \Phi_* \Phi_1^sA_0^s -1)
\end{equation*}
As already mentioned in \Cref{section: statements} this function is a multiple of the \emph{characteristic polynomial} of $K$. It satisfies (see \cref{section: proof separate BC,section: proof general BC}):
\begin{equation*}
	\det(Q^s) = a e^{b s}\det(1+sK), \quad a \in \mathbb{C}^*, b \in \mathbb{C} 
\end{equation*}
Let us compute the normalization factors. To do so, we have to evaluate $\det(Q^s)$ and its derivative at $s=0$. This will give us the relations:
\begin{equation*}
	\det(Q^0) = a, \quad a(b+\tr(K)) = \partial_s \det(Q^s)\vert_{s=0}.
\end{equation*} 
We have to work a bit to write down precisely all the quantities appearing in the formulas above. 
It is straightforward to compute the matrix representations of the maps $A_0^s$ and $A_1^s$. In this setting the projections onto $\Pi_0$ and $\Pi_0^\perp$ are given by:
\begin{equation*}
	pr_{\Pi_0} = \begin{pmatrix}
		1 &0 \\ 0 &0
	\end{pmatrix}, \quad  pr_{\Pi_0^\perp} = \begin{pmatrix}
	0 &0 \\ 0 &1
\end{pmatrix}
\end{equation*}

Recall that the definition of $A_0^s$ and $A_1^s$ given in \eqref{eq:def_Ai_intro} depends on the choice of two scalar products $g_0$ and $g_1$. Denote by $G_0$ and $G_1$ their restriction to $\Pi_0^\perp$  and $\Pi_1^\perp$ respectively. We have that:
\begin{equation*}
	\begin{split}
		A_0^s = \begin{pmatrix}
			1 & (1-s)G_0\\ 0 &1
		\end{pmatrix}, \quad A_1^s = \begin{pmatrix}
		1 &(1-s)(G_1-\hat R) \\ 0 &1
	\end{pmatrix}\\
 A_1^s\tilde \Phi_* = \begin{pmatrix}
 	1 & s\hat{R} +(1-s)G_1\\ 0 &1
 \end{pmatrix}
	\end{split}
\end{equation*}

The value of $\Phi_t^s$ and its derivative at $s=0$ is given in \Cref{lemma: quantities at s=0}. Here, for the submatrices of $\Phi^0_t$ and $\partial_s \Phi_t^s\vert_{s=0}$, we use the notation defined in the \Cref{lemma: quantities at s=0}. In this case, since $Y_t = \int_0^t R_\tau d\tau$ and $X_t =1$, we obtain:
\begin{equation*}
	\begin{split}
	\Phi_t^0 & = \begin{pmatrix}
	1 &0 \\ \Gamma &1
\end{pmatrix}= \begin{pmatrix}
1 &0 \\
t &1
\end{pmatrix}\\ \partial_s \Phi_t^s\vert_{s=0} &=\begin{pmatrix}
	\Theta &0\\ \Omega &-\Theta^*
\end{pmatrix} = \begin{pmatrix}
	\int_0^t\int_0^\tau R_\omega d\omega & 0 \\ \int_0^t \int_0^\tau \int_r^\tau R_\omega d\omega dr d\tau & -\int_0^t \int_0^\tau R_\omega d\omega
\end{pmatrix}
\end{split}
\end{equation*}
Let us compute the value of $\det(Q^s)$ in zero. Putting all together we have:
\begin{equation*}
		\det(Q^s)|_{s=0} = \det\left(\begin{pmatrix}
			1 &G_1 \\
			0&1 
		\end{pmatrix} \begin{pmatrix}
		1 &0 \\
		\Gamma &1 
	\end{pmatrix} \begin{pmatrix}
	1 &G_0 \\
	0&1 
\end{pmatrix} -\begin{pmatrix}
1 &0 \\
0&1 
\end{pmatrix}  \right)
\end{equation*}
After a little bit of computation we find that $Q_s\vert_{s=0}$ satisfies:
\begin{equation*}
	\begin{split}
	Q^s\vert_{s=0} = \begin{pmatrix}
		G_1 \Gamma &G_0+G_1+G_1\Gamma G_0 \\ \Gamma &\Gamma G_0 
	\end{pmatrix}, \quad \det(Q^s\vert_{s=0} ) = (-1)^n\det(\Gamma)\det(G_1+G_0),\\ 	(Q^s\vert_{s=0} )^{-1} = \begin{pmatrix}
	-G_0(G_1+G_0)^{-1} &\Gamma^{-1}+ G_0(G_1+G_0)^{-1}G_1 \\
	(G_1+G_0)^{-1} &-(G_1+G_0)^{-1}G_1
\end{pmatrix}
\end{split}
\end{equation*}

We can compute the derivative $\det(Q^s)$ at $s=0$, we find that:
\begin{equation*}
	\begin{split}
		\partial_s Q^s\vert_{s=0} &= (\partial_s A_1^s)\tilde \Phi_* \Phi_1^sA_0^s + A_1^s\tilde \Phi_* (\partial_s\Phi_1^s)A_0^s + A_1^s\tilde \Phi_* \Phi_1^s(\partial_sA_0^s)\vert_{s=0} \\
		&=\begin{pmatrix}
		( G_1-\hat R)\Gamma & G_1- \hat R\\ 0 & 0
	\end{pmatrix}\begin{pmatrix}
	1 & G_0\\ 0 &1
\end{pmatrix}+\\&\quad+\begin{pmatrix}
1 & G_1\\ 0 &1
\end{pmatrix}\left(\begin{pmatrix}
\Theta & 0\\ \Omega &-\Theta^*
\end{pmatrix}\begin{pmatrix}
1 & G_0\\ 0 &1
\end{pmatrix}+\begin{pmatrix}
0 & G_0 \\
0 &\Gamma G_0 
\end{pmatrix}\right) 
	\end{split}
\end{equation*}
We use now Jacobi formula for the derivative of the determinant of a family of invertible matrices. It reads:
\begin{equation*}
	\partial_s \det (M_s) = \det(M_s) \tr (\partial_s M_s M_s^{-1}).
\end{equation*}
Without going into the detail of the actual computation, which at this point is just matrix multiplication, we have that:
\begin{equation*}
	\begin{split}
		\partial_s \det(Q^s)\vert_{s=0} &=  \det(Q^s)\vert_{s=0} \,  \tr(\partial_s(Q^s)(Q^s)^{-1})\vert_{s=0} \\&= \det(Q^s)\vert_{s=0} \,  \tr((G_1+G_0)^{-1}(G_0-G_1+\hat R)+\Gamma^{-1}\Omega)
	\end{split}
\end{equation*}
The last quantity we have to compute is $\tr(K)$. To do so we use \Cref{lemma: second variation general BC and trace}. Mind that in the statement of the Lemma one works with twice the variables, taking as state space $\mathbb{R}^{n} \times \mathbb{R}^{n}$ and using the symplectic form $(-\sigma) \oplus\sigma$ on $\mathbb{R}^{2n}\times \mathbb{R}^{2n}$. The quantities with $\tilde \,$ on top always refer to the system in $\mathbb{R}^{4n}$, where we have a trivial dynamic on the first factor and the boundary condition we impose are in this case $\Delta \times \Delta$ (see the beginning of \Cref{section: proof general BC} for more details). The formula given in the lemma reads:
\begin{equation}
	\label{eq:trace_in_applications}
	\begin{split}
		\tr(K) =& -\dim(N) +\tr[\pi_*^1\tilde \Phi_*^{-1}\,pr_1 \tilde J_1\tilde 	\Phi_* (\tilde Z_0) ]\\ &+\tr\left [\Gamma^{-1} \left( \Omega  + (\pi^2_*-\pi_*^1)\tilde \Phi_*^{-1}\,pr_1 \tilde J_1\tilde 	\Phi_* (\int_0^1 \tilde Z_tZ_t^*J\vert_{\Pi}  dt)\right)\right]
	\end{split}
\end{equation}	
Let us explain all the objects appearing in the formula. $\tilde \pi_*^i$ denotes the differential of the natural projection on the $i-$th factor. The matrix $\tilde \Phi_*$ is given here by:
\begin{equation*}
	\tilde \Phi_* = \begin{pmatrix}
		\begin{pmatrix}
			1 & 0\\ 0 &1
		\end{pmatrix} & 0\\
		0 &\begin{pmatrix}
			1 &R \\ 0 &1
		\end{pmatrix}
	\end{pmatrix}
\end{equation*} Moreover the matrices $\tilde Z_0$, $\tilde Z_t$ and $\tilde Z_1$ are:
\begin{equation*}
	\tilde Z_0 =  \begin{pmatrix}
		0 \\ 1 \\ 0 \\ 1
	\end{pmatrix},\quad \tilde Z_t =\begin{pmatrix}
	0 \\ 0 \\ \int_0^t R_\tau d\tau  \\ 1
\end{pmatrix}, \quad  \tilde Z_1  = \begin{pmatrix}
	0 \\ 1 \\ -R \\ 1
\end{pmatrix}, \quad  \tilde \Phi_*\tilde Z_1= \begin{pmatrix}
0 \\ 1 \\ 0 \\ 1
\end{pmatrix}
\end{equation*} 
The map $pr_1$ denotes the orthogonal projection onto the image of $\tilde Z_1$. We are using the scalar product $g_0 \oplus g_1$ on $T_{\lambda_0} T^*M \times T_{\lambda_1} T^*M$ to define it. One can check that the following map is the coordinate representation of $pr_1$:
\begin{align*}
	pr_1 = (G_0+G_1)^{-1}\begin{pmatrix}
		0 & 0 &0 &0\\
		0 &G_0 &0 &G_1\\
	0 & 0 &0 &0\\
0 &G_0 &0 &G_1\\
	\end{pmatrix} \\
 pr_1 \tilde J_1  = (G_0+G_1)^{-1} \begin{pmatrix}
	0 & 0 & 0 & 0\\
	-1 & 0& 1 & 0\\
		0 & 0 & 0 & 0\\
	-1 & 0& 1 & 0\\
\end{pmatrix}
\end{align*}

Now everything reduces to some tedious matrix multiplications. The second term in the right hand side of \eqref{eq:trace_in_applications} simplifies as follows:
\begin{equation*}
 \tr[\pi_*^1\tilde \Phi_*^{-1}\,pr_1 \tilde J_1\tilde 	\Phi_* (\tilde Z_0) ] = \tr(\hat R (G_0+G_1)^{-1}).
\end{equation*}

For the third term notice that $(\pi_*^2-\pi_*^1)(\tilde \Phi_*)^{-1} pr_1 $ is identically zero since $\tilde \Phi_*$ does not change the projection on the horizontal part and we are working with periodic boundary conditions.  It follows we are left with $\tr(\Gamma^{-1} \Omega)$. Summing up we have that:
\begin{equation*}
	\tr(K) = \tr(\Omega \Gamma^{-1}+ \hat{R}(G_0+G_1)^{-1}){-\dim(M).}
\end{equation*}

It is natural to think of $g_0$ and $g_1$ as restrictions of globally defined Riemannian metrics. Doing so, since we are working with periodic boundary conditions, amounts to choose $G_0= G_1$. Hence the result.
\end{proof}
\subsection{System with drift and Hill-type formulas}
In this section we give a version of Hill's formula for linear systems with drift. They are again linear system with quadratic cost of the following form:  
\begin{equation}
\label{eq: functional hill formula drift}
\begin{split}
	f_u(q) &= A_t q +B_t u(t), \quad \begin{cases}
		\dot q_u = f_u(q) =A_t q_u+B_t u(t),\\  
		q(0) = q_0 \in \mathbb{R}^n
	\end{cases}\\  \mathcal{A}(u) &= \frac{1}{2} \int_0^1 |u|^2+\langle R_t q_u(t),q_u(t) \rangle dt.
\end{split}
\end{equation}
Here $A_t$ is $n\times n$ matrix and $B_t$ a $n\times k$ one, both with possibly non-constant (but continuous) coefficients. The maximized Hamiltonian takes the form:
\begin{equation*}
		H(p,q) = \langle p,A_tq\rangle +\frac{1}{2}\left ( \langle B_tB^*_tp,p\rangle+\langle R_t q,q\rangle\right).
\end{equation*}

Denote by $\hat\Phi_t$ the fundamental solution of $\dot{q } =A_t q$ at time $t$. We can lift this map to a symplectomorphism of the cotangent bundle which we denote by $\underline{\Phi}_t$. 
As boundary conditions we take the following affine subspace of $\mathbb{R}^{4n}$: $$N = \Gamma(\hat \Phi+ \hat\Phi\int_0^1 \hat{\Phi}_r^{-1}B_r\tilde{u}(r)dr).$$ Notice that, since only the tangent space matters in our formulas, the translation is irrelevant and it would be the same as if we considered $\Gamma(\hat \Phi)$. An obvious choice of extremal is the point $\lambda_t = (0,0)$, with control $\tilde{u} =0$.

\begin{thm}[Hill's formula with drift]
	Suppose that a critical point of the functional given in \eqref{eq: functional hill formula drift} is fixed and let $\tilde u $ be its optimal control. Let $G_0$ and $G_1$ be our choices of scalar product, $\Gamma := \int_0^1 \hat\Phi_\tau B_\tau B_\tau^*\hat{\Phi}^*_\tau  d\tau$ and $\Psi_t$ the fundamental solution of $\dot\Psi = \vec{H} \Psi$. Moreover, define the following matrix:
	\begin{equation*}
		G = G_0+\hat\Phi_t^* G_1 \hat\Phi_t.
	\end{equation*}
	Let $I+K$ be the second variation at $\tilde{u}$. The following equality holds:
	\begin{equation*}
		\det(I+K) = \frac{(-1)^n e^{-2 \,\tr(G^{-1}G_0)}}{\det(G)\det(\Gamma)} \det(\Psi_t-\underline{\Phi}_t).
	\end{equation*}
\begin{proof}
The proof is completely analogous to the one of \Cref{thm hill formula}. First of all the Hamiltonian we use to re-parametrize is given as follows:
\begin{equation*}	
	h^t_{\tilde{u}(t)} = \langle p,B_t \tilde{u}(t)+A_tq\rangle +\frac{1}{2}\langle R_t q,q\rangle.
\end{equation*}
Hence the flow and its differential are given by:
\begin{equation*}
	\begin{split}
	\tilde \Phi_t(p,q) =& \begin{pmatrix}
		(\hat\Phi_t^*)^{-1} & -(\hat \Phi_t^*)^{-1}\int_0^t \hat \Phi_\tau^*R_\tau\hat \Phi_\tau d\tau \\ 0 &\hat \Phi_t
	\end{pmatrix}\begin{pmatrix}
	p\\ q
\end{pmatrix} \\&+\begin{pmatrix}
	-(\hat{\Phi}_t^*)^{-1}\int_0^t\hat\Phi_\tau^* R_\tau \hat\Phi_\tau\int_0^\tau \hat{\Phi}_r^{-1}B_r\tilde{u}(r)dr d\tau\\ \hat\Phi_t\int_0^t \hat{\Phi}_r^{-1}B_r\tilde{u}(r)dr 
\end{pmatrix}\\
	(\tilde \Phi_t)_* =& \begin{pmatrix}
	(\hat\Phi_t^*)^{-1}_* & -(\hat \Phi_t^*)^{-1}\int_0^t \hat \Phi_\tau^*R_\tau\hat \Phi_\tau d\tau \\ 0 &\hat \Phi_t
\end{pmatrix}
\end{split}
\end{equation*}
 We have to define the map $Q^s$. Similarly as the previous case, we can define a map having as kernel the annihilator to the boundary conditions as follows:
 \begin{equation*}
    (\eta_0,\eta_1) \mapsto  \begin{pmatrix}
    	\hat\Phi^*_t  &0 \\ 0 & (\hat \Phi_t)^{-1} 
    \end{pmatrix} \eta_1-\eta_0 =: T_1 \eta_1 -\eta_0 
 \end{equation*}
Hence:
\begin{equation*}
	\det(Q_s) = \det(T_1 A_1^s\tilde{\Phi}_* \Phi_1^sA_0^s-1).
\end{equation*}
Set as before $\hat{R} = - (\hat \Phi^*)^{-1}\int_0^1 \hat \Phi_\tau^*R_\tau\hat \Phi_\tau d\tau$, the upper right minor of $\tilde \Phi_*$. A quick computation shows that:
\begin{equation*}
	T_1A_1^s\tilde \Phi_* = \begin{pmatrix}
		1  &(1-s)\hat \Phi_t^*G_1\hat \Phi_t+ s \hat \Phi_t^*\hat R \\0 &1
	\end{pmatrix} := \hat A^s_1 \tilde \Phi_*
\end{equation*}

Hence, up to renaming $G_1$ and $\Gamma$ in the proof of \Cref{thm hill formula}, we have:
\begin{equation*}
	\begin{split}
	\det(Q^s)\vert_{s=0} &= (-1)^n \det(\Gamma)\det(\hat \Phi_t^*G_1\hat \Phi_t+G_0) =  (-1)^n \det(\Gamma)\det(G), \\
	\frac{\partial_s \det(Q^s)}{\det(Q^s)}\vert_{s=0}  &= \tr(((\hat \Phi_t^*G_1\hat \Phi_t+G_0)^{-1}(G_0-\hat \Phi_t^*G_1\hat \Phi_t+\hat\Phi_t^*\hat R)+\Gamma^{-1}\Omega)
	\end{split}
\end{equation*}

Now we have to apply \Cref{lemma: second variation general BC and trace} to compute the trace of the compact part of the second variation. Here $pr_1$ and $\tilde Z_1$ are different since we have changed boundary conditions. However we have the same kind of simplification as in the previous case. Let us write explicitly the new objects:
\begin{equation*}
	Z_1 = \begin{pmatrix}
		0 \\ 1 \\ 0 \\\hat \Phi_t
	\end{pmatrix}, \quad pr_1 = \begin{pmatrix}
		0&0&0 &0 \\ 0 &L G_0 &0 &L \hat\Phi_t^* G_1  \\ 0&0&0&0 \\0 &\hat\Phi_tL G_0 &0 &\hat\Phi_t L \hat\Phi_t^* G_1 
	\end{pmatrix}, \quad L =(G_0+\hat\Phi_t^*G_1\hat\Phi_t)^{-1}
\end{equation*} In the end, the trace reads:
\begin{equation*}
	\tr(K) = \tr(\Gamma^{-1}\Omega+\hat \Phi_t^*\hat R(\hat \Phi_t^*G_1\hat \Phi_t+G_0)^{-1}) -\dim(M).
\end{equation*}

Contrary to the previous section the two term do not simplify. We are left with the following equation for $b$:
\begin{align*}
	b &= \dim(M) + \tr((\hat \Phi_t^*G_1\hat \Phi_t+G_0)^{-1}(G_0-\hat \Phi_t^*G_1\hat \Phi_t))\\
	 &= n+ 2\, \tr(G^{-1} G_0)-n = 2\, \tr(G^{-1}G_0).
\end{align*}

Hence, the statement follows evaluating $\det(Q^s)$ at $s =1$:
	\begin{equation*}
		\begin{split}
		\det(Q^s) &= \det(T_1 \tilde \Phi_* \Phi_1^1-1) = \det (\underline{\Phi}_t^{-1}\tilde \Phi_*\Phi_1^1-1)\\
		&= \det(\Psi_t-\underline{\Phi}_t).
	\end{split}
	\end{equation*} 
\end{proof}
\end{thm}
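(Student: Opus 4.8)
The plan is to specialize the intermediate identity $\det(Q^s) = a\,e^{bs}\det(1+sK)$ (established in \Cref{section: proof separate BC,section: proof general BC} and underlying \Cref{thm: main theorem}) to the linear–quadratic system \eqref{eq: functional hill formula drift}, following verbatim the strategy of the proof of \Cref{thm hill formula}: read off the two normalising constants $a$ and $b$ from the values of $\det(Q^s)$ and its $s$-derivative at $s=0$, then evaluate $\det(Q^s)$ at $s=1$. First I would compute the auxiliary flow from $h^t_{\tilde u(t)} = \langle p, B_t\tilde u(t) + A_t q\rangle + \tfrac12\langle R_t q, q\rangle$; its differential $(\tilde\Phi_t)_*$ is block upper-triangular with diagonal blocks $(\hat\Phi_t^*)^{-1}$ and $\hat\Phi_t$ and an off-diagonal block encoding the integrated cost, and from it $Z_t$ follows at once. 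The drift is what makes $\hat\Phi_t$ appear everywhere, so the one genuinely new structural feature relative to the driftless case is that the annihilator of $N = \Gamma(\hat\Phi + \dots)$ is cut out by $(\eta_0,\eta_1)\mapsto T_1\eta_1 - \eta_0$ with $T_1 = \mathrm{diag}(\hat\Phi_t^*, \hat\Phi_t^{-1}) = \underline{\Phi}_t^{-1}$, rather than by $\eta_1 - \eta_0$.

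Second, I would form $Q^s = T_1 A_1^s\tilde\Phi_*\Phi_1^sA_0^s - 1$ and evaluate at $s=0$. Absorbing $T_1$ into $A_1^s$ gives a map $\hat A_1^s\tilde\Phi_*$ of the same shape as in the driftless proof but with $G_1$ replaced by $\hat\Phi_t^*G_1\hat\Phi_t$ and $\Gamma$ replaced by $\int_0^1\hat\Phi_\tau B_\tau B_\tau^*\hat\Phi_\tau^*\,d\tau$. Using the blocks $\Gamma,\Theta,\Omega$ of $\Phi_t^0$ and $\partial_s\Phi_t^s|_{s=0}$ supplied by \Cref{lemma: quantities at s=0}, a short matrix computation yields $a = \det(Q^0) = (-1)^n\det(\Gamma)\det(G)$ with $G = G_0 + \hat\Phi_t^*G_1\hat\Phi_t$, while Jacobi's formula $\partial_s\det(Q^s) = \det(Q^s)\,\tr(\partial_s Q^s\,(Q^s)^{-1})$ gives the ratio $\partial_s\det(Q^s)|_{s=0}/a$ as an explicit trace involving $G^{-1}(G_0 - \hat\Phi_t^*G_1\hat\Phi_t + \hat\Phi_t^*\hat R)$ and $\Gamma^{-1}\Omega$.

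Third, I would compute $\tr(K)$ from \Cref{lemma: second variation general BC and trace}, working in the doubled space $\mathbb{R}^{4n}$ with the boundary condition now the graph $\Gamma(\hat\Phi)$ in place of $\Delta\times\Delta$. The new ingredients are the vectors $\tilde Z_0,\tilde Z_t,\tilde Z_1$ and the projection $pr_1$ onto the image of $\tilde\Phi_*\tilde Z_1$, which involves $L = G^{-1}$. In contrast to the periodic driftless case, the operator $(\pi_*^2 - \pi_*^1)(\tilde\Phi_*)^{-1}pr_1$ does not vanish identically once $A_t\neq 0$, so the two trace contributions do \emph{not} cancel and one is left with $\tr(K) = \tr(\Gamma^{-1}\Omega + \hat\Phi_t^*\hat R\,G^{-1}) - \dim(M)$. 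Substituting into $a(b + \tr K) = \partial_s\det(Q^s)|_{s=0}$, the terms $\Gamma^{-1}\Omega$ and $\hat\Phi_t^*\hat R\,G^{-1}$ cancel by cyclicity of the trace, and writing $G_0 - \hat\Phi_t^*G_1\hat\Phi_t = 2G_0 - G$ collapses the remainder to $b = 2\,\tr(G^{-1}G_0)$.

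Finally, since $\det(1+K) = a^{-1}e^{-b}\det(Q^1)$ and at $s=1$ the maps $A_0^1, A_1^1$ reduce to the identity, I would evaluate $Q^1 = T_1\tilde\Phi_*\Phi_1^1 - 1 = \underline{\Phi}_t^{-1}\Psi_t - 1$; factoring out $\underline{\Phi}_t$ and using that it is a symplectomorphism (hence $\det\underline{\Phi}_t = 1$) turns $\det(Q^1)$ into $\det(\Psi_t - \underline{\Phi}_t)$. Assembling $a^{-1} = (-1)^n/(\det(\Gamma)\det(G))$, $e^{-b} = e^{-2\tr(G^{-1}G_0)}$ and this terminal value gives the claimed formula. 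I expect the main obstacle to be precisely the trace computation of the third step: tracking how the drift $\hat\Phi_t$ threads through $pr_1$, $\tilde J_1$ and the iterated integrals, and verifying exactly which contributions survive — the delicate point being that the clean cancellation available under periodic boundary conditions is no longer present here.
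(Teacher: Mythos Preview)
Your proposal is correct and follows essentially the same route as the paper: compute $(\tilde\Phi_t)_*$ from the drift Hamiltonian, build $Q^s = T_1A_1^s\tilde\Phi_*\Phi_1^sA_0^s-1$ with $T_1=\underline\Phi_t^{-1}$, read off $a=(-1)^n\det(\Gamma)\det(G)$ and the $s$-derivative via \Cref{lemma: quantities at s=0}, obtain $\tr(K)$ from \Cref{lemma: second variation general BC and trace}, deduce $b=2\,\tr(G^{-1}G_0)$, and evaluate at $s=1$. One minor expository discrepancy: the paper asserts that ``the same kind of simplification as in the previous case'' occurs in the trace computation (the $(\pi_*^2-\pi_*^1)(\tilde\Phi_*)^{-1}pr_1$ contribution still drops out), whereas you anticipate that it does not vanish; since you nonetheless arrive at the identical formula $\tr(K)=\tr(\Gamma^{-1}\Omega+\hat\Phi_t^*\hat R\,G^{-1})-\dim(M)$, this is a point of phrasing rather than substance.
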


	\section{Proof of the main Theorem}

In this section we provide a proof of \Cref{thm: main theorem}. First we work with separated boundary conditions and then reduce the general case to the former. The proof is a bit long so we try to give here a concise outline. The idea is to construct an analytic function $f$ which vanishes precisely on the set $\{-1/\lambda: \lambda \in \text{Spec}(K)\}\subseteq \mathbb{R}$. Particular care is needed to show that the multiplicity of the zeros of this function equals the multiplicity of the eigenvalues of $K$. We do this in \Cref{prop: boundary value problem determinant} and \Cref{prop: multiplicity root separated} respectively. 
We show that this function decays exponentially and use a classical factorization Theorem by Hadamard to represent it as \begin{equation*}
	f(s) = a s^k e^{b s} \prod_{\lambda \in \text{Spec}(K)}(1+\lambda s)^{m(\lambda)}, \quad a, b\in \mathbb{C}, a\ne 0, k \in \mathbb{N}.
\end{equation*}

To prove the general case, we double the variables and consider general boundary conditions as separated ones. In this framework we compute the value of the parameters $a,b$ and $k$ appearing in the factorization.

\subsection{Separated boundary conditions}
\label{section: proof separate BC}
We briefly recall the notation. We are working with an extremal $\lambda_t$ with initial and final point $(\lambda_0,\lambda_1) \in Ann(N)$, where $N = N_0\times N_1$ are product boundary conditions. We are assuming that $\lambda_t$ is strictly normal and satisfies Legendre strong conditions. We work in a fixed tangent space, namely $T_{\lambda_0}T^*M$, to do so we backtrack our curve to its starting point $\lambda_0$ using the flow generated by the time dependent Hamiltonian:
\begin{equation*}
	h^t_{\tilde{u}}(\lambda) = \langle \lambda, f_{\tilde{u}(t)} (q)\rangle-\varphi_t(q,\tilde{u}(t)) .
\end{equation*}
We denote the differential of said flow by $\tilde \Phi_*$. We have a scalar product $g_i$ on $T_{\lambda_i}T^*M$, for each $i=0,1$. We assume that the orthogonal complement to the fibre at $\lambda_i$, $\Pi_i = T_{\lambda_i}T^*_{\pi(\lambda_i)}M$ is a Lagrangian subspace and that the range of $Z_0$ (and $\tilde \Phi_* Z_1$ respectively) is contained in $\Pi_0^{\perp}$ (resp. $\Pi_1^\perp$).

\begin{rmrk}
If we fix Darboux (i.e. \emph{canonical}) coordinates coming from the splitting $\Pi_i \oplus \Pi_i^\perp$ it is straightforward to check that $g_i$ takes a block diagonal form with symmetric $n \times n$ matrix $G^j_i$ on the main diagonal. Similarly we can write down the coordinate representation of the matrix $J_i$ and find:
\begin{equation*}
	g_i(X,Y) = \left \langle \begin{pmatrix}
		G^1_i &0\\ 0&G^2_i
	\end{pmatrix} \begin{pmatrix}
		X_1 \\X_2
	\end{pmatrix}, \begin{pmatrix}
		Y_1 \\Y_2
	\end{pmatrix} \right \rangle,  \quad J_i = \begin{pmatrix}
		0 &-(G^1_i)^{-1} \\ (G^2_i)^{-1} &0
	\end{pmatrix}.
\end{equation*}
\end{rmrk}
For $s \in \mathbb{R}$ (or $\mathbb{C}$) we introduce the following symplectic maps:
\begin{equation}
	\label{eq: def maps Ai}
	\begin{split}
		A_0^s (\eta) &= \eta + (1-s) J_0^{-1}pr_{\Pi_0^\perp} \eta, \quad  \eta \in T_{\lambda_0}(T^*M), \\
		A_1^s(\eta) &= \eta+ (1-s)(J_1^{-1} +\tilde \Phi_*\circ pr_{\Pi_0}\circ\tilde \Phi_*^{-1})pr_{\Pi_1^\perp} \eta, \quad  \eta \in T_{\lambda_1}(T^*M).
	\end{split}
\end{equation} 	

Notice that the transformation $A_i^s$ are indeed  symplectomorphisms. In (canonical) coordinates given by $\Pi_i$ and $\Pi_i^{\perp}$ they have the following matrix representation:
\begin{equation*}
	A_i^ s = \begin{pmatrix}
		1 &&(1-s)S_i\\ 0 &&1
	\end{pmatrix}, \quad S_i^* = S_i.
\end{equation*} 

The last map we are going introduce is two families of dilation in $T_{\lambda_0}(T^*M)$, one of the vertical subspace and one of its orthogonal complement.  Let $s \in \mathbb{R}$ (or $\mathbb{C}$) and let us define the following maps:
\begin{equation}
	\begin{split}
		\label{eq: def dilation s}
		\delta^s: T_{\lambda_0}T^*M \to  T_{\lambda_0}T^*M, \, \quad \delta^s \nu = s \, pr_{\Pi_0}\nu + pr_{\Pi_0^\perp} \nu \\
		\delta_s: T_{\lambda_0}T^*M \to  T_{\lambda_0}T^*M, \, \quad \delta_s \nu = pr_{\Pi_0}\nu + s \, pr_{\Pi_0^\perp} \nu \\
	\end{split}
\end{equation}

\begin{prop}
	\label{prop: boundary value problem determinant}
	Let $A^s_i$ be the maps given in \cref{eq: def maps Ai} and let $\Phi_1^s$ be the fundamental solution of the system:
	\begin{equation*}
		\dot \eta = Z_t^s(Z_t^s)^* J \eta, \quad Z_t^s = \delta^s Z_t,
	\end{equation*}
	
	The operator $I+sK$ restricted to $\mathcal{V}$ has non trivial kernel if and only if there exists a non zero $(\eta_0,\eta_1) \in T_{(\lambda_0,\lambda_1)} (Ann(N))$ such that
	\begin{equation*}
		A_1^s\circ \tilde{\Phi}_*\circ \Phi_1^s \circ A_0^s \, \eta_0 =\eta_1
	\end{equation*}
	In particular, the geometric multiplicity of the kernel of $I+s K$ equals the number of linearly independent solutions of the above equation. 
\end{prop}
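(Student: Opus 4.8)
The plan is to characterise the kernel of $I+sK$ on $\mathcal V$ as the set of constrained critical points of the quadratic form $u\mapsto\langle(I+sK)u,u\rangle$, turn the associated first-order optimality conditions into a linear Hamiltonian ODE, and then read off the finite dimensional boundary value problem in the statement. First I would write, using \Cref{lemma: compact part second variation} and \eqref{eq: K}, the weak Euler--Lagrange equation: $u\in\mathcal V$ lies in the kernel iff $(1-s)\langle u,v\rangle+s\,Q(u,v)=0$ for every $v\in\mathcal V$, which follows from $\langle Ku,v\rangle=Q(u,v)-\langle u,v\rangle$. Since $\mathcal V=\{u:Z(u)\in\Pi_0\}$ is the zero set of the horizontal part $pr_{\Pi_0^\perp}Z(\cdot)$ and $\Pi_0$ is Lagrangian, I would remove the constraint by a Lagrange multiplier $\nu\in\Pi_0$, requiring the identity to hold for all $v\in\mathcal H$ after adding a term $\sigma(\nu,Z(v))$. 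Introducing the linearised trajectory $\zeta_t:=\int_0^tZ_\tau u_\tau\,d\tau+Z_0u_0$ (so that $\dot\zeta_t=Z_tu_t$ and $\zeta_0=Z_0u_0$), the interior variations $v_t$ yield a pointwise expression for $u_t$ in terms of $\zeta_t$, $\zeta_1$ and $\nu$, while the boundary variations $v_0,v_1$ produce the transversality relations at the two endpoints.

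Next I would differentiate the pointwise relation for $u_t$ and substitute $\dot\zeta_t=Z_tu_t$ to obtain a closed linear ODE for $\zeta_t$. Here lies the crux of the argument: the multiplier $\nu$ takes values in the vertical subspace $\Pi_0$, and when it is combined with the $(1-s)$ and $s$ weights coming from $I+sK$, the horizontal and vertical components of the evolution get rescaled differently. A change of variable by the dilations $\delta^s$ of \eqref{eq: def dilation s} then recasts the evolution exactly as $\dot\eta=Z_t^s(Z_t^s)^*J\eta$ with $Z_t^s=\delta^sZ_t$, so that $\zeta_t$, suitably dilated, is transported by the fundamental solution $\Phi_1^s$. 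Checking that the weights reorganise precisely into $\delta^s$, and that no spurious terms survive, is the main technical obstacle, and is where the explicit algebraic form of $K$ and of the scalar product on $\mathcal H$ is genuinely used.

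I would then translate the endpoint data. The initial transversality condition, together with the identification of $\zeta_0=Z_0u_0$ with a vector $\eta_0$ in the annihilator direction, is encoded by the symplectomorphism $A_0^s$ of \eqref{eq: def maps Ai}: the correct initial datum fed into $\Phi_1^s$ is $A_0^s\eta_0$. At the final time the flow $\tilde\Phi_*$ transports the value $\Phi_1^sA_0^s\eta_0$ into the frame at $\lambda_1$, and the terminal relation coming from the $v_1$-variation and the multiplier $\Lambda(u)$ of \eqref{eq: lambda u} is absorbed into the second symplectomorphism $A_1^s$; together they give $\eta_1=A_1^s\tilde\Phi_*\Phi_1^sA_0^s\eta_0$. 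The requirement that both endpoints respect the boundary manifold is exactly $(\eta_0,\eta_1)\in T_{(\lambda_0,\lambda_1)}Ann(N)$, recalling that the ranges of $Z_0$ and $\tilde\Phi_*Z_1$ lie in the annihilator directions.

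Finally I would check that $u\mapsto(\eta_0,\eta_1)$ is a linear bijection onto the solution set of the boundary value problem. In one direction the construction above produces $(\eta_0,\eta_1)$ from $u$; conversely, given a solution $(\eta_0,\eta_1)\in T_{(\lambda_0,\lambda_1)}Ann(N)$, the flow $\Phi_1^s$ reconstructs $\zeta_t$, the pointwise Euler--Lagrange relation reconstructs $u_t$ (so $u$ is determined even when $Z_t$ fails to be injective), and the endpoint data reconstruct $u_0,u_1$ and the multiplier $\nu$. As every map involved is linear, this yields an isomorphism of vector spaces, proving simultaneously the equivalence and the equality of the geometric multiplicity of the kernel with the number of linearly independent solutions $(\eta_0,\eta_1)$.
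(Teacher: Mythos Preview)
Your approach is essentially the paper's: the Lagrange--multiplier formulation is precisely the condition $u+sKu\in\mathcal V^\perp$, which the paper writes out using \Cref{orthogonal to V} and then manipulates into the Jacobi system exactly as you describe. Two points deserve more care.

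First, the paper does not work with your $\zeta_t=\int_0^tZ_\tau u_\tau\,d\tau+Z_0u_0$ but rather with $\eta(t)=\int_0^tZ_\tau^s u_\tau\,d\tau+Z_0^s u_0+\nu$, absorbing the multiplier $\nu\in\Pi_0$ into the state from the outset. This is not cosmetic: it is what makes the ODE homogeneous and lets the $\delta^s$ bookkeeping go through cleanly, via the identities $sZ_t^*J=(Z_t^s)^*J\delta^s$ on the horizontal part while $Z_t^*J\nu=(Z_t^s)^*J\nu$ since $\nu\in\Pi_0$. Your sketch acknowledges this as ``the main technical obstacle'' but the right packaging of $\nu$ into the state is the step that resolves it. (Also, the interior Euler--Lagrange relation gives $u_t$ in terms of $\eta(t)$ and $\nu$ only; there is no dependence on the terminal value $\zeta_1$.)

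Second, your final claim that ``every map involved is linear, so this yields an isomorphism'' is not an argument, and in fact hides a genuine use of strict normality. A nonzero $\eta_0\in\Pi_0\subset T_{\lambda_0}Ann(N_0)$ is fixed by $A_0^s$, and one must rule out that the corresponding trajectory $\eta(t)$ solves the boundary value problem while the reconstructed control vanishes identically. The paper checks this explicitly: if $u\equiv0$ then $\eta$ is constant equal to $\nu\in\Pi_0$, and plugging into the Jacobi equation forces $\int_0^1X_tX_t^*\,dt\cdot\nu=0$, hence $\nu=0$ by the strict normality hypothesis. Without this step the equality of multiplicities is unproven.
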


\begin{proof}
	$I+sK$ has a non trivial kernel if and only if $$\langle u,v\rangle + \langle sKu,v\rangle=0, \quad \forall u, v \in \mathcal{V}.$$ 
	Equivalently if and only if $\exists u$ such that  $u+sKu \in \mathcal{V}^{\perp}$ (see \Cref{orthogonal to V} below for a description of $\mathcal{V}^\perp$). This is in turn equivalent to the following system
	\begin{equation}
		\label{eq: eigenvalues}
		\begin{cases}
			 (1-s)u_0 = v_0 \\
			u_t = s Z_t^*J \Big(\int_{0}^t Z_\tau u_\tau d\tau +Z_0u_0\Big ) + Z_t^*J\nu \\
			(1-s)	u_1  = s \Lambda(u)+v_1
		\end{cases}
	\end{equation}

	Let us substitute $Z_t$ with $Z_t^s = \delta^s Z_t$. It is straightforward to check, using the definition in \eqref{eq: def dilation s}, that:
	$$sZ_t^*J \int_{0}^t Z_\tau u_\tau d\tau =  (Z_t^s)^*J \int_{0}^t Z^s_\tau u_\tau d\tau.$$
	 Moreover $ Z_t^*J \nu = (Z_t^s)^* J \nu$ for any $\nu \in \Pi$ and $sZ_t^*JZ_0 = (Z_t^s)^*JZ_0^s$ since $\delta^sZ_0 = Z_0$.

	All the computations we will do from here on are aimed at rewriting \eqref{eq: eigenvalues} as a boundary value problem in $T_{\lambda_0}T^*M\times T_{\lambda_0}T^*M$. Let us start with the second equality in \eqref{eq: eigenvalues}. Define:
	 $$\eta(t) = \int_0^t Z^s_\tau u_\tau d\tau +Z^s_0u_0 +\nu, \quad\eta(0) =  Z^s_0u_0 +\nu. $$
	
	The linear constraint defining $\mathcal{V}$ implies:
	\begin{equation*}
		\begin{split}		
			Z_0u_0 +\int_0^1Z_t u_t dt +Z_1u_1 \in \Pi &\iff Z_0u_0 +\int_0^1Z^s_t u_t dt +Z^s_1u_1 \in \Pi\\
			&\iff \eta(1)+ Z^s_1u_1 \in \Pi_0\\
			&\iff \tilde{\Phi}_*(\eta(1)+ Z^s_1u_1) \in \Pi_1
		\end{split}
	\end{equation*}
	This imposes a condition on the initial and final value of $\eta(t)$. The initial one must lie in $\pi_*^{-1} TN_0$ and final one in $\pi_*^{-1}TN_1$. If we multiply by $Z_t^s$ the second equation in \eqref{eq: eigenvalues}, we are brought to consider the following problem:
	\begin{equation}
		\label{jacobi equation}
		\begin{cases}
			\dot{\eta}(t)= Z_t^s(Z_t^s)^* J \eta(t) \\
			(\pi_*\eta(0),\pi_*\eta(1)) \in T(N_0\times N_1) \\
		\end{cases}
	\end{equation}
	Now we use the remaining equations in \eqref{eq: eigenvalues} to reduce the space $\pi_*^{-1}(T(N_0\times N_1))$ to a Lagrangian one. Let us reformulate the first and third line in \eqref{eq: eigenvalues} as equations in $T_{\lambda_0} T^*M$ and $T_{\lambda_1}T^*M$.  Using the maps $Z_0$ and $\tilde{\Phi}_*Z_1$ we obtain:
	\begin{equation*}
		\begin{cases}
			(1-s) Z_0u_0 = Z_0v_0 = pr_0 J_0 \nu\\
			(1-s) \tilde{\Phi}_*Z_1u_1 = \tilde{\Phi}_*Z_1v_1 + s\tilde{\Phi}_*Z_1\Lambda(u) = pr_1 J_1 \tilde{\Phi}_* \nu +s \tilde{\Phi}_*Z_1\Lambda(u)	\end{cases}
	\end{equation*}
	
	Where we used the fact that $(v_0,v_t,v_1)$ is a vector in $\mathcal{V}^\perp$. Notice that, for $u \in \mathcal{V}$ we have that: $$s(Z_0u_0 +\int_0^1Z_t u_t dt +Z_1u_1) = Z_0u_0 +\int_0^1Z^s_t u_t dt +Z^s_1u_1$$
	This implies that the term $s\tilde{\Phi}_*Z_1\Lambda(u)$ can be rewritten as:
	\begin{equation*}
		\begin{aligned}
			s\tilde{\Phi}_*Z_1\Lambda(u)= s\, pr_1J_1\tilde{\Phi}_*(Z_0u_0 +\int_0^1Z_t u_t dt +Z_1u_1 ) = pr_1 J_1\tilde{\Phi}_*(\eta(1)+Z_1^s u_1-\nu)		
		\end{aligned} 
	\end{equation*}

	If substitute $Z_0u_0$ with $pr_{\Pi_0^\perp} \eta(0)$  we end up with the equations:
	\begin{equation*}
		\begin{cases}
			(1-s)pr_{\Pi_0^\perp} \eta(0) = pr_0 J_0 \nu = pr_0 J_0 \eta(0) \\
			(1-s) \tilde{\Phi}_*Z_1 u_1= pr_1 J_1\tilde{\Phi}_*(\eta(1)+Z_1^su_1)
		\end{cases}
	\end{equation*}
	
	Now we do the same kind of substitution for the term $Z_1^s u_1$. Using the projections on $\Pi_0$ and $\Pi_0^\perp$ and recalling that $\tilde\Phi_*$ sends $\Pi_0$ to $\Pi_1$, we have:
	\begin{equation*}
		\begin{split}
			Z_1^s u_1 &= s \,pr_{\Pi_0} Z_1u_1 + pr_{\Pi_0^\perp} Z_1u_1 = (s-1)\,pr_{\Pi_0} Z_1u_1 + Z_1u_1\\
			pr_1 J_1 \tilde{\Phi}_* Z_1^su_1 &= (s-1)\, pr_1 J_1 \tilde{\Phi}_*pr_{\Pi_0} Z_1u_1 + pr_1 J_1\tilde{\Phi}_*Z_1u_1 \\
			&=    (s-1)\,  pr_1  J_1 \tilde{\Phi}_* pr_{\Pi_0}Z_1u_1
		\end{split}
	\end{equation*}	
	
	Last equality being due to the fact that the image of $\tilde{\Phi}_*Z_1$ is isotropic and thus $J_1\imm(\tilde \Phi_*Z_1) \subset \imm( \tilde{\Phi}_*Z_1)^{\perp}$. Moreover $\tilde \Phi_*Z_1u_1$ coincides with the projection of $-\tilde \Phi_*\eta(1)$ on $\Pi_1^\perp$. Thus we are left with:
	\begin{equation}
		\label{eq: bvp both points}
		\begin{cases}
			(1-s)pr_{\Pi_0^\perp} \eta(0) = pr_0 J_0 \nu = pr_0 J_0 \eta(0) \\
			(1-s)(- pr_{\Pi_1^\perp}\tilde \Phi_* \eta(1) + pr_1  J_1 \tilde{\Phi}_* pr_{\Pi_0}Z_1u_1)= pr_1 J_1\tilde \Phi_*\eta(1)
		\end{cases}
	\end{equation}
	
	It is straightforward to check that $pr_1 J_1 \tilde \Phi_* pr_{\Pi_0} Z_1u_1$ depends only on the projection of $Z_1u_1$ on $\Pi_0^\perp$. Moreover, expanding  $1 = \tilde \Phi_* \circ \tilde \Phi_*^{-1}$ and using the relation $\tilde \Phi_*Z_1u_1=-pr_{\Pi_1^\perp}\tilde \Phi_*\eta(1)$, the second equality in \eqref{eq: bvp both points} can be rewritten as:
	\begin{equation}
		\label{eq: bvp final point}
		(s-1) pr_{\Pi_1^\perp} \tilde \Phi_* \eta(1) = pr_1J_1 \tilde{\Phi}_* \eta(1)+ (1-s) pr_1J_1\tilde \Phi_* pr_{\Pi_0} \tilde \Phi_*^{-1} pr_{\Pi_1^\perp}\tilde \Phi_*  \eta(1)
	\end{equation}
	 
	If $s =1$, the equations reduce to $pr_0(J_0\eta(0)) =0$ and $pr_1 J_1 \tilde \Phi_*\eta(1)=0$. Consider the first case, the relation is equivalent to:
	\begin{equation*}
		\sigma (\eta(0),Z_0w_0) = g_0(J_0\eta(0),Z_0 w_0) = g_0(pr_0 J_0\eta(0),Z_0 w_0) =0, \, \forall w_0 \in \mathbb{R}^{\dim(N_0)}.
	\end{equation*}
	Thus we are looking for solution starting from $T_{\lambda_0} Ann(N_0)$. Similarly setting $s=1$ in the second equality we find that $\eta(1)$ must lie inside $T_{\lambda_1} Ann(N_1)$.

	Now, for $s\ne 1$, we want to interpret the boundary conditions as an analytic family of Lagrangian subspaces depending on $s$. To do so we employ the following linear map defined in \eqref{eq: def maps Ai}:
	\begin{equation*}
		A_0^s (\eta) = \eta + J_0^{-1}(1-s) pr_{\Pi_0^\perp} \eta 
	\end{equation*}
	
	If $\eta \in T_{\lambda_0} Ann(N_0)$ we have that $pr_{\Pi_0^\perp} \eta = pr_0 \eta$  and $pr_0 J_0 \eta=0$ and thus:
	\begin{equation*}
		\begin{aligned}
			pr_{\Pi_0^\perp}(A_0^s(\eta)) &= pr_{\Pi_0^\perp}(\eta)  \in \imm(Z_0) \\
			pr_0 J_0(A_0^s(\eta)) &= pr_0(J_0 \eta+(1-s) pr_{\Pi_0^{\perp}}\eta) \\& =pr_0 J_0\eta +(1-s) pr_{\Pi_0^\perp} \eta \\ 
			&=(1-s) pr_{\Pi_0^\perp}(A_0^s(\eta))
		\end{aligned}
	\end{equation*}
	
	So we have shown that $A_0^s(T_{\lambda_0} Ann(N_0))$ is precisely the space satisfying the first set of equations. A similar argument works for the final point. Let us recall the definition of $A_1^s$ given in \eqref{eq: def maps Ai}:
	\begin{equation*}
		A_1^s(\eta) = \eta+ (1-s)(J_1^{-1}+\tilde \Phi_* pr_{\Pi_0} \tilde \Phi_*^{-1} ) pr_{\Pi_1^\perp} \eta.
	\end{equation*} 	
	
	Now, we check that the boundary condition for the final point are satisfied if and only if $A_1^s \circ \tilde \Phi_* \,\eta(1)\in T_{\lambda_1}Ann(N_1)$. In fact, take any $\eta$ in $T_{\lambda_1}Ann(N_1)$, it holds:
	\begin{equation*}
		\begin{split}
			pr_{\Pi_1^\perp}(A_1^s)^{-1}\eta &= pr_{\Pi_1^\perp}\eta, \\
			pr_{\Pi_1}(A_1^s)^{-1}\eta &=  pr_{\Pi_1}\eta+(s-1)(J^{-1}_1+\tilde \Phi_* pr_{\Pi_0} \tilde \Phi_*^{-1} ) pr_{\Pi_1^\perp} \eta ,\\
			pr_1 J_1 (A_1^s)^{-1} \eta &=(s-1) pr_1 (1+J_1 \tilde \Phi_* pr_{\Pi_0} \tilde \Phi_*^{-1} )pr_{\Pi_1^\perp} \eta. 
		\end{split}
	\end{equation*}
	It is straightforward to substitute the last equality in \eqref{eq: bvp final point} and check that indeed $(A^s_1)^{-1}(T_{\lambda_1} Ann(N_1))$ is the right space. 
	
	Let us call $\Phi_1^s$ the fundamental solution of \eqref{jacobi equation}  at time $1$ and denote by $\Gamma(\Phi_1^s)$ its graph. It follows that $s \in \mathbb{R} \setminus \{0\}$ is in the kernel of $1+s K$ if and only if:
	\begin{equation*}
		\Gamma(\tilde \Phi_* \circ \Phi_1^s) \cap A_0^s(T_{\lambda_0} Ann(N_0))\times (A_1^s)^{-1}(T_{\lambda_1} Ann(N_1)) \ne (0)
	\end{equation*}
	which is equivalent to the condition:
	\begin{equation}
		\label{eq: intersection graph annihilator}
		\Gamma(A_1^s \circ \tilde \Phi_* \circ \Phi_1^s \circ A_0^s) \cap  T_{\lambda_0}Ann(N_0) \times T_{\lambda_1} Ann(N_1)\ne (0)
	\end{equation}
Now we prove the part about the multiplicity. Suppose that two different controls $u$ and $v$ give the same trajectory $\eta_t$ solving \eqref{jacobi equation}. Since the maps $Z_0$ and $Z_1$ are injective it must hold that $v_0 =u_0$ and $v_1 = u_1$. Moreover $\int_0^t Z_\tau u_\tau d\tau = \int_0^t Z_\tau v_\tau d\tau$ $\forall \, t \in [0,1]$ and thus:
\begin{equation*}
	Ku = Z_t^* J\int_0^t Z_\tau u_\tau d\tau = Z_t^* J\int_0^t Z_\tau v_\tau d\tau =  K v.
\end{equation*}
However, Volterra operator are always injective and thus $u=v$.

Vice-versa, consider $u = 0$ and see whether you get solutions of the system above that do not correspond to any variation. Since $u_0$ and $u_1$ are both zero we are considering solution starting from the fibre and reaching the fibre. Plugging in $u_t =0$ we obtain:
\begin{equation*}
	0 = \dot{\eta} =  Z_t^s(Z_t^s)^*J\eta=Z_t^s(Z_t^s)^*J\nu
\end{equation*}

However $pr_{\Pi^\perp}Z_t^s(Z_t^s)J \nu  = X_t X_t^* \nu$ and by assumption the matrix $\int_0^1 X_t X_t^* dt$ is invertible. Thus we get a contradiction.
\end{proof}
	
	The following Lemma was used in the proof of \Cref{prop: boundary value problem determinant}. Gives the orthogonal complement to $\mathcal{V}$ inside $\mathcal{H}$, using the Hilbert structure introduced in \Cref{def:scalar_product_on_H}. We will denote by the symbol $\perp_i$ the orthogonal complement, in $T_{\lambda_i}T^*M$, with respect to the scalar product $g_i$.  	
	\begin{lemma}
		\label{orthogonal to V}
		With this choice of scalar product the orthogonal complement to $\mathcal{V}$ is given by: $$\mathcal{V}^\perp = \{(v_0,Z_t^* J\nu,v_1) : \nu \in \Pi\}$$
		where $v_0$ and $v_1$ are determined by the following conditions:$$  Z_0v_0 - J_0\nu \in \imm Z_0^{\perp_0},\quad  \tilde \Phi_* Z_1v_1 - J_1\tilde \Phi_*\nu \in \imm \tilde \Phi_*Z_1^{\perp_1} .$$
		\begin{proof}
			Suppose that $u \in \mathcal{V}^{\perp}$. Let us test it against infinitesimal variations that fix the endpoints, i.e. such that $u_i = 0$ and $\int_0^1Z_t u_t dt  \in \Pi$. Recall that $\Pi$ is Lagrangian, thus the condition $\int_0^1Z_t u_t dt \in \Pi$ can be equivalently formulated as $\sigma(\int_0^1Z_t u_t dt,\nu) =0$ for all $\nu \in \Pi$. Hence, the subspace $\{u : u_i=0,\int_0^1Z_t u_t dt \in \Pi \}$ is the intersection of the kernels of $u_t \mapsto \int_0^1 \sigma(Z_t u_t,\nu) dt $. It follows that:
			\begin{equation*}
				\langle v, u\rangle  = -\int_0^1 \langle  v_t,u_t\rangle  dt  = 0, \, \forall u \in V \iff v_t = \sigma (\nu,Z_t \cdot ), \quad \nu \in \Pi.
			\end{equation*}
			
			Hence $v_t = Z_t^*J \nu$. 
			Now, take $u \in \mathcal{V}$ and compute:
			\begin{equation*}
				\langle v, u\rangle  =  \sigma (\nu , \int_0^1 Z_tu_t dt) +\sigma (J_0^{-1}Z_0v_0,Z_0u_0)+\sigma (J_1^{-1}\tilde \Phi_*Z_1v_1,\tilde \Phi_*Z_1u_1).
			\end{equation*}
			
			Since $u \in \mathcal{V}$, we have:
			\begin{equation*}
				\sigma \left (\nu , \int_0^1 Z_t u_t dt \right)  =  -\sigma (\nu, Z_0u_0+Z_1u_1).
			\end{equation*}
			
			It follows that: 
			\begin{equation*}
				\begin{split}
					\langle v, u \rangle &= -\sigma (\nu, Z_0u_0+Z_1u_1)+\sigma (J_0^{-1}Z_0v_0,Z_0u_0)+\sigma (J_1^{-1}\tilde \Phi_*Z_1v_1,\tilde \Phi_*Z_1u_1) \\
					& = \sigma(J_0^{-1}Z_0v_0-\nu,Z_0u_0)+\sigma(J_1^{-1}\tilde \Phi_*Z_1v_1-\tilde \Phi_*\nu,\tilde \Phi_*Z_1u_1).
				\end{split}
			\end{equation*}
			
			Since we are assuming that $\int_0^1X_tX_t^*dt >0$, the image of the map $u_t \mapsto \pi_*\int_0^1Z_t u_t dt$ is the whole $T_{q_0}M$. In particular, for any $u_0$, is possible to find variations of the form $(u_0,u_t,0) \in \mathcal 
			V$. An analogous statements holds for variations of the form $(0,u_t,u_1).$

			Hence, if $\langle u, v \rangle = 0$ $\forall u \in \mathcal{V}$, then both 	$\sigma(J_0^{-1}Z_0v_0-\nu,Z_0u_0)$ and $\sigma(J_1^{-1}\tilde \Phi_*Z_1v_1-\tilde \Phi_*\nu,\tilde \Phi_*Z_1u_1)$ must be zero at the same time.
			
			Moreover, this implies that $v_0$ and $v_1$ are completely determined by the value of $\nu$. Finally notice that:
			\begin{equation*}
				\begin{split}
					\sigma(J_0^{-1}Z_0 v_0-\nu,Z_0 u_0) =0 \, \forall\, u_0 &\iff Z_0 v_0-J_0\nu \in \imm(Z_0)^{\perp_0} \\
					\sigma(J_1^{-1}\tilde \Phi_*Z_1 v_1-\tilde \Phi_*\nu,Z_1 u_1)=0 \,\forall\, u_1 &\iff \tilde \Phi_*Z_1 v_1-J_1\tilde \Phi_*\nu \in \imm(\tilde \Phi_*Z_1)^{\perp_1} 
				\end{split}
			\end{equation*} 
		\end{proof}
	\end{lemma}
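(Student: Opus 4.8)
The plan is to determine $\mathcal{V}^\perp$ in two stages: first pin down the middle ($L^2$) component of any $v\in\mathcal{V}^\perp$ by testing against variations that fix both endpoints, and then recover the constraints on the finite-dimensional components $v_0,v_1$ by testing against the remaining variations. The reverse inclusion will follow by running the computation backwards.

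First I would isolate the subspace $\mathcal{V}_0 = \{(0,u_t,0)\in\mathcal{V}\} = \{(0,u_t,0) : \int_0^1 Z_t u_t\,dt \in \Pi\}$. Any $v\in\mathcal{V}^\perp$ is in particular orthogonal to $\mathcal{V}_0$, and on this subspace the scalar product reduces to the (positive definite) middle term. Since $\Pi$ is Lagrangian, the constraint $\int_0^1 Z_t u_t\,dt\in\Pi$ is equivalent to the family of linear conditions $\int_0^1 \sigma(Z_t u_t,\nu)\,dt = 0$ for all $\nu\in\Pi$; that is, $\mathcal{V}_0$ is the intersection of the kernels of the functionals $u_t\mapsto \int_0^1\langle Z_t^* J\nu, u_t\rangle\,dt$. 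Its orthogonal complement inside the middle factor is therefore the span of the Riesz representatives $t\mapsto Z_t^* J\nu$, and as $\Pi$ is finite dimensional this span is closed. I conclude that $v_t = Z_t^* J\nu$ for some $\nu\in\Pi$, which already forces the middle component to have the asserted shape.

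Next I would substitute $v_t = Z_t^* J\nu$ and test $v$ against a general $u\in\mathcal{V}$. Rewriting the middle term as $\sigma(\nu,\int_0^1 Z_t u_t\,dt)$ and using that $\int_0^1 Z_t u_t\,dt = w - Z_0 u_0 - Z_1 u_1$ for some $w\in\Pi$, the Lagrangian property of $\Pi$ makes $\sigma(\nu,w)$ vanish, so the middle term collapses to $-\sigma(\nu,Z_0 u_0) - \sigma(\tilde\Phi_*\nu,\tilde\Phi_* Z_1 u_1)$, where the $u_1$-contribution has been transported through the symplectomorphism $\tilde\Phi_*$. Regrouping, $\langle v,u\rangle$ becomes $\sigma(J_0^{-1}Z_0 v_0 - \nu,\, Z_0 u_0) + \sigma(J_1^{-1}\tilde\Phi_* Z_1 v_1 - \tilde\Phi_*\nu,\, \tilde\Phi_* Z_1 u_1)$; converting each summand back into $g_i$ through $\sigma(X,Y)=g_i(J_iX,Y)$ gives $g_0(Z_0 v_0 - J_0\nu,\, Z_0 u_0)$ and $g_1(\tilde\Phi_* Z_1 v_1 - J_1\tilde\Phi_*\nu,\, \tilde\Phi_* Z_1 u_1)$.

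The crux is then to argue that these two boundary pairings must vanish \emph{separately}, and this is the step I expect to be the main obstacle. It requires producing, for an arbitrary prescribed $u_0$ (resp. $u_1$), an admissible variation in $\mathcal{V}$ whose other endpoint datum is zero. This is exactly where the standing assumption $\int_0^1 X_t X_t^*\,dt > 0$ (with $X_t = \pi_* Z_t$) enters: it makes $u_t\mapsto \pi_*\int_0^1 Z_t u_t\,dt$ surjective onto $T_{q_0}M$, so one can solve $\pi_*\int_0^1 Z_t u_t\,dt = -\pi_* Z_0 u_0$ and realise $(u_0,u_t,0)\in\mathcal{V}$, and symmetrically $(0,u_t,u_1)\in\mathcal{V}$. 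Given this decoupling, vanishing for all $u_0$ forces $Z_0 v_0 - J_0\nu \in \imm(Z_0)^{\perp_0}$ and vanishing for all $u_1$ forces $\tilde\Phi_* Z_1 v_1 - J_1\tilde\Phi_*\nu\in\imm(\tilde\Phi_* Z_1)^{\perp_1}$, which are the stated conditions; injectivity of $Z_0$ and $\tilde\Phi_* Z_1$ then shows that $v_0$ and $v_1$ are uniquely determined by $\nu$. Finally I would verify the reverse inclusion by observing that the displayed computation is reversible, so that every $v$ of the given form annihilates all of $\mathcal{V}$.
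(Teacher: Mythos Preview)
Your proposal is correct and follows essentially the same approach as the paper: first determine the $L^2$ component by testing against the fixed-endpoint subspace $\mathcal{V}_0$ (using that $\Pi$ is Lagrangian), then compute $\langle v,u\rangle$ for general $u\in\mathcal{V}$, use the constraint and the surjectivity assumption $\int_0^1 X_tX_t^*\,dt>0$ to decouple the two boundary terms, and read off the $g_i$-orthogonality conditions. Your explicit mention of the reverse inclusion is a small addition not spelled out in the paper, but otherwise the arguments coincide step for step.
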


\begin{rmrk}
	If we complexify all the subspaces involved in the proof of \Cref{prop: boundary value problem determinant}, i.e.  tensor with $\mathbb{C}$ we can take also $s \in \mathbb{C}$.
\end{rmrk}

 We can reformulate the intersection problem in the statement of \Cref{prop: boundary value problem determinant} as follows. Let $ \pi_{N_1}$ the orthogonal projection, with respect to $g_1$, onto the subspace $T_{\lambda_1} Ann(N_1)^{\perp}$ and define a map $Q^s$ as:
\begin{equation}
	\label{eq: def Q separated BC}
	Q^s : T_{\lambda_0}Ann(N_0) \to T_{\lambda_1}Ann(N_1)^{\perp}, \quad Q^s(\eta) = \pi_{N_1}(A_1^s\Phi_1^s A_0^s)(\eta). 
\end{equation}
Let us fix now two bases, one of $T_{\lambda_0} Ann(N_0)$ and one of $T_{\lambda_1}Ann(N_1)$. Construct two $2n \times n$ matrices using the elements of the chosen basis.  Let us call the resulting objects $T_0$ and $T_1$ respectively. It follows that $J_1T_1$ is a basis of $T_{\lambda_1} Ann(N_1)^\perp$.
Define the function $\det(Q^s)$ as the determinant of the $n \times n$ matrix $T_1^*J_1 A_1^s\Phi_1^s A_0^s T_0$. Clearly, different choices of basis give simply a scalar multiple of $\det(Q^s)$ and thus is well defined:
\begin{equation*}
	\det(Q^s) = \frac{\det (T_1^*J_1 A_1^s\Phi_1^s A_0^sT_0)}{\det(T_0^*T_0)^{1/2} \det(T_1^*J_1J_1^*T_1)^{1/2}}
\end{equation*}
 Moreover $\det(Q^s) |_{s=s_0} =0 $ if and only if there exists at least a solution to our boundary problem. Notice that map $s \mapsto \det (Q^s)$ is analytic in $s$ since the fundamental matrix is an entire map in $s$ (see \cite{determinant}[Proposition 4]). The following Proposition shows that the multiplicity of any root $s_0 \ne 0$ is equal to the number of independent solutions to the boundary value problem.
\begin{prop}
	\label{prop: multiplicity root separated}
	The multiplicity of any root $s_0 \ne 0$ of $\det Q^s$ equals the dimension of the kernel of $Q^s$.
	\begin{proof}
		The proof is done in two steps. First we show that the equation $\det(Q^s)=0$ is equivalent to $\det(R^s) =0$ where $R^s$ is a symmetric matrix, analytic in $s$. Once one knows this, it suffices to compute $\partial_s R^s$ and show that it is non degenerate to prove that the multiplicity of the equation is the same as the dimension of the kernel. 
		\proofpart{Replace $Q^s$ with a symmetric matrix} Let us restrict to the case $s \in \mathbb{R}$, since all the roots are real. As shown in \eqref{eq: intersection graph annihilator} and remarked above, the determinant of the matrix $Q^s$ is zero whenever the graph of $A_1^s \tilde \Phi_*\Phi_1^s A_0^s$ intersect the  subspace $L_0 = T_{(\lambda_0,\lambda_1)} (Ann(N_0 \times N_1))$. Suppose that $s_0$ is a time of intersection and choose as coordinates in the Lagrange Grassmannian $L_0$ and another subspace $L_1$ transversal to both $L_0$ and $\Lambda_s :=\Gamma(A_1^s\tilde \Phi_* \Phi_1^s  A_0^s)$. This means that, if $(T_{\lambda_0}T^*M)^2 \approx \{ (p,q)| p,q\in \mathbb{R}^{2n}\}$, we identify $L_0 \approx \{q=0\}$ and $L_1 \approx \{p=0\}$. 
		
		In this coordinates $\Lambda_s$ is given by the graph of a symmetric matrix, i.e. is the following subspace $\Lambda_s = \{(p,R^sp)\}$ where again $R^s$ is analytic in $s$.
		
		The quadratic form associated to the derivative $\partial_s R(s)$ can be interpreted as the velocity of the curve $s \mapsto \Lambda_s$ inside the Grassmannian, it is possible to compute it choosing an arbitrary base of $\Lambda_s$ and an arbitrary set of coordinates. Invariants such as signature and nullity do not change (see for example \cite{beschastnyi_morse,symplecticMethods} or \cite{agrachev_quadratic_paper}). Take a curve $\lambda_s = (p_s,R^sp_s)$ inside $\Lambda_s$ then one has:
		\begin{equation*}
			S(\lambda_s) = \sigma(\lambda_s,\dot{\lambda}_s) = \langle p_s, \partial_s R^s p_s\rangle 
		\end{equation*}
		
		Recall that we will be using the symplectic form given by $(-\sigma_{\lambda_0})\oplus \sigma_{\lambda_1}$, in order to have that graph of a symplectic map is a Lagrangian subspace. 
		
		\proofpart{Replace $\Lambda_s$ with a positive curve}
		We slightly modify our curve to exploit an hidden positivity of the Jacobi equation. We substitute the fundamental solution $\Phi_1^s$ with the following map:
		\begin{equation*}
			\Psi^s = \Psi^s_1= \delta_s \Phi^s_1 \delta_{\frac{1}{s}}.
		\end{equation*}
		
		It is straightforward to check that $\Psi^s$ is again a symplectomorphism and that it is the fundamental solution of the following ODE system at time $t=1$:
		\begin{equation}
			\label{eq: rescaled jacobi eq}
			\dot \Psi_t^s = s Z_tZ_t^*J\Psi_t^s, \quad \Psi_0^s = Id.
		\end{equation}
		
		On one hand we are introducing a singularity at $s=0$ but on the other hand we are going to show that the graph of $\Psi^s$ becomes a monotone curve and its velocity is fairly easy to compute. 
		
		First of all, hoping that the slight abuse of notation will not create any confusion, let us introduce a family of dilations similar to the $\delta_s$,$\delta^s$ also in $T_{\lambda_1}T^*M$. The definition is analogous to the one in \eqref{eq: def dilation s} but with $\Pi_1$ and $\Pi_1^\perp$ instead of $\Pi_0$ and $\Pi_1^\perp$. We will denote them with the same symbol.
		
		Let us consider the following symplectomorphisms:
		\begin{equation*}
		\delta_s A_1^s \tilde \Phi_*\Phi_1^s A_0^s \delta_{\frac{1}{s}} = 	\delta_s A_1^s \tilde \Phi_* \delta_{\frac{1}{s}} \Psi_s \delta_s A_0^s \delta_{\frac{1}{s}}
		\end{equation*}
		Notice that the dilations $\delta_s$ preserve the subspaces $T_{\lambda_i}Ann(N_i)$ and thus the intersection points between the graph of the above map and the subspace $T_{(\lambda_0,\lambda_1)}Ann(N_0\times N_1)$ are unchanged. Let us rewrite the maps $\delta_sA_0^s\delta_{\frac{1}{s}}$ and $\delta_s A_1^s  \tilde \Phi_*\delta_{\frac{1}{s}}$. For the former:
		\begin{equation*}
			\delta_sA_0^s\delta_{\frac{1}{s}} = \delta_s(1+(1-s)J_0^{-1}pr_{\Pi_0^\perp})\delta_{\frac{1}{s}} = 1+ \frac{1-s}{s} J_0^{-1}pr_{\Pi_0^\perp} = B_0^s
		\end{equation*}
		For the latter, a computation in local coordinates and the fact that the dilations $\delta_s$ and $\tilde\Phi_*$ do not commute yield:
		\begin{equation*}
			\begin{split}
				\delta_sA_1^s\tilde \Phi_*\delta_{\frac{1}{s}} &= \delta_s(1+(1-s)(J_1^{-1} +\tilde \Phi_*pr_{\Pi_0}\tilde \Phi_*^{-1})pr_{\Pi_1^\perp})\tilde \Phi_*\delta_{\frac{1}{s}} \\&= (1+ \frac{1-s}{s} J_1^{-1}pr_{\Pi_1^\perp})\tilde \Phi_* = B_1^s\tilde \Phi_*
			\end{split}
		\end{equation*}
		
		Thus we take, for $s\ne 0$, as curve $\Lambda_s := \Gamma(B_1^s \tilde \Phi_* \Psi^s B_0^s)$, the graph of the symplectomorphism just introduced. Notice that $\Psi^s$ is actually analytic, the singularity at $s=0$ comes only form the maps $B_i^s$.

		\proofpart{Computation of the velocity}
		Now we compute the velocity of the graph of $B_1^s \tilde\Phi_* \Psi^s  B_0^s$. Take a curve  $\lambda_s = (\eta, B_1^s\tilde \Phi_* \Psi^s B_0^s\eta)$ inside the $\Lambda_s$ and let us compute the quadratic form associated to the velocity:
		\begin{equation*}
			\begin{split}
				S(\lambda_s) &=-\sigma (\eta,\partial_s \eta)+\sigma(B_1^s \tilde\Phi_* \Psi^s B_0^s\eta,\partial_s( B_1^s \tilde\Phi_* \Psi^s B_0^s\eta)) \\
				&= \sigma(B_1^s \tilde\Phi_* \Psi^s B_0^s\eta,(\partial_s B_1^s ) \tilde\Phi_* \Psi^s B_0^s\eta)+\sigma( \Psi^s  B_0^s\eta,(\partial_s \Psi^s) B_0^s\eta)+\\& \qquad +\sigma(B_0^s\eta,\partial_s(B_0^s)\eta)
			\end{split}
		\end{equation*}
		
		Let us consider the terms of the type $\sigma(B_i^s x,\partial_s B_i^s x)$. It immediate to compute the derivative in this case, recall that $B_i^s x = x + \frac{(1-s)}{s}J_i^{-1} pr_{\Pi_i^\perp} x$. It follows that $\partial_s B_i^s   = -\frac{1}{s^2} J_i^{-1}pr_{\Pi_i^\perp} $ thus the first and last term read as:
		\begin{equation*}
			\begin{split}
				\sigma(B_1^s \xi,(\partial_s B_1^s ) \xi) &= - \frac{1}{s^2}\sigma(\xi, J_1^{-1} pr_{\Pi_1^\perp}\xi) \\ 
				&= \frac{1}{s^2} g_1( pr_{\Pi_1^\perp}\xi, pr_{\Pi_1^\perp}\xi), \quad\text{where } \xi = \tilde \Phi_*\Psi^s B_0^s\eta,\\
				\sigma(\partial_s(B_0^s)\eta, B_0^s\eta)&= - \frac{1}{s^2} \sigma(B_0^s \eta , J_0^{-1} pr_{\Pi_0^\perp} \eta) =  -\frac{1}{s^2} g_0(J_0\eta, J_0^{-1} pr_{\Pi_0^\perp} \eta)\\
					&= \frac{1}{s^2}g_0(pr_{\Pi_0^\perp} \eta,pr_{\Pi_0^\perp} \eta).
			\end{split}
		\end{equation*}
		
		Notice we used the fact that $J_i$ (and thus  $J_i^{-1}$) is $g_i-$skew symmetric. Now we rewrite the middle term. We present it as the integral of its derivative using the equation for $\Psi_t^s$. Let us use the shorthand notation $x = B_0^s\eta$. We obtain:
		\begin{equation*}			
			\begin{split}
				\frac{d}{dt} \left (\sigma(\Psi_t^s x,(\partial_s \Psi_t^s) x)\right) = \sigma(\partial_t\Psi_t^s x,(\partial_s \Psi_t^s) x) + \sigma( \Psi_t^s x,(\partial_s \partial_t\Psi_t^s) x)\\
				= 	s \sigma(Z_tZ_t ^* J \Psi_t^sx,\partial_s\Psi_t^s x)+s \sigma(\Psi_t^s x,Z_tZ_t^* J \partial_s\Psi_t^sx) \\
				+\sigma(\Psi_t^s x,\partial_s\big(s Z_tZ_t^*\big) J \Psi_t^sx).
			\end{split}
		\end{equation*} 
		
		The first and second term have opposite sign and thus cancel out. What remains is:
		\begin{equation*}
			\frac{d}{dt} \left (\sigma((\partial_s \Psi_t^s) x, \Psi_t^s x)\right) =\sigma(\Psi_t^s x, Z_t Z_t^* J \Psi_t^s x)= g(Z_t^*J \Psi_t^s x,Z_t^*J \Psi_t^s x)
		\end{equation*}
		Integrating over $[0,1]$ and using the fact that $\partial_s \Psi_t^s \vert_{(0,0)}= 0$ we get that:
		\begin{equation*}
			\begin{split}
				\sigma(\partial_s\Psi^s x,\Psi^s x) &= \int_0^1 g (Z^*_t J \Psi_t^s x, Z^*_t J \Psi_t^s x) dt.
			\end{split}
		\end{equation*}
		
		Using the notation $\vert \vert \cdot \vert \vert$ to denote the norm with respect to the corresponding metric and summing everything up we find the following expression for the velocity of our curve:
		\begin{equation*}
			S_s(\lambda_s) = \frac{1}{s^2} \left ( \|pr_{\Pi_0^{\perp}}\eta\|^2+\|pr_{\Pi_1^\perp} \Psi^s B_0^s \eta\|^2 \right)+\int_0^1 \|Z_t^*J\Psi_t^s B_0^s \eta \|^2 dt
		\end{equation*}
		
		Since each term of the sum is non negative $S_s(\lambda_s)$ is zero if and only if each term is zero. From the first one we obtain that $\eta$ must be contained in the fibre.		
		Notice that $B^s_0$ acts as the identity on $\Pi_0$ and thus in this case $\|pr_{\Pi_1^\perp} \tilde\Phi_*\Psi^s B_0^s \eta\|^2 = \|pr_{\Pi_1^\perp} \Psi^s \eta\|^2$.
		
		It follows that $\Psi_t^s B^s \eta =\Psi_t^s  \eta  $ is a solution of the Jacobi equation \eqref{jacobi equation} starting and reaching the fibre (recall that $\tilde \Phi_*(\Pi_0) = \Pi_1$). Let us consider now the third piece, since the integrand is positive it must hold that for almost any $t$, $Z_t^*J\Psi_t^s \eta=0$. If we multiply this equation by $Z_t$ we find that:
		\begin{equation*}
			Z_tZ_t^*J \Psi_t^s \eta=0 =\dot\Psi_t^s \eta.
		\end{equation*}
		
		It follow that we are dealing with a constant solution starting and reaching the fibre. However this contradicts the assumption that the matrix $\int_0^1 X_t X_t^* dt$ is non degenerate. In fact, if we substitute a non zero constant solutions starting from the fibre in \eqref{jacobi equation}, we find that $pr_{\Pi_0^\perp} (\eta) = \int_0^1 X_tX_t^* dt\,  \eta \ne 0$
	\end{proof}
\end{prop}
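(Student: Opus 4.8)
The statement to establish is that the order of vanishing of the analytic function $s \mapsto \det(Q^s)$ at a root $s_0 \ne 0$ coincides with $\dim \ker Q^{s_0}$; that is, algebraic and geometric multiplicities agree. The plan is to reduce this to a statement about a symmetric analytic matrix family, and then to exploit a monotonicity (positivity) property of the associated curve in the Lagrange Grassmannian.

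First I would reinterpret the vanishing of $\det(Q^s)$ geometrically. By \Cref{prop: boundary value problem determinant}, $\det(Q^{s_0}) = 0$ precisely when the Lagrangian $\Lambda_s = \Gamma(A_1^s \tilde\Phi_* \Phi_1^s A_0^s)$ meets $L_0 := T_{(\lambda_0,\lambda_1)}Ann(N_0\times N_1)$, and $\dim\ker Q^{s_0}$ equals the dimension of that intersection. Fixing Darboux coordinates in which $L_0$ is the position plane $\{q=0\}$ and a Lagrangian transversal to both $L_0$ and $\Lambda_{s_0}$ is the momentum plane, the curve $\Lambda_s$ is, for $s$ near $s_0$, the graph $\{(p,R^s p)\}$ of a symmetric matrix $R^s$ depending analytically on $s$. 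Then $\Lambda_s \cap L_0 \cong \ker R^s$, and the change of frame contributes a nonvanishing analytic factor, so it suffices to show that $\det(R^s)$ vanishes at $s_0$ to order exactly $\dim \ker R^{s_0}$.

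The heart of the matter is then the derivative $\partial_s R^s$ on the kernel. By Rellich's analytic perturbation theorem the eigenvalues of the symmetric family $R^s$ may be chosen analytic in $s$, and the derivative at $s_0$ of an eigenvalue crossing zero equals $\langle v, \partial_s R^{s_0} v\rangle$ for the corresponding eigenvector $v \in \ker R^{s_0}$. If this quadratic form is definite on $\ker R^{s_0}$, every vanishing eigenvalue has a simple zero and $\det(R^s) = \prod_i \mu_i(s)$ vanishes to order exactly $\dim\ker R^{s_0}$. Crucially, the form $\langle p, \partial_s R^s p\rangle$ is nothing but the intrinsic velocity $\sigma(\lambda_s,\dot\lambda_s)$ of the curve $s\mapsto\Lambda_s$ along a path $\lambda_s\in\Lambda_s$, hence coordinate independent, and I may compute it in whatever parametrisation is most convenient.

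To make the positivity manifest I would conjugate by the dilations $\delta_s$, replacing $\Phi_1^s$ by the rescaled flow $\Psi^s = \delta_s \Phi_1^s \delta_{1/s}$, which solves the cleaner equation $\dot\Psi_t^s = s\,Z_tZ_t^*J\Psi_t^s$, and replacing $A_i^s$ by the corresponding maps $B_i^s$. Since the $\delta_s$ preserve $T_{\lambda_i}Ann(N_i)$, the intersection data (and hence the nullity) are untouched. Differentiating the velocity form under the integral sign and using the $g_i$-skew-symmetry of $J_i$, the cross terms cancel and one is left with a sum of nonnegative contributions,
$$S_s = \frac{1}{s^2}\left(\|pr_{\Pi_0^\perp}\eta\|^2 + \|pr_{\Pi_1^\perp}\Psi^s B_0^s\eta\|^2\right) + \int_0^1 \|Z_t^*J\Psi_t^s B_0^s\eta\|^2\,dt.$$
Vanishing of $S_s$ forces $\eta$ into the fibre $\Pi_0$ and then $Z_t^* J \Psi_t^s\eta = 0$ almost everywhere, so $\dot\Psi_t^s\eta = 0$ and $\Psi_t^s\eta$ is a constant solution confined to the fibre, contradicting $\int_0^1 X_tX_t^*\,dt > 0$. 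Thus $S_s$ is positive definite, the velocity is nondegenerate, and the conclusion follows. The step I expect to be the main obstacle is exactly this velocity computation: producing a closed form for $\partial_s R^s$ that visibly reveals its definiteness. Naive differentiation in arbitrary coordinates is opaque; the device of passing to the monotone rescaled flow $\Psi^s$ — together with verifying that this conjugation leaves the nullity unchanged — is what unlocks the positivity, after which the perturbation-theoretic conclusion is routine.
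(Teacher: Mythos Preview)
Your proposal is correct and follows essentially the same route as the paper: reduce to a symmetric analytic family $R^s$ via Lagrange Grassmannian coordinates, conjugate by the dilations $\delta_s$ to pass to the rescaled flow $\Psi^s$ (and the maps $B_i^s$), compute the velocity form and obtain the same sum of three nonnegative terms, and conclude positivity by the same contradiction with $\int_0^1 X_tX_t^*\,dt > 0$. The only cosmetic difference is that you make the perturbation-theoretic step explicit via Rellich's theorem, whereas the paper simply asserts that nondegeneracy of $\partial_s R^s$ suffices.
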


The following proposition is proved in \cite{determinant}.
\begin{prop}
	There exists $c_1,c_2>0$ such that:
	\begin{equation*}
		\|\Phi_1^s\|\le c_1e^{c_2 |s|}\quad  \forall \, s \in \mathbb{C}
	\end{equation*}
	Moreover $\Phi_t^s$ is analytic and the function $s\mapsto \det (Q^s)$ is entire and satisfy the same type of estimate.
\end{prop}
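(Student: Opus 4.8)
The plan is to establish the growth estimate for $\Phi_1^s$ first, handle analyticity along the way, and then transfer both to $\det(Q^s)$. The fundamental solution $\Phi_t^s$ solves $\dot\eta = A_t^s\eta$ with coefficient $A_t^s = Z_t^s(Z_t^s)^*J = \delta^sZ_tZ_t^*\delta^sJ$, which is a polynomial of degree two in $s$ with coefficients continuous (hence integrable) in $t$. Analyticity in $s$ is then immediate from the Peano--Baker (Volterra) series
\begin{equation*}
	\Phi_t^s = I + \sum_{n\ge 1}\int_{0\le t_n\le\cdots\le t_1\le t}A^s_{t_1}\cdots A^s_{t_n}\,dt_n\cdots dt_1,
\end{equation*}
each summand being a polynomial in $s$ and the series converging locally uniformly in $s$; this is the content of \cite{determinant}[Proposition 4]. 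The only genuine issue is the \emph{rate} of growth: a direct Grönwall estimate yields only $\|\Phi_1^s\|\le\exp\big(\int_0^1\|A_t^s\|\,dt\big)$, and since $\|A_t^s\|$ grows like $|s|^2$ this gives exponential type two, which is too weak for the Hadamard factorization used in the outline.

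To recover the linear rate I would exploit the conjugation already introduced in the proof of \Cref{prop: multiplicity root separated}. There the rescaled solution $\Psi_t^s = \delta_s\Phi_t^s\delta_{1/s}$ is shown to satisfy \eqref{eq: rescaled jacobi eq}, i.e. $\dot\Psi_t^s = sZ_tZ_t^*J\,\Psi_t^s$, whose coefficient is now \emph{linear} in $s$. Grönwall therefore gives
\begin{equation*}
	\|\Psi_1^s\|\le\exp\Big(|s|\int_0^1\|Z_tZ_t^*J\|\,dt\Big) = c_1 e^{c_2|s|}.
\end{equation*}
Since $\delta_{1/s}=\delta_s^{-1}$ and the splitting $\Pi_0\oplus\Pi_0^\perp$ is $g_0$-orthogonal, one has $\|\delta_s\|=\max(1,|s|)$ and $\|\delta_{1/s}\|=\max(1,1/|s|)$. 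For $|s|\ge 1$ I would write $\Phi_1^s = \delta_{1/s}\Psi_1^s\delta_s$ and bound $\|\Phi_1^s\|\le |s|\,c_1e^{c_2|s|}$, absorbing the polynomial factor $|s|\le e^{|s|}$ into the exponential. For $|s|\le 1$ the coefficient $A_t^s$ is uniformly bounded, so the direct Grönwall estimate already furnishes a uniform constant bound. Combining the two regimes gives $\|\Phi_1^s\|\le c_1 e^{c_2|s|}$ for all $s\in\mathbb{C}$.

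Finally, for the determinant I would recall that, up to the constant normalization in \eqref{eq: def Q separated BC}, $\det(Q^s)$ is the determinant of the $n\times n$ matrix $M^s = T_1^*J_1A_1^s\tilde\Phi_*\Phi_1^sA_0^sT_0$. The factors $A_0^s,A_1^s$ are affine in $s$, while $T_0,T_1,J_1$ and the fixed symplectomorphism $\tilde\Phi_*$ are constant in $s$; hence $M^s$ is entire in $s$ with $\|M^s\|\le p(|s|)\,e^{c_2|s|}$ for some polynomial $p$. Since the determinant of an $n\times n$ matrix is a polynomial of degree $n$ in its entries, $s\mapsto\det M^s$ is entire and satisfies $|\det M^s|\le C\|M^s\|^n\le C'e^{nc_2'|s|}$, which is the asserted estimate.

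The main obstacle is the second paragraph: the genuinely new input is the reduction of the exponential order from two to one, achieved by conjugating to the linear equation \eqref{eq: rescaled jacobi eq}, where the algebraic cancellations $\delta_s\delta^s = sI$ and $\delta^sJ\delta_{1/s}=J$ collapse the quadratic coefficient to a linear one. Everything else --- analyticity from the Volterra series and the passage to $\det(Q^s)$ via the fact that a determinant is polynomial in the entries --- is routine.
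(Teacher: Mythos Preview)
Your argument is correct. The paper does not actually prove this proposition in the text; it simply records the statement and defers the proof entirely to the earlier work \cite{determinant}. Your proposal therefore goes beyond what the present paper does, supplying a self-contained argument.

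The key step you identify --- that a na\"ive Gr\"onwall bound on $\dot\Phi_t^s = \delta^s Z_t Z_t^*\delta^s J\,\Phi_t^s$ only gives exponential type two, and that conjugating by $\delta_s$ to pass to $\Psi_t^s$ reduces the coefficient to $s\,Z_tZ_t^*J$ and hence to type one --- is exactly the right mechanism. The algebraic identities $\delta_s\delta^s = sI$ and $\delta^s J\delta_{1/s} = J$ that drive this reduction are precisely those already exploited in the proof of \Cref{prop: multiplicity root separated}, so your use of them is consistent with the surrounding text. The remaining steps (Volterra series for analyticity, splitting $|s|\le 1$ versus $|s|\ge 1$, absorbing the polynomial prefactor $|s|$ into the exponential, and bounding the determinant via $|\det M^s|\le C\|M^s\|^n$) are routine and correctly handled.

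One small remark: when you write $(Z_t^s)^*$ for complex $s$, you are implicitly using the transpose (i.e.\ the holomorphic extension of the real adjoint) rather than the conjugate transpose, so that the coefficient remains a polynomial in $s$ rather than a function of $(s,\bar s)$. This is the correct reading --- it is what the paper means by ``complexify all the subspaces involved'' --- but it might be worth saying explicitly.
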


This fact tells us that $\det Q^s$ is an entire function of order $\rho\le 1$. We know its zeros, which are determined by the eigenvalues of $K$, and thus we can apply Hadamard factorization theorem (see \cite{conway}) to present it as an infinite product. It follows that we have the following identity:
\begin{equation}
	\label{eq: hadamard factorization}
	\det(Q^s) = as^k e^{b s}\prod_{\lambda \in Sp(K)} (1+s\lambda)^{m(\lambda)} \quad a, b \in \mathbb{C}, a\ne 0, k \in \mathbb{N}
\end{equation}
where $m(\lambda)$ is the geometric multiplicity of the eigenvalue $\lambda$. To determine the remaining parameters it sufficient to know the value of $\det(Q^s)$ and a certain number of its derivatives at $s =0$ (depending on the value of $k$). Assume for now that $k=0$, a straightforward computation shows that:
\begin{equation}
	\label{eq:normalization_factors}
	\det(Q^s)\vert_{s=0} = a, \quad 	\partial_s\det(Q^s)\vert_{s=0} = a(b+\tr(K)).
\end{equation}
We will compute these quantities in \Cref{lemma: quantities at s=0,lemma: second variation general BC and trace} for general boundary conditions. The proofs can be adapted to the case of separated conditions easily.

	\subsection{General boundary condition}
\label{section: proof general BC}
In this section we prove a determinant formula for general boundary conditions $N \subseteq M \times M$. First, we reduce this case to the case of separate boundary conditions. We have to slightly modify the proof of \Cref{prop: boundary value problem determinant} since, after this reduction,  the Endpoint map will not be a submersion any more.  Then, we compute the normalization factors given in \eqref{eq:normalization_factors}. 

Let us consider $M \times M$ as state space, with the following dynamical system:
\begin{equation}
	\label{eq:doubled_variables}
	f_u(q',q) = \begin{pmatrix}
		0 \\ f_u(q)
	\end{pmatrix}, \quad (q',q) \in M\times M.
\end{equation}
and boundary conditions $\Delta \times N$. With this definition, any extremal between two points $q_0$ and $q_1$ lifts naturally to an extremal between $(q_0,q_0)$ and $(q_0,q_1)$. 
However, the Endpoint map of the new system is no longer surjective. In fact, any trajectory is confined to a submanifold of the form $\{\hat q\} \times M$. Thus, even if we started with a strictly normal extremal, we do not get a strictly normal extremal of the new system. However, there is no real singularity of the Endpoint map here: we have just introduced a certain number of conserved quantities. All the proofs presented above work also in this case. We are going to discuss briefly how to adapt them.

Let us start with Pontryagin maximum principle. It implies that the lift of the extremal curve $\tilde{q} (t) = (q_0,q(t))$ is the curve $\tilde \lambda (t)= (-\lambda_0,\lambda(t))$. This is because the initial and final covector of the lift must annihilate the tangent space of the boundary conditions manifold. In this case $N_0 = \Delta$ and the annihilator of the diagonal subspace is $\{(\lambda,-\lambda): \lambda \in T_{\lambda_0}T^*_{q_0}M\}$. Moreover, by the orthogonality condition in PMP (see \ref{appendix}), we know that $(-\lambda(0),\lambda(1))$ annihilates the tangent space of $N$.

Thus, if we want to work in one fixed tangent space, we have to multiply the first covector by $-1$. This changes the sign of the symplectic form and we are thus brought to work on $T_{\lambda_0} T^*M \times T_{\lambda_0}T^*M$ with symplectic form $(-\sigma)\oplus \sigma$.

With this change of sign, the tangent space to the annihilator of the diagonal gets mapped to the diagonal subspace of $T_{\lambda_0} T^*M \times T_{\lambda_0}T^*M$ and the tangent space to the annihilator of the boundary conditions $N$ is mapped to the tangent space of:
\begin{equation*}
	A(N) = \{(\mu_0,\mu_1) :\langle \mu_0,X_0\rangle =\langle \mu_1,X_1\rangle, \, \forall (X_0,X_1) \in TN\}.
\end{equation*}

Let us make now some notational remarks. We will still denote by $\tilde \Phi_*$ the map  $1 \times \tilde \Phi_*$, which correspond to the new flow we are using to backtrack our trajectory to the starting point. 

As explained in \Cref{section: space of variation}, to define the scalar product on our space of variations, it is necessary to introduce two metrics on the tangent spaces to the endpoints of our curve. We will choose them of the form $\tilde g_0 = g_0\oplus g_0$ and $\tilde g_1 = g_0\oplus g_1$ where $g_0$ and $g_1$ are two metrics on $T_{\lambda_0}T^* M$ and $T_{\lambda_1}T^*M$ respectively.

Now we compute the second variation of the new system \eqref{eq:doubled_variables}. As a general rule, we will denote all the quantities relative to \eqref{eq:doubled_variables} on $M\times M$, putting a $\tilde{}$ on top. We have:
\begin{equation*}
	\tilde Z_0 u_0 =\begin{pmatrix}
		Z_0u_0 \\Z_0u_0
	\end{pmatrix}, \quad \tilde Z_1 = \begin{pmatrix}
		Z_1^0 u_1\\ Z_1^1 u_1
	\end{pmatrix}, \quad \tilde Z_t =\begin{pmatrix}
		0\\Z_t
	\end{pmatrix}.
\end{equation*}
Notice that $\tilde \Phi_* \tilde Z_1$ maps $\mathbb{R}^{\dim(N)}$ to the tangent space to $A(N)$ and we can assume that its image is contained in $\Pi_0^\perp\times\Pi_1^\perp$. We will denote by $\tilde {pr}_1$ the orthogonal projection onto the image of $\tilde Z_1$.

The domain of the second variation is the subspace $\mathcal{V} = \{(u_0,u_t,u_1) : \tilde Z_0u_0 + \int_0^1 \tilde Z_t u_t + \tilde Z_1u_1 \in \Pi_0\times \Pi_0\}$. Clearly, this equation is equivalent to:
\begin{equation*}
	Z_0u_0+Z_1^0u_1 \in \Pi_0 \quad\text{ and } \quad  Z_0u_0+Z^1_1u_1+\int_0^1Z_t u_t dt \in \Pi_0.
\end{equation*}
It follows that the control $u_0$ is completely determined by $u_1$. Moreover we can assume that $Z_0u_0 = -Z_1^0u_1$ since we are free to choose any system of coordinates and any trivialization of the tangent bundle of the manifolds $\Delta$ and $N$.

Let $A_i^s$ be the maps given in \eqref{eq: def maps Ai}.
The following proposition is the counterpart of \Cref{prop: boundary value problem determinant} for general boundary conditions.

\begin{prop}
	\label{prop: jacobi equation general case}
	Let $\Phi_1^s$ be the fundamental solution of the Jacobi system:
	\begin{equation*}
		\dot \eta = Z_t^s(Z_t^s)^* J \eta.
	\end{equation*}
	The operator $1+sK$ restricted to $\mathcal{V}$ has non trivial kernel if and only if there exists a non zero  $(\eta_0,\eta_1) \in T_{(\lambda_0,\lambda_1)}A(N)$ such that
	\begin{equation}
		\label{eq: eigenvalue theorem1}
		A_1^s\circ \tilde \Phi_* \circ  \Phi_1^s \circ  A_0^s \,  \eta_0  =\eta_1
	\end{equation}
	The geometric multiplicity of the kernel equals the number of linearly independent solutions of the above equation. 
	\begin{proof}
		The proof is completely analogous to the one of \Cref{prop: boundary value problem determinant}. However, some slight modifications are in order since the Endpoint map is not surjective in this case. 
		
		\claim{Characterize $\mathcal{V}^\perp$.}		
		The orthogonal complement to $\mathcal{V}$ is given by: \begin{equation*}
			\mathcal{V}^\perp =\{(v_0, \tilde Z_t^*J\tilde \nu,v_1) : \tilde {pr}_1\left(\begin{pmatrix}
				2Z_0v_0 \\0
			\end{pmatrix}+\tilde J_1 \tilde \nu\right) = \tilde \Phi_* Z_1 v_1\}
		\end{equation*}
		The proof is the same as the one of \Cref{orthogonal to V} and yields $v_t = \tilde Z_t^*J \tilde\nu$. Here, however, we can not separate $v_0$ from $v_1$. Take $u \in \mathcal{V}$ and $v\in V^\perp$:
		\begin{align*}
			\langle u, v \rangle = \sigma \left (\int_0^1 Z_t u_t dt,\nu \right)+ \tilde{g}_0(\tilde Z_0 u_0,\tilde Z_0v_0) + \tilde\Phi^*\tilde g_1 (\tilde Z_1 u_1,\tilde Z_1 u_1) \\
			=  \tilde g_0(\tilde Z_0 u_0, \tilde Z_0 v_0-\tilde J_0 \nu)+ \tilde\Phi^*\tilde g_1(\tilde Z_1 u_1, \tilde Z_1 v_1-\tilde J_1 \nu)\\
			= g_0 (Z_1^0 u_1, J_0 \nu-2Z_0v_0+Z_1^0v_1) + \tilde\Phi^*g_1(Z_1^1u_1,Z_1^1 v_1-J_1 \nu)=0.
		\end{align*}
		Hence: 
		\begin{equation}
			\label{eq:orthogonal_complement_nonsep}
			\tilde {pr}_1 \begin{pmatrix}
				2 Z_0 v_0-	J_0 \nu  \\ J_1 \nu
			\end{pmatrix} 
			= \tilde{pr}_1 \left(  \begin{pmatrix}
				2 Z_0 v_0  \\0
			\end{pmatrix} +\tilde 
			J_1 \tilde \nu \right) = \tilde \Phi_* \tilde Z_1 v_1
		\end{equation}
		
		\claim{Derivation of Jacobi equation.}
		Now we can write down Jacobi equation in a fashion similar to the one of \Cref{prop: boundary value problem determinant}. The system reads:
		\begin{equation}
			\label{eq:jacobi_system extended}
			\begin{cases}
				(1-s)\tilde Z_0 u_0 = \tilde Z_0v_0\\
				\dot {\tilde\eta}(t) = \tilde Z_t^s(\tilde Z_t^s) J \tilde \eta(t) \\
				(1-s) \tilde \Phi_* \tilde Z_1 u_1 = \tilde\Phi_* \tilde Z_1 v_1 + s \, \tilde {pr}_1 \tilde J_1 \tilde \Phi_* (\tilde Z_0 u_0+\int_0^1 \tilde Z_t u_t dt +\tilde Z_1 u_1)
			\end{cases}
		\end{equation}
		Where $\tilde\eta(t) = \int_0^t \tilde Z_t^s u_t dt + \tilde Z_0 u_0 +\tilde \nu $. Arguing as in \Cref{prop: boundary value problem determinant}, we can rewrite the last term of the third equation. We have:
		\begin{align*}
			{pr}_1 \tilde J_1 \tilde \Phi_* (\tilde Z_0^s u_0+\int_0^1 \tilde Z^s_t u_t dt +\tilde Z^s_1 u_1) &= {pr}_1 \tilde J_1 \tilde \Phi_* (\eta(1)-\tilde\nu +\tilde Z_1^su_1) \\
			\tilde \Phi_* Z_1^s u_1 = \tilde \Phi_*Z_1^1 u_1 + s\tilde \Phi_* \tilde {pr}_\Pi \tilde Z_1u_1 &= \tilde \Phi_*Z_1^1 u_1 + s(\tilde \Phi_* \tilde {pr}_\Pi\tilde \Phi_* ^{-1}) \tilde \Phi_* \tilde Z_1u_1   .
		\end{align*}
		By the equation defining $\mathcal V$ we have that:
		\begin{equation*}
			\tilde {pr}_{\Pi^\perp }\tilde \Phi_*\eta(1) = - \begin{pmatrix}
				Z_1^0u_1 \\ \tilde \Phi_*Z^1_1u_1
			\end{pmatrix} =- \tilde \Phi_*\tilde Z_1 u_1.
		\end{equation*}
		Lastly note the the first equation gives $(1-s)\tilde Z_0u_0 = (1-s) \tilde {pr}_{\Pi^\perp} \eta(0) = \tilde Z_0 v_0$. Let us now plug all this equations into \eqref{eq:jacobi_system extended} recalling that $\tilde{pr}_{\Pi} + \tilde{pr}_{\Pi^\perp} =1$. The third line now reads:
		\begin{align*}
			pr_1\bigg(\begin{pmatrix}
				2(1-s) \tilde {pr}_{\Pi_0^\perp} \eta(0)  \\ 0
			\end{pmatrix}+\tilde J_1 \tilde \Phi_* (\tilde{pr}_{\Pi}\eta(1)-s \tilde \Phi_* \tilde{pr}_\Pi \tilde \Phi_*^{-1}\tilde {pr}_{\Pi^\perp }\tilde \Phi_*\eta(1))\\ +(1-s)  \tilde{ {pr}}_{\Pi^\perp} \tilde \Phi_* \eta(1)\bigg)=0.
		\end{align*}
		
		Writing the equation component-wise and using the fact that, since $\tilde \Phi_*$ preserve the fibres, it holds:
		$$\tilde \Phi_*pr_{\Pi_0} = \tilde \Phi_* pr_{\Pi_0}\tilde \Phi_*^{-1} \tilde pr_{\Pi_{1}^\perp} \Phi_*   + pr_{\Pi_1} \tilde \Phi_*, $$
		we obtain a relation very similar to the one in \eqref{eq: bvp both points}. Namely:
		\begin{equation}
			\label{eq:proof_general_bc_finaleq}
			\tilde{pr}_1\begin{pmatrix}
				-J_0\left(3(s-1)J_0^{-1} pr_{\Pi_0^\perp} \eta(0)+\tilde \Phi_* pr_{\Pi_0}\eta(0)\right) \\
				J_1 \left(pr_{\Pi_1} \tilde \Phi_*\eta(1) + (1-s)( \tilde \Phi_*  pr_{\Pi_1} \tilde \Phi_*^{-1}+J_1^{-1}) {pr}_{\Pi_1^\perp} \tilde \Phi_* \eta(1)\right)
			\end{pmatrix}=0
		\end{equation}
		At this point the argument is again the following. If $X$ belongs to $A(N) + \Pi$ and $\tilde{pr_1} (\tilde J_1 X) =0$, it follows that:
		\begin{equation*}
			\tilde{g}_1(\tilde pr_1 \tilde J_1 X, \tilde Z_1 u_1) = \sigma(X, \tilde Z_1u_1)=0, \, \forall\,  u_1 \in \mathbb{R}^{\dim(N)} .
		\end{equation*}
		Hence $X$ must lie in  $\mathrm{Im}(\tilde Z_1)^{\angle} \cap (A(N)+\Pi)$, which is $A(N)$. Consider the maps $A^s_i$ defined in \eqref{eq: def maps Ai}, they preserve $A(N) +\Pi$. Thus we can rewrite \eqref{eq:proof_general_bc_finaleq} as:
		\begin{equation*}
			\begin{pmatrix}
				(A_0^s) ^{-1} \eta(0) \\A_1^s\tilde \Phi_* \eta(1)
			\end{pmatrix}
			\in A(N) \iff \Gamma(A_1^s \tilde \Phi_* \Phi_1^s A_0^s)\cap A(N).
		\end{equation*}
		Notice the presence of the inverse of $A_0^s$ due to the sign of the symplectic form. 
		
		\claim{Uniqueness}
		Arguing again as in \Cref{prop: boundary value problem determinant}, to the trivial variation $(0,0,0)$ correspond constant solutions of Jacobi equation starting from the fibre. However, since the Endpoint map of the original system is regular, there are no such solutions. Hence the correspondence between $\ker(1+s K)$ and $\Gamma(A_1^s \tilde \Phi_* A_0^s)\cap A(N)$ is one-to-one.
	\end{proof}
\end{prop}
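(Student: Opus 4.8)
The plan is to follow the strategy of \Cref{prop: boundary value problem determinant}, reinterpreting the kernel equation $u + sKu \in \mathcal{V}^\perp$ as a boundary value problem for the Jacobi equation $\dot\eta = Z_t^s(Z_t^s)^* J\eta$, while taking care of the fact that the doubled Endpoint map is no longer a submersion. First I would characterize $\mathcal{V}^\perp$ by the same testing argument as in \Cref{orthogonal to V}: testing against variations that fix the endpoints forces $v_t = \tilde Z_t^* J\tilde\nu$ for some $\tilde\nu \in \Pi_0\times\Pi_0$, and testing against the remaining variations produces the coupling relation \eqref{eq:orthogonal_complement_nonsep}. The essential new feature is that, because the first block of the dynamics is trivial and $N_0 = \Delta$, the control $u_0$ is slaved to $u_1$ and one cannot decouple the boundary components $v_0$ and $v_1$; the orthogonality condition therefore collapses to the single projected identity $\tilde{pr}_1(2Z_0v_0 - J_0\nu,\, J_1\nu) = \tilde\Phi_*\tilde Z_1 v_1$.

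With $\mathcal{V}^\perp$ in hand, I would write the kernel system \eqref{eq:jacobi_system extended}. Introducing $\tilde\eta(t) = \int_0^t \tilde Z_t^s u_t\,dt + \tilde Z_0 u_0 + \tilde\nu$ and using the dilation substitution $Z_t \mapsto Z_t^s = \delta^s Z_t$, which absorbs the factors of $s$ exactly as in the separated case, turns the middle equation into the Jacobi flow. The boundary data at $t=0$ and $t=1$ are then rewritten, using $\tilde\Phi_*\tilde Z_1 u_1 = -\tilde{pr}_{\Pi^\perp}\tilde\Phi_*\eta(1)$ together with the fibre-preserving identity $\tilde\Phi_* pr_{\Pi_0} = \tilde\Phi_* pr_{\Pi_0}\tilde\Phi_*^{-1}\tilde{pr}_{\Pi_1^\perp}\tilde\Phi_* + pr_{\Pi_1}\tilde\Phi_*$, until they take the projected form \eqref{eq:proof_general_bc_finaleq}.

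The heart of the argument is the passage from the projected condition $\tilde{pr}_1(\tilde J_1 X) = 0$ to the geometric statement $X \in T_{(\lambda_0,\lambda_1)}A(N)$. I would argue that $\tilde{pr}_1(\tilde J_1 X)=0$ is equivalent to $\sigma(X,\tilde Z_1 u_1)=0$ for every $u_1$, so that $X$ lies in the skew-orthogonal complement $\mathrm{Im}(\tilde Z_1)^\angle$; intersecting with $A(N)+\Pi$, the space in which $X$ is already forced to live, returns exactly $A(N)$. Since the symplectomorphisms $A_i^s$ preserve $A(N)+\Pi$, this recasts the boundary conditions as $(A_0^s)^{-1}\eta(0)$ on the first factor and $A_1^s\tilde\Phi_*\eta(1)$ on the second factor lying in $A(N)$, i.e.\ as the single intersection $\Gamma(A_1^s\tilde\Phi_*\Phi_1^s A_0^s)\cap T_{(\lambda_0,\lambda_1)}A(N)\neq(0)$; the inverse on the first factor reflects the sign of the symplectic form $(-\sigma)\oplus\sigma$.

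Finally, for the multiplicity statement I would run the injectivity argument of \Cref{prop: boundary value problem determinant}: distinct controls giving the same $\tilde\eta$ must agree on $u_0,u_1$ since $\tilde Z_0,\tilde Z_1$ are injective, and on the Volterra integral, hence coincide; conversely the trivial variation corresponds to constant Jacobi solutions emanating from the fibre, which are excluded because the original Endpoint map is regular, i.e.\ $\int_0^1 X_t X_t^*\,dt > 0$. I expect the main obstacle to be precisely the loss of surjectivity of the doubled Endpoint map: it is what forces the coupled description of $\mathcal{V}^\perp$ and necessitates the identity $\mathrm{Im}(\tilde Z_1)^\angle \cap (A(N)+\Pi)=A(N)$, which replaces the two cleanly independent conditions that were available in the separated case.
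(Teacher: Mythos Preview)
Your proposal is correct and follows essentially the same route as the paper's proof: the three-step structure (characterize $\mathcal{V}^\perp$ with the coupled relation \eqref{eq:orthogonal_complement_nonsep}, derive the Jacobi system and reduce the boundary equations via the fibre-preserving identity to the projected condition \eqref{eq:proof_general_bc_finaleq}, then pass to $A(N)$ via $\mathrm{Im}(\tilde Z_1)^\angle \cap (A(N)+\Pi)=A(N)$) is exactly the paper's argument, as is the uniqueness step. You have also correctly isolated the two genuinely new ingredients relative to \Cref{prop: boundary value problem determinant}, namely the coupling of $v_0$ and $v_1$ and the appearance of $(A_0^s)^{-1}$ from the sign of the symplectic form on the first factor.
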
  

Now we define an analogous map to the one in \cref{eq: def Q separated BC}.
Let $\pi_N$ be the orthogonal projection on the space $T_{(\lambda_0,\lambda_1)}A(N)^\perp$ and consider the map:
\begin{equation}
	\label{eq: def map Q}
	Q^s : \Gamma(A_1^s\tilde \Phi_*\Phi_1^s A_0^s) \to T_{(\lambda_0,\lambda_1)}A(N)^\perp, \quad Q^s(\eta) =  \pi_N ( \eta).
\end{equation} 

 Let $T = (T_0,T_1)$ be any linear invertible map from $\mathbb{R}^{2n}$ to the tangent space $T_{(\lambda_0,\lambda_1)}A(N)$. We denote by $J$ the map $(-J_0) \oplus J_1$ representing the symplectic form $(-\sigma_{\lambda_0})\oplus \sigma_{\lambda_1}$. As in the previous section we define the following function:
 \begin{equation*}
 	\det(Q^s)= \frac{\det(T_1^*J_1A_1^s\tilde \Phi_* \Phi_1^sA_0^s -T_0^*J_0)}{\det(T_0^*J_0J_0^*T_0+T_1^*J_1J_1^*T_1)^{1/2}} 
 \end{equation*}

\begin{rmrk}
 One could also define \eqref{eq: def map Q} as a bilinear form, using just the symplectic pairing. In fact, for $(\eta,(\xi_0,\xi_1))$ in $T_{\lambda_0}T^*M\times T_{(\lambda_0,\lambda_1)}A(N)$, define:
 \begin{equation*}
 	\tilde Q^s (\eta,(\xi_0,\xi_1)) = \sigma(A_1^s\tilde \Phi_*\Phi_1^sA_0^s \eta, \xi_1) - \sigma (\eta,\xi_0).
 \end{equation*}
This form is degenerate exactly when $\Gamma(A_1^s \tilde \Phi_* A_0^s)\cap A(N) \ne (0).$
\end{rmrk}

\begin{prop}
	The multiplicity of any roots $s_0 \ne 0$ of the equation $\det(Q^s)$ is equal to the geometric multiplicity of the boundary value problem.
	\begin{proof}
		The same proof of \Cref{prop: multiplicity root separated} works verbatim. Indeed, we are working with the same curve, $\Gamma(A_1^s \tilde \Phi_* A_0^s)$.
	\end{proof}
\end{prop}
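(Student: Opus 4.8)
The plan is to transport the argument of \Cref{prop: multiplicity root separated} to the present setting, observing that the only datum that changes is the fixed Lagrangian against which we test the intersection. As in the first step there, I would restrict to real $s$ (all roots being real) and choose Darboux coordinates on the Lagrange Grassmannian in which the fixed subspace $L := T_{(\lambda_0,\lambda_1)}A(N)$ reads $\{q = 0\}$ and some transversal complement reads $\{p = 0\}$. Near a crossing $s_0 \neq 0$ the curve $\Lambda_s := \Gamma(A_1^s \tilde \Phi_* \Phi_1^s A_0^s)$ is then the graph $\{(p, R^s p)\}$ of a symmetric matrix $R^s$, analytic in $s$, and one has $\det(Q^s) = 0 \iff \det(R^s) = 0$ with equal vanishing order, while $\dim(\Lambda_s \cap L) = \dim \ker R^s$ is exactly the geometric multiplicity by \Cref{prop: jacobi equation general case}. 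Everything therefore reduces to showing that $\det R^s$ vanishes at $s_0$ to order precisely $\dim\ker R^{s_0}$.

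The crucial point is that the curve $\Lambda_s$ is identical to the one appearing in the separated case: it is built from the very same maps $A_0^s$, $A_1^s$, $\tilde\Phi_*$ and the same fundamental solution $\Phi_1^s$. Consequently the velocity computation carried out in the second and third steps of \Cref{prop: multiplicity root separated} — the rescaling $\Psi^s = \delta_s \Phi_1^s \delta_{1/s}$, the resulting maps $B_0^s$, $B_1^s$, and the evaluation of the crossing form — goes through unchanged, since it never refers to the fixed Lagrangian but only to the symplectomorphism whose graph is $\Lambda_s$. I would simply reproduce that computation to obtain, for $\lambda_s \in \Lambda_s$,
\begin{equation*}
	S_s(\lambda_s) = \frac{1}{s^2}\left( \|pr_{\Pi_0^\perp}\eta\|^2 + \|pr_{\Pi_1^\perp}\Psi^s B_0^s\eta\|^2 \right) + \int_0^1 \|Z_t^* J \Psi_t^s B_0^s\eta\|^2\, dt \ge 0,
\end{equation*}
which is the quadratic form representing $\partial_s R^s$. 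As before, $S_s$ vanishes only on vectors $\eta$ producing a constant Jacobi solution that starts and ends in the fibre, and such solutions are ruled out by the non-degeneracy $\int_0^1 X_t X_t^* \,dt > 0$; hence $\partial_s R^{s_0}$ is definite and in particular non-degenerate on $\ker R^{s_0}$.

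With the crossing form non-degenerate, the conclusion follows from analytic perturbation theory: $R^s$ being a symmetric analytic family, its eigenvalues are analytic in $s$, and the $\dim\ker R^{s_0}$ eigenvalues that vanish at $s_0$ have nonzero derivative there, equal to the value of $S_{s_0}$ on the corresponding eigenvector. Each therefore contributes a simple factor, so $\det R^s$ — and with it $\det Q^s$ — vanishes to order exactly $\dim\ker R^{s_0}$, which is the geometric multiplicity established in \Cref{prop: jacobi equation general case}. The step I expect to require the most care is confirming that the positivity of $S_s$ truly survives the passage to general boundary conditions: because the reduction of \Cref{section: proof general BC} doubles the variables and thereby introduces conserved quantities, the doubled velocity form is only positive \emph{semi}-definite, with null directions coming from the frozen first factor. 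One must check that these directions do not lie in $T_{(\lambda_0,\lambda_1)}A(N)$, so that the restriction of the crossing form to the intersection stays non-degenerate; this is precisely where the regularity of the original (undoubled) Endpoint map, already exploited in the uniqueness step of \Cref{prop: jacobi equation general case}, re-enters.
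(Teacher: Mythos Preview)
Your proposal is correct and follows the paper's approach exactly: the paper's own proof is the one-line observation that the curve $\Lambda_s = \Gamma(A_1^s\tilde\Phi_*\Phi_1^sA_0^s)$ is literally the same curve as in \Cref{prop: multiplicity root separated}, so the velocity computation and the positivity argument transfer verbatim.

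Your final paragraph, however, raises a concern that is not actually present. The doubling of variables in \Cref{section: proof general BC} is only an intermediate device used inside the proof of \Cref{prop: jacobi equation general case} to derive the form of the boundary condition; the \emph{conclusion} of that proposition is stated back in the original, undoubled symplectic space $T_{\lambda_0}T^*M\times T_{\lambda_1}T^*M$ with form $(-\sigma)\oplus\sigma$. The fundamental solution $\Phi_1^s$, the maps $A_0^s$, $A_1^s$, and hence the curve $\Lambda_s$ and its velocity $S_s$, are all the $2n$-dimensional objects from the separated case. There is no ``doubled velocity form'' to worry about and no semi-definiteness: the non-degeneracy of $S_s$ follows from $\int_0^1 X_tX_t^*\,dt>0$ exactly as before, with no additional check needed against $T_{(\lambda_0,\lambda_1)}A(N)$. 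The only thing that changed is the fixed Lagrangian $L$ against which intersections are counted, and as you correctly note, $L$ never enters the velocity computation.
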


In the remaining part of this section we carry out the computation of the normalizing factors of the function $\det(Q^s)$. As already mentioned at the end of the previous section a classical factorization theorem by Hadamard (see \cite{conway}) tells us that:
\begin{equation*}
	\det(Q^s) = as^k e^{b s}\prod_{\lambda \in Sp(K)} (1+s\lambda)^{m(\lambda)} \quad a, b \in \mathbb{C}, a\ne 0, k \in \mathbb{N}
\end{equation*}
where $m(\lambda)$ is the geometric multiplicity of the eigenvalue. We are now going to compute the values of $a,b \in \mathbb{C}$ and $k$.

\begin{thm}
	\label{thm: determinant general BC}
	For almost any choice of metrics $g_0,g_1$ on $T_{\lambda_i}T^*M$, $\det(Q^s\vert_{s=0}) \ne 0$. Whenever this condition holds, the determinant of the second variation is given by:
	\begin{equation}
		\det(1+ sK) =\det((Q^s\vert_{s=0})^{-1})e^{s (\tr(K)-\tr(\partial_s Q^s \,(Q^s)^{-1}\vert_{s=0}))}  \det(Q^s)
	\end{equation}
	\begin{proof}
			We prove the first assertion: for almost any choice of scalar product, $k =0$ and thus $a=\det(Q_1^s|_{s=0})\ne 0$. 
			This is equivalent to a transversality condition between the graph of the symplectomorphism $A_1^s \tilde\Phi_* \Phi^s A_0^s$ and the annihilator of the boundary conditions $N$.
			
			We can argue as follows: consider the following family of maps acting on the Lagrange Grassmannian of $T_{\lambda_0}T^*M \times T_{\lambda_1}T^*M$ depending on the choice of scalar products $G_0$ and $G_1$:
		\begin{equation*}
			F_G = (A_0^s)^{-1} \times  A_1^s\vert_{s=0}, \quad G = (G_0,G_1), G_i>0.
		\end{equation*}
		
		It is straightforward to see that they define a family of algebraic maps of the Grassmannian to itself. For any chosen subspace $L_0$, $F^{-1}_G(L_0)$ is arbitrary close to $\Pi_0 \times \Pi_1$, for $G_i$ large enough. Notice that $\Gamma(A_1^s \tilde\Phi_*\Phi^s A_0^s) \cap L_0 \ne(0)$ if and only if $\Gamma (\tilde \Phi_* \Phi^s)\cap F^{-1}_G(L_0) \ne (0) $. Using the formula in \Cref{lemma: quantities at s=0} one has that $\Gamma(\tilde \Phi_* \Phi^s)$ is transversal to $\Pi_0\times \Pi_1$  and thus to $ F^{-1}_G(L_0)$ for any fixed $L_0$ and $G_i$ sufficiently large. Now, since everything is algebraic in $G$ and there is a Zariski open set in which the transversality condition holds, the possible choices of $G_i$ for which $k>0$ are in codimension 1. 
		
		Let us assume that $k= 0$ and compute $b$. Differentiating the expression for $\det(Q^s)$ in \cref{eq: hadamard factorization} at $s=0$ we find that:
		\begin{equation*}
			\partial_s \det(Q^s)\vert_{s=0} = a(b+\tr(K))
		\end{equation*}
		
		An integral formula for the trace of $K$ is given in \Cref{lemma: second variation general BC and trace}. The derivative of $\det(Q^s)$ can be computed using Jacobi formula:
		\begin{equation*}
			\partial_s \det(Q^s)\vert_{s=0} = a\, \tr(\partial_s Q^s \, (Q^s)^{-1})\vert_{s=0}.
		\end{equation*}
		An explicit expression of the derivatives of the map $Q^s$ can be computed using \Cref{lemma: quantities at s=0}.	
		It follows that $b = \tr(\partial_s Q^s \,(Q^s)^{-1})- \tr (K)$ and we obtain precisely the formula in the statement. 
	\end{proof}
\end{thm}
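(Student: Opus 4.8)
The plan is to build directly on the Hadamard factorization
$\det(Q^s) = a s^k e^{bs}\prod_{\lambda\in Sp(K)}(1+s\lambda)^{m(\lambda)}$
recalled just above the statement (cf. \Cref{eq: hadamard factorization}), where $Q^s$ is the map defined in \Cref{eq: def map Q}. Everything reduces to two tasks. First, I would show that the vanishing order $k$ at $s=0$ is zero for a generic choice of the metrics $g_0,g_1$; this is precisely the assertion $\det(Q^s\vert_{s=0}) = a \ne 0$. Second, assuming $k=0$, I would pin down the constants $a$ and $b$ by evaluating $\det(Q^s)$ and its first derivative at the origin. Once both are in hand, dividing the factorization by $a e^{bs}$ isolates $\prod_\lambda(1+s\lambda)^{m(\lambda)} = \det(1+sK)$, and rearranging produces the claimed identity $\det(1+sK) = a^{-1}e^{-bs}\det(Q^s)$.

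For the genericity statement, note that $k$ equals the dimension of the intersection $\Gamma(A_1^0\tilde\Phi_*\Phi_1^0 A_0^0)\cap L_0$ at $s=0$, where $L_0 := T_{(\lambda_0,\lambda_1)}A(N)$, so the goal is to show this intersection is trivial off a measure-zero set of metrics. The key idea is to transfer both the $s=0$ evaluation and the metric dependence onto the boundary subspace. Writing $F_G = (A_0^s)^{-1}\times A_1^s\vert_{s=0}$ for $G=(G_0,G_1)$, the equivalence $\Gamma(A_1^0\tilde\Phi_*\Phi_1^0 A_0^0)\cap L_0 \ne (0) \iff \Gamma(\tilde\Phi_*\Phi_1^0)\cap F_G^{-1}(L_0)\ne (0)$ holds. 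I would then invoke the explicit data at $s=0$ from \Cref{lemma: quantities at s=0} to verify that $\Gamma(\tilde\Phi_*\Phi_1^0)$ is transverse to $\Pi_0\times\Pi_1$, and observe that $F_G^{-1}(L_0)\to \Pi_0\times\Pi_1$ as the $G_i$ grow. Hence transversality holds for large metrics; since the whole condition is algebraic in the entries of $G$, the exceptional locus where it fails is a proper subvariety, and therefore has measure zero.

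To compute the constants, set $k=0$. Evaluating the factorization at $s=0$ gives $a=\det(Q^s\vert_{s=0})$, whence $a^{-1}=\det((Q^s\vert_{s=0})^{-1})$. Taking the logarithmic derivative of the factorization at the origin and using $\sum_\lambda m(\lambda)\lambda = \tr(K)$ (the principal value limit guaranteed by the hypotheses of \Cref{thm: main theorem}) yields $\partial_s\det(Q^s)\vert_{s=0} = a(b+\tr(K))$, which is exactly \Cref{eq:normalization_factors}. On the other hand, Jacobi's formula gives $\partial_s\det(Q^s)\vert_{s=0} = a\,\tr(\partial_s Q^s \,(Q^s)^{-1}\vert_{s=0})$. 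Equating the two expressions solves $b = \tr(\partial_s Q^s\,(Q^s)^{-1}\vert_{s=0}) - \tr(K)$, and substituting $a$ and $b$ back into $\det(1+sK)=a^{-1}e^{-bs}\det(Q^s)$ produces the statement; the concrete values of $\tr(K)$ and of $\partial_s Q^s$ are then supplied by \Cref{lemma: second variation general BC and trace,lemma: quantities at s=0}.

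I expect the genericity argument to be the main obstacle. The delicate points are justifying that the failure locus is genuinely algebraic in $G$, so that \emph{measure zero} follows from \emph{proper subvariety}, and verifying the transversality of $\Gamma(\tilde\Phi_*\Phi_1^0)$ to $\Pi_0\times\Pi_1$ from the explicit $s=0$ expressions. By contrast, the constant computation is comparatively routine, provided one trusts that $\sum_\lambda m(\lambda)\lambda$ genuinely reproduces $\tr(K)$ as a principal value limit and that term-by-term differentiation of the Hadamard product is legitimate; both rest on the exponential growth estimate and the eigenvalue asymptotic recalled earlier.
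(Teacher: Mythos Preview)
Your proposal is correct and follows essentially the same route as the paper: you use the Hadamard factorization, reduce the genericity of $k=0$ to transversality of $\Gamma(\tilde\Phi_*\Phi_1^0)$ with $F_G^{-1}(L_0)$ via the same limiting argument toward $\Pi_0\times\Pi_1$ and the algebraicity in $G$, and then extract $a$ and $b$ by evaluating and differentiating at $s=0$ with Jacobi's formula. One small imprecision: you write that $k$ \emph{equals} the dimension of the intersection at $s=0$, but the multiplicity argument in \Cref{prop: multiplicity root separated} is only proved for $s_0\neq 0$; this is harmless here since you only need the equivalence $k=0 \iff \det(Q^0)\ne 0 \iff$ the intersection is trivial, which is immediate.
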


Before giving the explicit formula for $\tr(K)$ and the derivatives of the fundamental solution to Jacobi equation at $s=0$ we need to make some notational remark and write down a formula for the second variation in the same spirit of \Cref{eq: scnd variation quadratic form} and \cref{eq: second variation quadratic form}. We are working on the state space  $M \times M$ with twice the number of variables of the original system and trivial dynamic on the first factor and separated boundary conditions. The left boundary condition manifold is the diagonal of $M \times M$ and the right one is our starting $N$. We apply the formula in \cref{eq: second variation quadratic form} to this particular system, we denote by $\tilde{Z}_t$ and $\tilde Z_i$ the matrices for the auxiliary problem, in general everything pertaining to it will be marked by a tilde. Identifying $T_{(\lambda_0,\lambda_0)} T^*(M\times M)$ with $T_{\lambda_0}T^*M \times T_{\lambda_0}T^*M$ we have that:
\begin{equation*}
	\tilde Z_0 u_0 =\begin{pmatrix}
		Z_0u_0 \\Z_0u_0
	\end{pmatrix}, \quad \tilde Z_1 = \begin{pmatrix}
		Z_1^0 u_1\\ Z_1^1 u_1
	\end{pmatrix}, \quad \tilde Z_t =\begin{pmatrix}
		0\\Z_t
	\end{pmatrix}.
\end{equation*} 
We still work on the subspace $\mathcal{V} = \{(u_0,u_t,u_1) : \tilde Z_0u_0 + \int_0^1 \tilde Z_t u_t + \tilde Z_1u_1 \in \Pi\}$. However it is clear that the this equation implies that:
\begin{equation*}
	Z_0u_0+Z_1^0u_1 \in \Pi_0 \quad\text{ and } \quad  Z_0u_0+Z^1_1u_1+\int_0^1Z_t u_t dt \in \Pi_0
\end{equation*}
It follows that control $u_0$ is completely determined by $u_1$. Moreover we can assume that $Z_0u_0 = -Z_1^0u_1$ since we are free to choose any system of coordinates and any trivialization of the tangent bundle of the manifolds $\Delta$ and $N$.  Technically we are working with different scalar products on each of the copies of $T_ {\lambda_0} T^*M$. However it is easy to see that on the space $\mathcal{V}$ only the sum of this metrics plays a role. We will denote it $g_0$.
Now we are ready to state the following:

\begin{lemma}
	\label{lemma: second variation general BC and trace}
	The second variation of the extended system, as a quadratic form, can be written as $	\langle (I+K ) u,u \rangle $ where $K$ is the symmetric (on $\mathcal{V}$) compact operator given by:
	\begin{equation*}
		\begin{split}
			-\langle K u, u \rangle  = \int_0^1\int_0^t \sigma(Z_\tau u_\tau,Z_t u_t) d\tau dt -\sigma\left (Z_1^0u_1,\int_0^1 Z_t u_t dt \right )\\+ \sigma \left (\int_0^1Z_tu_t dt -Z_1^0u_1,Z_1^1u_1 \right )+ g_0(Z_1^0u_1,Z_1^0u_1)\\ +(\tilde \Phi^*g_1)(Z_1^1u_1,Z_1^1u_1).
		\end{split}
	\end{equation*}
	Moreover, define the following matrices:
	\begin{equation*}
		\Gamma = \int_0^1 X_t X_t^* dt, \quad \Omega = \int_0^1\int_0^t X_tZ_t^*JZ_\tau X^*_\tau d\tau dt.
	\end{equation*}
	Denote by $pr_1$ the projection onto $T_{(\lambda_0,\lambda_1)} A(N)$ and by $\pi^i_*$ the differential of the natural projections $\pi^i: T^*M \to M$ relative to the $i-$th component,  $i=1,2$. The trace of $K$ has the following expression:
	\begin{equation*}
		\begin{split}
		\tr(K) =& -\dim(N) +\tr[\pi_*^1\tilde \Phi_*^{-1}\,pr_1 \tilde J_1\tilde 	\Phi_* (\tilde Z_0) ]\\ &+\tr\left [\Gamma^{-1} \left( \Omega  + (\pi^2_*-\pi_*^1)\tilde \Phi_*^{-1}\,pr_1 \tilde J_1\tilde 	\Phi_* (\int_0^1 \tilde Z_tZ_t^*J\vert_{\Pi}  dt)\right)\right]
		\end{split}
		\end{equation*}	
	\begin{proof}
			The first part is a straightforward computation combining the expression obtained in \Cref{lemma: compact part second variation} for the second variation with the observation concerning the structure of the maps $Z_0$ and $Z_1$ made before the statement and the choice of the Riemannian metrics.

		Now, notice that the codimension of the space giving fixed endpoints variations $\mathcal{V}$ for the extended system in $\mathcal{H}$ is $2 \dim(M)$. Moreover, it is defined as the kernel of the linear functional:
		\begin{equation*}
			\rho: (u_0,u_t,u_1) \mapsto \tilde \pi_*( \tilde Z_0u_0+\int_0^1\tilde Z_tu_t dt +\tilde Z_1u_1) \in T_{\pi(\lambda_0)}M \times  T_{\pi(\lambda_0)}M.
		\end{equation*}  
		It is straightforward to check that the following subspace is $2\dim(M)-$dimensional and  transversal to $\ker \rho$:
		\begin{equation*}
			\mathcal{V}' = \{(u_0, Z_t^*J \nu, 0): \nu \in \Pi, u_0 \in \mathbb{R}^{\dim M} \}
		\end{equation*}
				
		The trace of $K$ on the whole space splits as a sum of two pieces, the trace of $K\vert_\mathcal{V}$ and the trace of $K\vert_{\mathcal{V}'}$. We can then further simplify and compute separately the trace on $\mathcal{V}' \cap \{u_0=0\}$ and its complement $\mathcal{V}' \cap \{\nu=0\}$.
		We are going to compute the trace of $K$ on the whole space and then the trace of  $K\vert_{\mathcal{V}'}$, determining in this way the value of  $K\vert_{\mathcal{V}}$.
		
		Consider $\mathcal{H} = \mathcal{H}_1\oplus \mathcal{H}_2$. Where \begin{equation*}
			\mathcal{H}_1 = \{u : u = (0,u_t,0)\}, \quad \mathcal{H}_2 = \{u : u = (u_0,0,u_1)\}.
		\end{equation*}
		It is straightforward to check that $\mathcal{H}_1^\perp =\mathcal{H}_2$ for our class of metrics and that $\mathcal{H}_1 \equiv L^{2}([0,1],\mathbb{R}^k)$. Using \cref{eq: second variation quadratic form}, the restrictions of the quadratic form $\hat K(u) =\langle u , K u \rangle$ to each one of the former subspaces read:
		\begin{equation*}
			\hat{K}|_{\mathcal{H}_1} (u) = \int_0^1\int_0^t \sigma (Z_t u_t, Z_\tau u_\tau) dt d\tau, \quad 	\hat{K}|_{\mathcal{H}_2} (u) = \sigma(\tilde Z_0u_0, \tilde Z_1u_1)- \Vert u\Vert_\mathcal{H}^2.
		\end{equation*}
		The trace of the first quadratic form is zero (see \cite{determinant}[Theorem 2]) whereas the trace of the second part is just $-\dim(N)$. Thus we have that:
		\begin{equation*}
			\tr(K\vert_\mathcal{V}) = -\dim(N) - \tr(K\vert_{\mathcal{V}'})
		\end{equation*}
		To compute the last piece we apply $K$ to a control $u \in \mathcal{V}'\cap \{u_0=0\}$ using the explicit expression of the operator given in \Cref{lemma: compact part second variation}. Recall that $pr_1$ is projection onto the image of $\tilde \Phi_* \tilde Z_1$. It follows that:
		\begin{equation*}
			\begin{split}
				K (u) &= \left(0,-Z_t^*J\int_0^t Z_\tau u_\tau d\tau,- L \,pr_1 \tilde J_1\tilde \Phi_*\left(\int_0^1 \tilde Z_tu_t dt \right) \right) \\&= \left (0,-Z_t^*J\int_0^t Z_\tau u_\tau d\tau,-\Lambda(u) \right ).
			\end{split}
		\end{equation*}
	
		Now we write $K(u)$ in coordinates given by the splitting $\mathcal{V} \oplus \mathcal{V}'$. To do so we have first to consider $\rho \circ K(u)$. It is given by:
		\begin{equation}
			\label{eq: rho of K(u)}
			\begin{split}
			-\rho \circ K(u) &= \tilde \pi_*\left( \int_0^1 \tilde Z_tZ_t^*J\int_0^t Z_\tau Z_\tau^*J\nu  +\tilde \Phi_*^{-1}\,pr_1 \tilde J_1\tilde \Phi_* (\int_0^1 \tilde Z_tZ_t^*J \nu dt) \right)		\\
			&= \begin{pmatrix} 0 \\ \Omega \nu\end{pmatrix} +\tilde \pi_*\tilde \Phi_*^{-1}\,pr_1 \tilde J_1\tilde \Phi_* (\int_0^1 \tilde Z_tZ_t^*J \nu dt).
		\end{split}
		\end{equation}
		Set $\Gamma = \int_0^1 X_t^*X_t dt$, it easy to check that, if $u \in \mathcal{V}'$, then $$\rho(u) = \begin{pmatrix}
			X_0 u_0 \\  \Gamma \nu + X_0 u_0
		\end{pmatrix} $$ 
		for an invertible matrix $X_0$ which, without loss of generality, can be taken to be identity.
		It follows that the projection on the first component of $\rho\circ K(u)$ is completely determined by the second term in \cref{eq: rho of K(u)}. Let us call $\pi_*^i$ for $i=1,2$ the projection on the $i-$th component. 
		It follows that an element $(Z_0\hat u_0,Z_t^*J \hat \nu,0) = \hat u \in \mathcal 
		V '$ has the same projection as $K(u)$ if and only if:
		$$\rho \circ (K(u)-\hat u) = 0 \iff 
		\begin{cases}
		 \hat u_0 &= X_0^{-1} \pi^1_*\tilde \Phi_*^{-1}\,pr_1 \tilde J_1\tilde \Phi_* (\int_0^1 \tilde Z_tZ_t^*J \nu dt)\\
		\hat \nu &= \Gamma^{-1} \left( \Omega \nu + (\pi^2_*-\pi^1_*)\tilde \Phi_*^{-1}\,pr_1 \tilde J_1\tilde \Phi_* (\int_0^1 \tilde Z_tZ_t^*J \nu dt)\right)		
	\end{cases}$$
		 In particular the restriction to $\mathcal{V}' 	\cap \{u_0=0\}$ is given by:
		\begin{equation*}
			\nu \mapsto \Gamma^{-1} \left( \Omega  + (\pi^2_*-\pi^1_*)\tilde \Phi_*^{-1}\,pr_1 \tilde J_1\tilde \Phi_* (\int_0^1 \tilde Z_tZ_t^*J  dt)\right)\nu.
		\end{equation*}
		
		A similar strategy applied to $\mathcal{V}' \cap \{\nu=0\}$ tells us that the last contribution for the trace is given by the following map:
		\begin{equation*}
			u_0 \mapsto X_0^{-1}\pi_*^1\tilde \Phi_*^{-1}\,pr_1 \tilde J_1\tilde \Phi_*\tilde Z_0u_0.
		\end{equation*}
		It is worth pointing out that indeed the trace does not depend on $X_0$ and that the vector $\int_0^1\tilde Z_t Z_tJ \nu dt $ is the following:
		\begin{equation*}
			\int_0^1\tilde Z_t Z_tJ \nu dt = \begin{pmatrix}
				\begin{pmatrix}
					0 \\0
				\end{pmatrix}\\
			\begin{pmatrix}
				\Theta \nu \\ \Gamma \nu
			\end{pmatrix}
			\end{pmatrix}
		\end{equation*} 
	In particular if the boundary conditions are separated (i.e. $N = N_0\times N_1$) the part of the trace coming from $\mathcal{V}'\cap \{u_0=0\}$ depends only on the projection onto $T_{\lambda_1}N_1$.
	\end{proof}
\end{lemma}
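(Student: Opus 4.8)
The plan is to treat the two assertions separately. For the expression of $K$, I would specialize the general integral representation of the second variation to the doubled system. Starting from \Cref{lemma: compact part second variation} (equivalently from the form in \Cref{def second variation}), I would substitute the block structure of the extended data
\[
\tilde Z_0 u_0 = \begin{pmatrix} Z_0 u_0 \\ Z_0 u_0\end{pmatrix}, \quad \tilde Z_1 u_1 = \begin{pmatrix} Z_1^0 u_1 \\ Z_1^1 u_1\end{pmatrix}, \quad \tilde Z_t u_t = \begin{pmatrix} 0 \\ Z_t u_t\end{pmatrix},
\]
recorded just before the statement. Because the dynamics on the first factor is trivial, the double integral $\int_0^1\int_0^t \sigma(\tilde Z_\tau u_\tau, \tilde Z_t u_t)\,d\tau\,dt$ collapses to the second-factor expression, and the cross terms with $\tilde Z_0, \tilde Z_1$ reduce using the isotropy of $\imm(\tilde Z_0)$ and $\imm(\tilde Z_1)$ together with the constraint $Z_0 u_0 = -Z_1^0 u_1$ available on $\mathcal V$. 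Collecting the boundary contributions and using the metric splitting $\tilde g_0 = g_0\oplus g_0$, $\tilde g_1 = g_0\oplus g_1$ (so that only the sum $g_0$ survives on $\mathcal V$) yields the displayed formula for $-\langle Ku,u\rangle$. This step is routine bookkeeping.

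For the trace, the difficulty is that $K$ as given in \Cref{lemma: compact part second variation} is not self-adjoint on all of $\mathcal H$ --- it becomes symmetric only after restricting to $\mathcal V$ --- so one cannot read off $\tr(K|_\mathcal V)$ directly from the explicit kernel. My approach would be to compute the trace on the full space, where the Volterra structure is transparent, and then subtract the contribution of the complement. Concretely, I would decompose $\mathcal H = \mathcal H_1 \oplus \mathcal H_2$ orthogonally, with $\mathcal H_1 = L^2([0,1],\mathbb{R}^k)$ the purely interior controls and $\mathcal H_2$ the endpoint block. On $\mathcal H_1$ the operator reduces to the Volterra operator $u_t \mapsto -Z_t^* J \int_0^t Z_\tau u_\tau\,d\tau$, whose trace vanishes by \cite{determinant}; on $\mathcal H_2$ one reads off $-\dim(N)$ directly from the $\sigma(\tilde Z_0 u_0, \tilde Z_1 u_1) - \|u\|^2$ form. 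Hence $\tr(K) = -\dim(N)$ on $\mathcal H$.

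To pass from $\mathcal H$ to $\mathcal V$, I would use that for an orthogonal splitting $\mathcal H = \mathcal V \oplus \mathcal V^\perp$ the trace is additive, $\tr(K) = \tr(K|_\mathcal V) + \tr(K|_{\mathcal V^\perp})$, since the cross terms $\tr(PKP')$ and $\tr(P'KP)$ vanish for orthogonal $P, P'$ (a transversal but non-orthogonal complement could also be used, but then one must check it gives the same trace). Thus $\tr(K|_\mathcal V) = -\dim(N) - \tr(K|_{\mathcal V^\perp})$, and it remains to evaluate the compression to the complement, using the description of $\mathcal V^\perp$ as vectors $(v_0, Z_t^* J\nu, v_1)$ from \Cref{orthogonal to V} (in its doubled-variable form derived in the proof of \Cref{prop: jacobi equation general case}). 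I would apply $K$ to such an element, compute its image under the endpoint functional $\rho(u) = \tilde\pi_*(\tilde Z_0 u_0 + \int_0^1 \tilde Z_t u_t\,dt + \tilde Z_1 u_1)$, and invert $\rho$ on the complement to read off the diagonal blocks. The matrix $\Gamma = \int_0^1 X_t X_t^* dt$ arises as the Jacobian of $\rho$ restricted to the complement, $\Omega = \int_0^1\int_0^t X_t Z_t^* J Z_\tau X_\tau^* d\tau\,dt$ as the iterated integral produced by the Volterra part, and the factors $pr_1$, $\tilde J_1$, $\tilde\Phi_*$ enter through the term $\Lambda(u)$. Summing the contributions from $\mathcal V^\perp \cap \{u_0 = 0\}$ and $\mathcal V^\perp \cap \{\nu = 0\}$ produces the two explicit traces in the statement.

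The main obstacle I anticipate is precisely this last computation of $\tr(K|_{\mathcal V^\perp})$: one must set up the projection onto the complement correctly, track the endpoint map $\rho$ through the non-self-adjoint operator $K$, and handle the fact that in the doubled system the controls $u_0$ and $u_1$ are no longer independent (the constraint $Z_0 u_0 = -Z_1^0 u_1$, together with the confinement of trajectories to $\{\hat q\}\times M$, makes the endpoint map non-submersive). Keeping the two copies of $T_{\lambda_0}T^*M$ with the correct sign of the symplectic form $(-\sigma)\oplus\sigma$ throughout, and verifying that only the sum metric $g_0$ survives on $\mathcal V$, is where the care is needed; the vanishing of the interior Volterra trace and the additivity of the trace over the orthogonal splitting are the two structural facts that make the scheme work.
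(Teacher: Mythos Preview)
Your plan is essentially the paper's: specialize \Cref{lemma: compact part second variation} to the doubled data for the first assertion; for the trace, split $\mathcal H=\mathcal H_1\oplus\mathcal H_2$, use the Volterra trace-zero result from \cite{determinant} on $\mathcal H_1$ and read off $-\dim(N)$ on $\mathcal H_2$, then subtract the compression to a complement of $\mathcal V$ computed via the endpoint functional $\rho$.

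The one substantive deviation is your choice of complement. You propose the orthogonal $\mathcal V^\perp$; the paper instead uses the transversal
\[
\mathcal V'=\{(u_0, Z_t^*J\nu,0):\nu\in\Pi,\ u_0\in\mathbb R^{\dim M}\},
\]
with the $u_1$-slot fixed to zero and $(u_0,\nu)$ free. Your parenthetical that a non-orthogonal complement ``could also be used, but then one must check it gives the same trace'' is exactly the point: the paper takes that shortcut because in $\mathcal V'$ the further split $\{u_0=0\}\cup\{\nu=0\}$ is clean and inverting $\rho$ is immediate, whereas in the genuine $\mathcal V^\perp$ the coordinates $(v_0,\tilde\nu,v_1)$ are coupled by \eqref{eq:orthogonal_complement_nonsep}, making the block computation you sketch considerably messier. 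One small correction to your justification: the vanishing of the cross terms $\tr(PKP')=\tr(P'PK)=0$ follows from cyclicity for \emph{any} complementary projections, not just orthogonal ones, so trace additivity over $\mathcal V\oplus W$ holds regardless; the virtue of the orthogonal choice is rather that its $\mathcal V$-diagonal block is exactly the self-adjoint compression representing the quadratic form, which is the trace you ultimately need.
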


\begin{lemma}
	\label{lemma: quantities at s=0}
	The flow $\Phi_t^s\vert_{s=0}$ and its derivative $\partial_s \Phi_t^s\vert_{s=0}$ are given by:
	\begin{equation*}
		\Phi_t ^s|_{s=0}= \begin{pmatrix}
			1 &0\\ \int_0^t X_\tau X_\tau^* d\tau &1
		\end{pmatrix}, \quad 	\partial_s\Phi_t ^s\vert_{s=0}= \begin{pmatrix}
			\int_0^t Y_\tau X_\tau^* d\tau &0\\ \int_0^t\int_0^\tau X_\tau Z_\tau ^* J Z_r X_r ^*dr d\tau & -\int_0^tX_\tau Y_\tau^* d\tau
		\end{pmatrix}
	\end{equation*}
	\begin{proof}
		It is straightforward to check that $\Phi_1^s\vert_{s=0}$ and $\partial_s \Phi_1^s\vert_{s=0}$ solve the following Cauchy problems:
		\begin{equation*}
			\begin{cases}
				\dot \Phi_t^0 = \begin{pmatrix}
					0 & 0 \\ X_t X_t^* &0
				\end{pmatrix} \Phi_t^0\\
				\Phi_0^0 = Id
			\end{cases} \quad 	\begin{cases}
				\partial_s \dot \Phi_t^s|_{s=0} = \begin{pmatrix}
					0 & 0 \\ X_t X_t^* &0
				\end{pmatrix} \partial_s \Phi_t^s|_{s=0}+ \begin{pmatrix}
					Y_tX_t^* &0\\ 0 &-X_tY_t^*
				\end{pmatrix}\Phi_t^0\\
				\partial_s \Phi_0^s|_{s=0} = 0.
			\end{cases}
		\end{equation*}
		Solving the ODE one obtains the formula in the statement.
	\end{proof}
\end{lemma}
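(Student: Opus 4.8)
The plan is to reduce the claim to writing down two linear matrix Cauchy problems, one for $\Phi^0_t$ and one for its $s$-derivative at $s=0$, and then to integrate them explicitly; the payoff comes from the strictly triangular (nilpotent) structure that the generator of the Jacobi flow acquires once one specializes at $s=0$.

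First I would make the block structure of the generator explicit. In the splitting $\Pi_0\oplus\Pi_0^\perp$ write $Z_t=\begin{pmatrix}Y_t\\ X_t\end{pmatrix}$ with $X_t=\pi_*Z_t$, so that the dilation acts by $\delta^sZ_t=\begin{pmatrix}sY_t\\ X_t\end{pmatrix}$ and, in adapted Darboux coordinates where $J=\begin{pmatrix}0&-1\\ 1&0\end{pmatrix}$, the generator of the Jacobi equation becomes
\begin{equation*}
	M_t^s:=Z_t^s(Z_t^s)^*J=\begin{pmatrix} sY_tX_t^* & -s^2Y_tY_t^* \\ X_tX_t^* & -sX_tY_t^*\end{pmatrix}.
\end{equation*}
In particular $M_t^0=\begin{pmatrix}0&0\\ X_tX_t^*&0\end{pmatrix}$ and $\partial_sM_t^s|_{s=0}=\begin{pmatrix}Y_tX_t^*&0\\ 0&-X_tY_t^*\end{pmatrix}$. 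Since $M_t^0$ is strictly lower triangular and nilpotent, the Cauchy problem $\dot\Phi^0_t=M_t^0\Phi^0_t$, $\Phi^0_0=I$, integrates at once to the stated value $\Phi^0_t=\begin{pmatrix}1&0\\ \int_0^tX_\tau X_\tau^*\,d\tau&1\end{pmatrix}$.

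Next I would differentiate the defining relation $\dot\Phi^s_t=M_t^s\Phi^s_t$ in $s$ at $s=0$. Setting $\Psi_t:=\partial_s\Phi^s_t|_{s=0}$ produces the inhomogeneous linear system $\dot\Psi_t=M_t^0\Psi_t+(\partial_sM_t^s|_{s=0})\,\Phi^0_t$ with $\Psi_0=0$, which is exactly the second Cauchy problem asserted in the statement. I would solve it by Duhamel's formula, $\Psi_t=\Phi^0_t\int_0^t(\Phi^0_\tau)^{-1}(\partial_sM_\tau^s|_{s=0})\Phi^0_\tau\,d\tau$, using $(\Phi^0_\tau)^{-1}=\begin{pmatrix}1&0\\ -\int_0^\tau X_rX_r^*\,dr&1\end{pmatrix}$. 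After the block multiplication the two diagonal entries fall out immediately as $\int_0^tY_\tau X_\tau^*\,d\tau$ and $-\int_0^tX_\tau Y_\tau^*\,d\tau$.

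The only genuine work, and what I expect to be the main obstacle, is matching the bottom-left block. Duhamel's formula leaves it in the form $\Gamma_t\Theta_t-\int_0^t(\Gamma_\tau Y_\tau X_\tau^*+X_\tau Y_\tau^*\Gamma_\tau)\,d\tau$, where $\Gamma_t=\int_0^tX_\tau X_\tau^*\,d\tau$ and $\Theta_t=\int_0^tY_\tau X_\tau^*\,d\tau$, and this must be identified with $\int_0^t\int_0^\tau X_\tau Z_\tau^*JZ_rX_r^*\,dr\,d\tau$. I would handle this by expanding $Z_\tau^*JZ_r=X_\tau^*Y_r-Y_\tau^*X_r$, so that the target double integral equals $\int_0^tX_\tau X_\tau^*\Theta_\tau\,d\tau-\int_0^tX_\tau Y_\tau^*\Gamma_\tau\,d\tau$, and then using the product rule $\frac{d}{dt}(\Gamma_t\Theta_t)=X_tX_t^*\Theta_t+\Gamma_tY_tX_t^*$ to rewrite $\Gamma_t\Theta_t$ as an integral; the $\Gamma_\tau Y_\tau X_\tau^*$ terms then cancel and the two expressions coincide. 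Everything else is routine bookkeeping of $2\times2$ block products.
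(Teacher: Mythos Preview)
Your proposal is correct and follows exactly the same route as the paper's proof: write down the Cauchy problems for $\Phi_t^0$ and for $\partial_s\Phi_t^s|_{s=0}$ coming from $\dot\Phi_t^s=Z_t^s(Z_t^s)^*J\,\Phi_t^s$, and integrate them. The paper simply asserts these two systems and says ``solving the ODE one obtains the formula''; you have carried out that integration in full, in particular the Duhamel computation and the integration-by-parts identity needed for the lower-left block, which is precisely the detail the paper elides.
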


	\appendix

\section{}
\label{appendix}
In this appendix we collect some information concerning Pontryagin Maximum Principle (PMP) and the differentiation of the endpoint map used and mentioned throughout the text. Everything is fairly standard material in geometric control theory, the reader is referred to \cite{bookcontrol,bookJean,bookSubriemannian} for further details. 
\subsection{Pontryagin Maximum Principle}
Let us introduce a useful family of Hamiltonian functions on $T^*M$. They generate a family of Hamiltonian flows which we use to backtrack admissible trajectories $\gamma$ to their initial point. Moreover, they appear in the formulation of PMP and extend the flow of the fields $f_{u(t)}$ to the cotangent bundle. Set:
\begin{equation*}
	h^t_u: T^*M \to \mathbb R, \quad h^t_u(\lambda) = \langle \lambda,f_u\rangle +\nu \varphi(u,\pi(\lambda)), \quad \nu \le 0.
\end{equation*}

In particular, if $\tilde \gamma$ is and admissible curve, we can build a lift, i.e. a curve $\tilde\lambda$ in $T^*M$ such that $\pi(\tilde\lambda) =  \tilde \gamma$, solving $\dot \lambda = \vec h_u(\lambda)$.  
The following wellknown theorem, Pontryagin Maximum Principle, gives a characterization of critical points of $\mathcal{J}$ (as defined in \eqref{eq: minimaztion problem}), for any set of boundary conditions $N$.

\begin{theorem*}[PMP]
	If a control $\tilde u\in L^\infty([0,1],U)$ is a local minimizer for the functional in \cref{eq: minimaztion problem} there exists a curve $\lambda:[0,1]\to T^*M$, $\nu \in \mathbb{R}$ and an admissible curve $q: [0,1] \to M$ such that for almost all $t\in [0,1]$
	\begin{enumerate}
		\item $\lambda(t)$ is a lift of $q(t)$: 
		$$
		q(t) = \pi (\lambda(t));
		$$
		\item $\lambda(t)$ satisfies the following Hamiltonian system:
		$$
		\frac{d \lambda}{dt} = \vec{h}_{\tilde u(t)}(\lambda);
		$$
		\item the control $\tilde u$ is determined by the maximum condition:
		$$
		h_{\tilde u(t)}(\lambda(t)) = \max_{u\in U} h_u( \lambda(t)), \quad \nu\le0;
		$$
		\item the non-triviality condition holds: $(\lambda(t),\nu)\neq (0,0)$;
		\item transversality condition holds:
		$$(-\lambda(0),\lambda(1)) \in Ann(N).$$
	\end{enumerate}
	We call $q(t)$ an extremal curve (or trajectory) and $\lambda(t)$ an extremal.
\end{theorem*}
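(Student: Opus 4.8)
The plan is to prove PMP by the classical method of \emph{needle variations} combined with a convex separation argument, adapted to the free-endpoint constraint encoded by $N$. First I would reduce the Bolza-type problem \eqref{eq: minimaztion problem} to a Mayer problem by augmenting the state: introduce an extra scalar coordinate $y$ on $\mathbb{R}\times M$ with $\dot y = \varphi(u(t),q)$ and $y(0)=0$, so that $y(1)=\mathcal{J}(\gamma_u)$. Minimizing $\mathcal{J}$ then becomes minimizing the final value of $y$, and optimality of $\tilde u$ is characterized by the fact that $(\mathcal J(\tilde\gamma),q(1))$ lies on the boundary of the attainable set of the augmented system, subject to $(q(0),q(1))\in N$. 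The Hamiltonian $h^t_u$ in the statement is precisely the symbol of the augmented control system paired against a covector whose $y$-component is the multiplier $\nu$.

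Next I would construct the cone of admissible first-order variations. Fixing a Lebesgue point $\tau\in(0,1)$ of $\tilde u$ and a value $w\in U$, I replace $\tilde u$ on $[\tau-\varepsilon,\tau]$ by the constant $w$; the standard estimate shows that the augmented endpoint is displaced by $\varepsilon\,(P_\tau)_*\big(f_w-f_{\tilde u(\tau)}\big)(q(\tau))+o(\varepsilon)$, where $P_\tau$ denotes the flow of the augmented reference system from time $\tau$ to time $1$. Letting the initial point slide along $N$ produces a second family of variations, transported forward by the same linearization. The central technical lemma is that finitely many needle variations at distinct Lebesgue points can be superimposed, so that the set $\mathcal K$ of all resulting displacement vectors at time $1$ generates a \emph{convex} cone in $T_{q(1)}(\mathbb{R}\times M)$; this convexity, which rests on a careful simultaneous-spike construction, is the geometric heart of the argument.

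I would then invoke optimality. If $\mathcal K$ contained the direction of strictly decreasing cost (the $-\partial_y$ direction in the augmented space) transversally to the endpoint tangencies, one could build an admissible competitor with smaller cost, contradicting minimality. Hence there exists a nonzero covector at time $1$, together with a scalar $\nu\le 0$, separating $\mathcal K$ from that direction. Defining $\lambda(t)$ by back-transporting this covector along the reference trajectory via the adjoint of the linearized augmented flow, a direct computation shows that $\lambda$ solves the adjoint Hamiltonian system $\dot\lambda=\vec h^t_{\tilde u(t)}(\lambda)$, yielding conditions (1)--(2), while nontriviality (4) is inherited from the separating covector being nonzero. Evaluating the separation inequality on a single needle variation with value $w$ gives $h^t_w(\lambda(t))\le h^t_{\tilde u(t)}(\lambda(t))$, and running $w$ through a countable dense subset of $U$, together with the density of Lebesgue points, upgrades this to the pointwise maximum condition (3) for almost every $t$. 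Finally, the variations moving the endpoints tangentially to $N$ lie in the kernel of the separating functional, which is exactly the transversality condition $(-\lambda(0),\lambda(1))\in Ann(N)$ of (5).

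The main obstacle I anticipate is twofold: establishing the convexity of the attainable cone $\mathcal K$ through the simultaneous needle-variation superposition (a delicate measure-theoretic argument at Lebesgue points), and passing from the single separation inequality at the endpoint to the \emph{pointwise, almost-everywhere} maximum condition, which requires the density argument over a countable dense set of control values. The endpoint manifold $N$ enters only through the transversality bookkeeping in the final step and does not affect these core difficulties.
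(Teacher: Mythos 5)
The paper does not prove this statement at all: PMP is quoted in the appendix as standard background, with the reader referred to the textbook literature (\cite{bookcontrol,bookJean,bookSubriemannian}), so there is no in-paper argument to compare yours against. Your sketch is precisely the classical proof found in those references: reduction of the Bolza problem to a Mayer problem by state augmentation, needle variations at Lebesgue points, convexity of the variational cone via simultaneous proportional spikes, separation of the cone from the cost-decreasing direction $-\partial_y$, back-transport of the separating covector by the adjoint of the linearized flow to obtain the Hamiltonian system and nontriviality, a density argument over a countable dense set of control values for the almost-everywhere maximum condition, and the kernel condition on endpoint variations for transversality. As an outline it is faithful and correct, and the two steps you flag as delicate (convexity of the cone of superimposed spikes, and upgrading a single endpoint inequality to the pointwise maximum condition) are indeed where the real work lies in a complete write-up.

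Two caveats if you carry this out in full. First, needle variations change the control by an $O(1)$ amount on a set of measure $\varepsilon$, so they are admissible competitors only when ``local minimizer'' is understood in a topology for which such perturbations are small --- $L^1$-closeness of controls or uniform closeness of trajectories (strong, or Pontryagin, local minimality). For a minimizer that is local only in the $L^\infty$ norm on controls, your argument does not yield the full maximum condition (3), but only its first- and second-order consequences $\partial_u h^t_u\vert_{u=\tilde u(t)}=0$ and the Legendre condition; note that these weaker consequences, together with transversality, are all the rest of the paper actually uses. Second, the separation must be run jointly at both endpoints: the displacements obtained by sliding $(q(0),q(1))$ along $N$ and the needle displacements transported to time $1$ have to be assembled into a single convex cone in the augmented product space before a supporting covector is extracted. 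Doing the bookkeeping there is exactly what produces the sign in condition (5), $(-\lambda(0),\lambda(1)) \in Ann(N)$, consistent with the paper's later convention of working with the symplectic form $(-\sigma)\oplus\sigma$ on $T_{\lambda_0}T^*M \times T_{\lambda_1}T^*M$; your final paragraph treats the initial-point variations somewhat as an afterthought, and this is the one place where that shortcut could produce a sign error rather than a mere omission.
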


There are essentially two possibility for the parameter $\nu$, it can be either $0$ or, after appropriate normalization of $\lambda_t$, $-1$.
The extremals belonging to the first family are called \emph{abnormal} whereas the ones belonging to second \emph{normal}.

\subsection{The Endpoint map and its differentiation}

In this subsection we write down the integral expression for the first and second derivative of the endpoint map. Further details can be found in \cite{bookcontrol}[Section 20.3]. Denote by $\mathcal{U}_{q_0}\subset L^{\infty}([0,1],U)$ be the space of admissible controls at point $q_0$ and define the following map:
\begin{equation*}
	E^t: \mathcal U_{q_0} \to M,\quad u\mapsto \gamma_{u}(t)	
\end{equation*}
It takes the control $u$ and gives the position at time $t$ of the solution starting from $q_0$ of: $$\dot{q} = f_{u(\tau)}(q).$$  We call this map \emph{Endpoint map}. It turns out that $E^t$ is smooth, provided that the fields $f_u(q)$ are smooth too.

For a fixed control $\tilde u $ consider the function $h_{\tilde{u}}(\lambda) := h_{\tilde{u}(t)}(\lambda)$ and define the following non autonomous flow which plays the role of parallel transport in this context:
\begin{equation}
	\label{eq: parallel transport}
	\frac{d}{dt} \tilde{\Phi}_t =  \vec{h}_{\tilde{u}}(\tilde{\Phi}_t) \qquad \tilde{\Phi}_0 = Id
\end{equation}

It has the following properties:
\begin{itemize}
	\item It extends to the cotangent bundle the flow which solves $\dot{q} = f^t_{\tilde{u}}(q)$ on the base. In particular if $\lambda_t$ is an extremal with initial condition $\lambda_0$, $\pi(\tilde{\Phi}_t(\lambda_0)) = q_{\tilde{u}}(t)$ where $q_{\tilde{u}}$ is an extremal trajectory.
	\item $\tilde{\Phi}_t$ preserves the fibre over each $q \in M$. The restriction $\tilde{\Phi}_t:\,  T^*_qM \to T^*_{\tilde{\Phi}_t(q)}M $ is an affine transformation.
\end{itemize}

We suppose now that $\lambda(t)$ is an extremal and $\tilde{u}$ a critical point of the functional $\mathcal{J}$.
We use the symplectomorphism $\tilde{\Phi}_t$ to pull back the whole curve $\lambda(t)$ to the starting point $\lambda_0$. We can express all the first and second order information about the extremal using the following map and its derivatives:
\begin{equation*}
	b_u^t(\lambda) = (h_u^t-h_{\tilde{u}}^t)\circ \tilde{\Phi}_t(\lambda)
\end{equation*}

Notice that:
\begin{itemize}
	\item $b_u^t(\lambda_0)\vert_{u =\tilde{u}(t)} =0 = d_{\lambda_0}\, b_u^t\vert_{u =\tilde{u}(t)}$ by definition.
	\item $\partial_u b_u^t\vert_{u =\tilde{u}(t)} = \partial_u (h_u^t\circ \tilde{\Phi}_t)\vert_{u =\tilde{u}(t)} =0$ since $\lambda(t)$ is an extremal and $\tilde{u}$ the relative control.
\end{itemize}

Thus the first non zero derivatives are the order two ones. We define the following maps:
\begin{equation}
	\label{eq: Z_t and H_t}
	\begin{split}
		Z_t  = \partial_u \vec{b}_u^t(\lambda_0)\vert_{u=\tilde{u}(t)} : \mathbb{R}^k = T_{\tilde{u}(t)}U \to T_{\lambda_0}(T^*M) \\
		H_t = \partial_u^2 b_t(\lambda_0)\vert_{u=\tilde{u}(t)} :  \mathbb{R}^k =T_{\tilde{u}(t)}U \to  T^*_{\tilde{u}(t)}U =\mathbb{R}^k
	\end{split}
\end{equation}

We denote by $\Pi=\ker \pi_*$ the kernel of the differential of the natural projection $\pi: T^*M \to M$.
\begin{prop}[Differential of the endpoint map]
	\label{prop: differential end point map}
	Consider the endpoint map $E^t: \mathcal{U}_{q_0} \to M$. Fix a point $\tilde{u}$ and consider the symplectomorphism $\tilde{\Phi}_t$ and the map $Z_t$ defined above. The differential is the following map:
	\begin{equation*}
		d_{\tilde{u}} E (v_t) =d_{\lambda(t)} \pi \circ d_{\lambda_0}\tilde{\Phi}_t(\int_0^t Z_\tau v_\tau d\tau) \in T_{q_t}M
	\end{equation*}
\end{prop}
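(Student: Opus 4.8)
The plan is to lift the computation to the cotangent bundle, where the base dynamics $\dot q = f_{u(t)}(q)$ appears as the projection of a Hamiltonian flow. Since $h_u^t(\lambda)=\langle\lambda,f_u\rangle+\nu\varphi(u,\pi(\lambda))$ is affine in the fibre with linear part $f_u$, Hamilton's equations give $\pi_*\vec h_u^t(\lambda)=f_u(\pi(\lambda))$. Hence, denoting by $\Phi_t^u$ the non-autonomous Hamiltonian flow of $\vec h_{u(t)}^t$ and taking any lift $\lambda_0$ with $\pi(\lambda_0)=q_0$, the projected curve $t\mapsto\pi(\Phi_t^u(\lambda_0))$ solves $\dot q=f_{u(t)}(q)$ from $q_0$, so that $E^t(u)=\pi(\Phi_t^u(\lambda_0))$. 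It therefore suffices to differentiate $u\mapsto\Phi_t^u(\lambda_0)$ and postcompose with $d_{\lambda(t)}\pi$, where $\lambda(t)=\Phi_t^{\tilde u}(\lambda_0)=\tilde\Phi_t(\lambda_0)$.

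The main step is an interaction-picture factorization of the perturbed flow through the reference flow $\tilde\Phi_t$. The crucial algebraic input is that, $\tilde\Phi_\tau$ being a symplectomorphism, it intertwines Hamiltonian vector fields, $\tilde\Phi_\tau^*\,\vec{(h_u^\tau-h_{\tilde u}^\tau)}=\vec{b}_u^\tau$, with $b_u^\tau=(h_u^\tau-h_{\tilde u}^\tau)\circ\tilde\Phi_\tau$ exactly the generator from \eqref{eq: Z_t and H_t}. The composition-of-flows formula of chronological calculus (\cite{bookcontrol}[Section 20.3]) then yields
\[
\Phi_t^{u}=\tilde\Phi_t\circ\overrightarrow{\exp}\int_0^t\vec{b}_{u(\tau)}^\tau\,d\tau .
\]
Writing $u=\tilde u+\epsilon v$ and differentiating at $\epsilon=0$: by the properties of $b_u^t$ recorded before \eqref{eq: Z_t and H_t} the generator $\vec{b}_{\tilde u(\tau)}^\tau$ vanishes identically, so the inner exponential is the identity and fixes $\lambda_0$. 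Only the first-order Volterra term survives, and evaluated at the fixed point $\lambda_0$ it reads $\int_0^t\partial_u\vec{b}_u^\tau(\lambda_0)\big|_{u=\tilde u(\tau)}v_\tau\,d\tau=\int_0^t Z_\tau v_\tau\,d\tau$. Applying $d_{\lambda_0}\tilde\Phi_t$ to the outer factor and then $d_{\lambda(t)}\pi$ gives the claimed expression.

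The main obstacle is the rigorous justification of this factorization and of the termwise differentiation of the chronological exponential for merely $L^\infty$ controls (convergence of the Volterra series and interchange of $\partial_\epsilon$ with the time-ordered integral). I would either invoke the standard estimates of chronological calculus directly, or bypass the exponential notation altogether by an elementary Duhamel argument. Namely, set $\delta\lambda(t)=\partial_\epsilon\Phi_t^{\tilde u+\epsilon v}(\lambda_0)\big|_{\epsilon=0}$, which solves the linearised equation $\dot{\delta\lambda}=d_{\lambda(t)}\vec h_{\tilde u(t)}^t\,\delta\lambda+\partial_u\vec h_u^t(\lambda(t))\big|_{u=\tilde u(t)}v_t$ with $\delta\lambda(0)=0$, and put $w(t)=(d_{\lambda_0}\tilde\Phi_t)^{-1}\delta\lambda(t)$. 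Differentiating $\delta\lambda(t)=d_{\lambda_0}\tilde\Phi_t\,w(t)$ and using the variational equation of the reference flow cancels the $d\vec h_{\tilde u}^t$ terms, leaving $\dot w=(d_{\lambda_0}\tilde\Phi_t)^{-1}\partial_u\vec h_u^t(\lambda(t))v_t$; the symplectic intertwining identity identifies this last coefficient with $Z_t=\partial_u\vec{b}_u^t(\lambda_0)$, so $\dot w=Z_t v_t$, $w(0)=0$. Integrating gives $w(t)=\int_0^t Z_\tau v_\tau\,d\tau$, hence $\delta\lambda(t)=d_{\lambda_0}\tilde\Phi_t\int_0^t Z_\tau v_\tau\,d\tau$, and projecting by $d_{\lambda(t)}\pi$ reproduces the stated formula.
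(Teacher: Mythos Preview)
The paper does not actually prove this proposition: it is stated in the appendix as standard material and the reader is referred to \cite{bookcontrol}[Section 20.3] for the derivation via chronological calculus. Your first argument is precisely the chronological-calculus computation that reference contains (the factorization $\Phi_t^u=\tilde\Phi_t\circ\overrightarrow{\exp}\int_0^t\vec b_{u(\tau)}^\tau\,d\tau$ and first-order expansion at $\tilde u$), so your proposal is correct and in line with what the paper invokes. Your alternative Duhamel/variation-of-constants argument is also correct and has the advantage of avoiding any appeal to convergence of the Volterra series; it is a cleaner self-contained route than the one the paper's reference takes, though it proves the same identity by the same underlying mechanism (pulling back through $\tilde\Phi_t$ to kill the homogeneous part).
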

In particular, if we identify $T_{\lambda_0}(T^* M)$ with $\mathbb{R}^{2m}$ and write $Z_t = \begin{pmatrix}
	Y_t \\ X_t
\end{pmatrix}$, $\tilde{u}$ is a regular point if and only if $v_t\mapsto \int_0^t X_\tau v_\tau d\tau$ is surjective. Equivalently if the following matrix is invertible:
\begin{equation*}
	\Gamma_t = \int_0^t X_\tau X^*_\tau d\tau \in Mat_{n\times n}(\mathbb{R}), \quad \det(\Gamma_t)\ne 0
\end{equation*}

If $d_{\tilde{u}} E^t$ is surjective then $(E^t)^{-1}(q_t)$ is smooth in a neighbourhood of $\tilde{u}$ and is tangent space is given by:
\begin{equation*}
	\begin{split}
		T_{\tilde{u}}(E^t)^{-1}(q_t) = \{v \in L^\infty([0,1],\mathbb{R}^k) :\, \int_0^t X_\tau v_\tau d \tau =0\} \\
		= \{v \in L^\infty([0,1],\mathbb{R}^k) :\, \int_0^t Z_\tau v_\tau d \tau \in \Pi\}
	\end{split}
\end{equation*}

If the differential of the endpoint map is surjective, the set of admissible control becomes smooth (at least locally) and our minimization problem can be interpreted as a constrained optimization problem. We are looking for critical points of $\mathcal{J}$ on the submanifold $\{u \in \mathcal{U} : E^t(u) = q_1\}$. 
\begin{dfn}
	\label{def: strictly normal extremal}
	We say that a normal extremal $\lambda(t)$ with associated  control $\tilde{u}(t)$  is strictly normal if the differential of the endpoint map at $\tilde{u}$ is surjective. 
\end{dfn}

It makes sense to go on and consider higher order optimality conditions. At critical points is well defined (i.e. independent of coordinates) the Hessian of $\mathcal J$ (or the \emph{second variation}). 
Using chronological calculus (see again \cite{bookcontrol} or \cite{ASZ}) it is possible to write the second variation of $\mathcal{J}$ on $\ker dE^t \subseteq L^{\infty}([0,1],\mathbb{R}^k)$.
\begin{prop}[Second variation]
	\label{prop: second variation}
	Suppose that $(\lambda(t),\tilde{u})$ is a strictly normal extremal, i.e. a critical point of $\mathcal{J}$ for fixed initial and final point. For any $u \in L^{\infty}([0,1],\mathbb{R}^k)$ such that $\int_0^1X_tu_t dt  =0$ the second variation of $\mathcal J$ has the following expression:
	\begin{equation*}
		d^2_{\tilde{u}}\mathcal{J}(u) = -\int_0^1 \langle H_tu_t, u_t\rangle dt - \int_0^1\int_0^t \sigma ( Z_\tau u_\tau ,Z_t u_t ) d\tau dt
	\end{equation*}
	The associated bilinear form is symmetric provided that $u,v$ lie in a subspace that projects to a Lagrangian one via the map $u \mapsto \int_0^1 Z_t u_t dt$.		
	\begin{equation*}
		d^2_{\tilde{u}}\mathcal{J}(u,v) = -\int_0^1\langle H_tu_t, v_t\rangle dt - \int_0^1\int_0^t \sigma ( Z_\tau u_\tau ,Z_t v_t ) d\tau dt
	\end{equation*} 
\end{prop}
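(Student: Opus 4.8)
The plan is to derive the formula by the chronological-calculus expansion of the lifted flow, following the scheme of \cite{bookcontrol}[Section 20.3]. Since $\tilde u$ is a \emph{strictly normal} critical point for fixed endpoints, the differential $d_{\tilde u}E$ is surjective and the Lagrange multiplier rule applies: there is a terminal covector (the PMP lift $\lambda(1)$) for which the first variation of $\mathcal{J}$ along $\ker d_{\tilde u}E$ vanishes, and the second variation of the constrained problem coincides with the second variation of the associated Lagrangian, whose critical flow is generated on $T^*M$ by the family $h^t_u$. Accordingly, first I would replace the base dynamics by the Hamiltonian lift $\dot\lambda=\vec h^t_u(\lambda)$ and track the cost through the $-\varphi$ term encoded in $h^t_u$, so that all the information about $\mathcal{J}$ is packaged in the flow of $\vec h^t_u$ on the cotangent bundle.

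Second, I would pass to the interaction picture by conjugating with the reference flow $\tilde\Phi_t$ of $\vec h^t_{\tilde u}$. Writing the perturbed flow as $\tilde\Phi_t\circ\overrightarrow{\exp}\int_0^t \vec b^\tau_{u(\tau)}\,d\tau$, where $b^t_u=(h^t_u-h^t_{\tilde u})\circ\tilde\Phi_t$, reduces everything to the field $\vec b^t_u$ on $T_{\lambda_0}(T^*M)$. The crucial input is the list of vanishing derivatives recorded before \eqref{eq: Z_t and H_t}: $b^t_u(\lambda_0)$, $d_{\lambda_0}b^t_u$ and $\partial_u b^t_u$ all vanish at $u=\tilde u(t)$. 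Hence $\vec b^t_u$ vanishes to first order along the reference, and in the Volterra (chronological) series for the perturbed flow the genuinely first-order contribution drops out --- consistent with $\tilde u$ being critical --- while the leading surviving term is quadratic in the control variation $u$ and built entirely from the second derivatives of the scalar $b^t_u$: the mixed derivative giving $Z_t=\partial_u\vec b^t_u(\lambda_0)$ and the pure control derivative giving $H_t=\partial_u^2 b^t_u(\lambda_0)$.

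Third, I would collect the two second-order contributions. The diagonal term $\partial_u^2 b^t_u=H_t$ integrates to $-\int_0^1\langle H_t u_t,u_t\rangle\,dt$. The nonlocal term comes from the second (nested) term of the iterated chronological integral, where two first-order perturbations $Z_\tau u_\tau$ and $Z_t u_t$ interact through the symplectic structure of $T_{\lambda_0}(T^*M)$ via the Poisson bracket of their linear approximations; ordering by time $\tau<t$ yields exactly $-\int_0^1\!\int_0^t \sigma(Z_\tau u_\tau, Z_t u_t)\,d\tau\,dt$. Evaluating the resulting covector at the terminal fibre and imposing the endpoint constraint $\int_0^1 X_t u_t\,dt=0$, which kills the remaining boundary contribution of the expansion, gives the claimed expression for $d^2_{\tilde u}\mathcal{J}(u)$.

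Finally, for the bilinear form I would polarize and compute the antisymmetric part of the double integral directly. Setting $B(u,v)=\int_0^1\!\int_0^t\sigma(Z_\tau u_\tau,Z_t v_t)\,d\tau\,dt$ and splitting the square $[0,1]^2$ into $\{\tau<t\}$ and $\{\tau>t\}$, the antisymmetry of $\sigma$ gives
\[
B(u,v)-B(v,u)=\sigma\!\left(\int_0^1 Z_\tau u_\tau\,d\tau,\ \int_0^1 Z_t v_t\,dt\right).
\]
Since the $H_t$ term is manifestly symmetric, the form is symmetric precisely when the images $\int_0^1 Z_t u_t\,dt$ and $\int_0^1 Z_t v_t\,dt$ are $\sigma$-orthogonal, which holds whenever $u,v$ lie in a subspace projecting to an isotropic (Lagrangian) subspace under $u\mapsto\int_0^1 Z_t u_t\,dt$. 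I expect the main obstacle to be the rigorous justification of the second-order chronological expansion --- controlling the remainder of the Volterra series, verifying the vanishing of the first-order term using criticality together with the constraint, and keeping the signs straight when transporting the terminal covector back to $\lambda_0$; the symmetry computation above is by contrast routine.
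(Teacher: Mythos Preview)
Your proposal is correct and follows exactly the approach the paper points to: the paper does not supply its own proof of this proposition but refers the reader to the chronological-calculus computation in \cite{bookcontrol}[Section~20.3] and \cite{ASZ}, which is precisely the scheme you outline (interaction picture via $\tilde\Phi_t$, vanishing of the first-order terms of $b^t_u$, and the Volterra expansion yielding the $H_t$ and $\sigma$ pieces). Your symmetry computation for the bilinear form is also the standard one and is correct as written.
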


Through out the paper we make the assumption, which is customarily called \emph{strong Legendre condition}, that the matrix $H_t$ is strictly negative definite and has uniformly bounded inverse. This guarantees that the term:
\begin{equation*}
	\int_0^1-\langle H_tu_t,v_t\rangle dt
\end{equation*}
is equivalent to the $L^2$ scalar product.

	\bibliographystyle{plain}
	\bibliography{ref}
	\section*{Acknowledgements}
    I would like to thank Prof. Andrei Agrachev for the stimulating discussions during the preparation of this work and Ivan Beschastnyi for helpful comments on preliminary versions of this article. I would also wish to warmly thank the anonymous reviewers for the many helpful suggestions which improved greatly the manuscript.

	 	\section*{Declarations of interest} None
	 \section*{Funding} This research did not receive any specific grant from funding agencies in the public, commercial, or
	 not-for-profit sectors.
	 
\end{document}